 \newcommand {\Star} {{\mathrm Star}}
 \newcommand {\conv} {{\mathrm conv}}
 \newcommand {\C} {{\mathbb C}}
 \newcommand {\R} {{\mathbb R}}
 \newcommand {\Z} {{\mathbb Z}}
 \newcommand {\Q} {{\mathbb Q}}
\newcommand{\dt} {{\bullet}}
\newcommand {\dd} {{\partial}}
\newcommand {\db} {{\bar \partial}}
\newcommand {\F} {{\mathcal F}}
\newcommand {\OO} {{\mathcal O}}
\newcommand {\X}  {{\mathcal X}}
\newcommand {\PP} {\mathbb{P}}
\newcommand {\HH} {\mathbb{H}}
\newcommand {\cI} {\mathcal{I}}
\DeclareMathOperator*{\im}{im}
\DeclareMathOperator*{\coker}{coker}
\newtheorem{thm}[subsection]{Theorem}
 \newtheorem{cor}[subsection]{Corollary}
 \newtheorem{lemma}[subsection]{Lemma}
 \newtheorem{prop}[subsection]{Proposition}
 \newtheorem{rmk}[subsection]{Remark}
 \newtheorem{ex}[subsection]{Example}
 \newtheorem{conj}[subsection]{Conjecture}
\newtheorem {quest}[subsection]{Question}
\begin{document}
\title[Weights on cohomology]{ Weights on cohomology,  invariants of singularities, and dual complexes}
\author{Donu Arapura}
\address{D. A. \& J. W.: Department of Mathematics\\
   Purdue University\\
   West Lafayette, IN 47907\\
   U.S.A.}
\author{Parsa Bakhtary}
\address{P. B.: Department of Mathematics and Statistics\\
   King Fahd University of Petroleum and Minerals\\
   Dhahran, Saudi Arabia 31261}
\author{Jaros{\l}aw W{\l}odarczyk}
\thanks{The first author is partially supported by the NSF,  the third  supported
  in part by an NSF grant and a Polish KBN grant}
 
\date{\today}
\maketitle

\begin{abstract}
  We study the  weight filtration on the cohomology
  of a proper complex algebraic variety and  obtain  natural upper
  bounds on its size, when it is the exceptional divisor of a
  singularity. We also give bounds for the cohomology of links.
  The  invariants of singularities introduced  here gives rather  
strong information about the topology of 
rational and related singularities.
\end{abstract}

Given a divisor on a variety,  the  combinatorics governing  the way the
components intersect is encoded by the associated dual complex. This  is the  simplicial complex
  with $p$-simplices corresponding  to $(p+1)$-fold
  intersections of components of the divisor.
Kontsevich and Soibelman \cite[A.4]{ks} and 
Stepanov \cite{step2} had  independently observed that the homotopy type of the
 dual complex of a simple normal crossing exceptional divisor
associated to a resolution of an  isolated singularity is an invariant
for the singularity. In fact,  \cite{ks},  and later Thuillier \cite{th} and 
Payne \cite{payne} have obtained homotopy  invariance results for more general dual complexes,
such as those arising from boundary divisors.  
In characteristic zero, all these results are  consequences of  the weak factorization theorem of W\l
odarczyk [Wlo], and Abramovich-Karu-Matsuki-W\l odarczyk
[AKMW]; Thuillier uses
rather different methods based on  Berkovich's  non-Archimedean analytic geometry.
As we show here, a slight refinement of factorization (theorems \ref{fact}, \ref{fact2}) and of these
techniques  yields some generalizations   this statement.
This applies to divisors of resolutions of arbitrary not
 necessarily isolated singularities, and even in a more general
 context  (discussed in the final section).
We also allow dual complexes associated with nondivisorial  varieties,
such as fibres of resolutions of nonisolated singularites. 
Here is a slightly imprecise formulation of theorems \ref{8} and \ref{10}.

\begin{thm}\label{thm:intro}
Suppose that  $X$ is a smooth complete variety. Let
$E\subset X$ be a  divisor with simple normal crossings, or more generally
a union of smooth subvarieties  which is local analytically a union of
intersections of coordinate hyperplanes.
 Then the homotopy type of the
dual complex of $E$  depends only the  the
complement $X-E$, and in fact only on its proper birational class.
\end{thm}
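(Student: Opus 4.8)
The plan is to reduce the statement, via the refined weak factorization theorems \ref{fact} and \ref{fact2}, to the case of a single blow-up, and then to prove homotopy invariance of the dual complex under such a blow-up by a local computation. Write $\Delta(E)$ for the dual complex of $E$. Suppose first that $(X_1,E_1)$ and $(X_2,E_2)$ are pairs as in the statement with $X_1-E_1\cong X_2-E_2$. Then the induced birational map $X_1\dashrightarrow X_2$ is an isomorphism over the common open set $U=X_1-E_1$, and \ref{fact}/\ref{fact2} produce a chain
\[
(X_1,E_1)=(Y_0,D_0)\ \dashrightarrow\ (Y_1,D_1)\ \dashrightarrow\ \cdots\ \dashrightarrow\ (Y_m,D_m)=(X_2,E_2)
\]
of pairs in which each $Y_i$ is smooth and complete, each $D_i$ is of the allowed combinatorial type, each complement $Y_i-D_i$ is identified with $U$, and each arrow is a blow-up or the inverse of a blow-up with smooth connected center contained in the boundary and compatible with its local coordinate-hyperplane description. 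The sharper assertion, that $\Delta(E)$ depends only on the proper birational class of $X-E$, follows by the same argument after passing to a common dominating model; the point is that \ref{fact}/\ref{fact2} can be arranged to be an isomorphism over the appropriate common open set. So it suffices to show: for $\pi\colon Y\to X$ the blow-up of such a center $Z\subset E$, one has $\Delta(\pi^{-1}(E)_{\mathrm{red}})\simeq\Delta(E)$.

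This is a local assertion, since the stratification of $\pi^{-1}(E)_{\mathrm{red}}$ — and hence its dual complex — is determined stratum by stratum from the local pictures of $E$ and of $Z$, which by hypothesis are unions of coordinate subspaces with $Z$ itself a coordinate subspace. Analyzing how the exceptional divisor $E_0$ meets the strict transforms of the components of $E$ (it meets exactly those strict transforms whose components meet $Z$, and the intersection pattern among the strict transforms mimics the blow-up of the origin in affine space), one obtains an explicit description of the passage from $\Delta(E)$ to $\Delta(\pi^{-1}(E)_{\mathrm{red}})$. When $Z$ is a connected component of an intersection of components of $E$ this passage is a stellar subdivision, hence a homeomorphism of geometric realizations; in general it is obtained from a sequence of such subdivisions together with attachments of cones over certain contractible subcomplexes. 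Each such step is a homotopy equivalence — an attachment $K\cup_A CA$ with $A\hookrightarrow K$ the inclusion of a contractible subcomplex is the mapping cone of a cofibration with contractible source, so $K\cup_A CA\simeq K/A\simeq K$ — and therefore $\Delta(\pi^{-1}(E)_{\mathrm{red}})\simeq\Delta(E)$. Equivalently, one may construct a natural simplicial map $\Delta(\pi^{-1}(E)_{\mathrm{red}})\to\Delta(E)$ collapsing the newly created part and check directly that it is a homotopy equivalence. Chaining these equivalences along the factorization gives $\Delta(E_1)\simeq\Delta(E_2)$.

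The main obstacle is not the homotopy theory but the bookkeeping demanded of the factorization. One needs \ref{fact}/\ref{fact2} in a form that (i) keeps the factorization an isomorphism over the fixed complement, (ii) keeps every intermediate boundary in the allowed combinatorial class, and, most importantly, (iii) uses centers compatible with the coordinate-hyperplane structure rather than merely transverse to the boundary — without (iii) the induced modification of the dual complex can fail to be of the controlled type above. A further difficulty, genuinely new relative to the classical simple normal crossing case, is the passage to boundaries that are unions of smooth subvarieties of higher codimension: one must first fix the correct definition of the dual complex there (the nerve of the cover by components, with simplices the connected components of the multiple intersections) and then redo the local model, in which blowing up inside a high-codimension component creates a new divisorial stratum interacting with the non-divisorial strata, so that the combinatorial move — though still a homotopy equivalence — is more delicate to pin down. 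These are precisely the places where the "slight refinement" of factorization and of the known techniques is required.
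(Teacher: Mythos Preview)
Your outline for the simple normal crossing divisor case is correct and essentially matches the paper's proof of Theorem~\ref{8}: reduce via the refined weak factorization (Theorem~\ref{fact2}) to a single blow-up with smooth center having SNC with the boundary, then analyze the three cases (center not in $D$; center equal to an intersection stratum, giving a stellar subdivision; center strictly contained in a minimal intersection stratum, giving a cone attachment). Your homotopy argument for the third case---that $\Sigma_{D'}=\Sigma_D\cup v_C*\Sigma_{D,C}$ with $\Sigma_{D,C}$ contractible---is valid, but you do not justify the contractibility; the paper's observation (*), that every maximal simplex of $\Sigma_{D,C}$ contains $\Delta_{D,C}$, is what makes $\Sigma_{D,C}$ star-shaped about the barycenter $v_{D,C}$. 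The paper phrases the same fact as an explicit simplicial retraction $v_C\mapsto v_{D,C}$. Also, your worry (iii) that factorization must produce centers that are themselves coordinate subspaces is unnecessary: Theorem~\ref{fact2} only guarantees centers with SNC relative to the boundary, and Case~3 is precisely what handles such centers.

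Where your route genuinely diverges from the paper is the higher-codimension case. You propose to redo the blow-up analysis and the factorization directly for boundaries $E$ that are unions of smooth subvarieties of arbitrary codimension. The paper does \emph{not} do this. Instead, the Lemma following Theorem~\ref{10} shows that for any such $E$ there is a specific, explicitly constructed sequence of blow-ups---always at a smallest-dimensional intersection component of the strict transforms---turning $E$ into an honest SNC divisor $D$ with $\Sigma_D\simeq\Sigma_E$; the homotopy equivalence is produced step by step via barycentric subdivisions and linear collapses. Theorem~\ref{10} then follows immediately from Theorem~\ref{8} applied to the resulting divisorial boundaries. This avoids both a higher-codimension factorization statement and any analysis of blow-ups at centers that are merely ``compatible'' with a non-divisorial boundary. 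Your approach could in principle be made to work, but it is more laborious and is not what the paper does; the paper's reduction lemma is the substantive new ingredient in that case.
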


This theorem and  related ones  in the final section were inspired by
the work of  Payne \cite[\S 5]{payne},  Stepanov \cite{step2},  and Thuillier \cite{th} 
mentioned above (and  in  hindsight also by \cite{ks}, although we
were unaware of this paper at the time these results were completed).

To  put the remaining results in context, we note that
the present paper can be considered as the extended version of our earlier
preprint \cite{abw}, where  we gave the bounds on the cohomology of
the dual  complex of singularities (not necessarily isolated). As we
will explain below, these  results are established in a more  refined
form in the present paper.  To explain the motivation, we recall the
following from the end of \cite{step2}:

 \begin{quest} [Stepanov]\label{conj:1} Is the dual complex   associated to the
   exceptional divisor of  a good resolution of an isolated
rational singularity  contractible?
\end{quest}

This  seems to have been motivated by a result of Artin \cite{artin}, that
the exceptional divisor of a resolution of a rational surface
singularity consists of a tree of rational curves. However, it appears
to be somewhat  overoptimistic.
Payne \cite[ex. 8.3]{payne} has found  a counterexample, where the dual complex
is homeomorphic to $\R\PP^2$.  Nevertheless, a weaker form of this
question have a positive answer. Namely that the higher Betti numbers of the  dual
complex associated to a resolution of an isolated rational
vanish. As we have recently learned, this was first observed by Ishii
\cite[prop 3.2]{ishii} two decades  ago. Here we give a
 stronger statement and place it in a more general context. The key step is to study  this problem from the much more
general perspective of Deligne's  weight filtration \cite{deligne}.  The point is
that for   a simple
 normal crossing divisor D,
  the weight zero  part exactly coincides with the cohomology  of the
  dual complex $\Sigma_D$:

 \begin{lemma}[Deligne]  $W_0(H^i(D,\C))=H^i(\Sigma_D, \C)$
\end{lemma}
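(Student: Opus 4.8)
The plan is to realize both sides of the asserted identity as pieces of a single spectral sequence, namely Deligne's weight spectral sequence for the simple normal crossing variety $D$. Write $D=\bigcup_{i\in S}D_i$ for the decomposition into (smooth) irreducible components, set $D_I=\bigcap_{i\in I}D_i$ for $\emptyset\ne I\subseteq S$, and let $a_p\colon D^{(p)}:=\coprod_{|I|=p+1}D_I\to D$ be the canonical map; by the normal crossing hypothesis each $D^{(p)}$ is a smooth projective variety, and $D^{(p)}=\emptyset$ for $p$ large. The alternating sums of restriction maps assemble into an exact sequence of sheaves on $D$,
\[
0\to \C_D\to a_{0*}\C_{D^{(0)}}\to a_{1*}\C_{D^{(1)}}\to a_{2*}\C_{D^{(2)}}\to\cdots,
\]
and its hypercohomology spectral sequence (using that each $a_p$ is finite, so $a_{p*}$ is exact) has the form
\[
E_1^{p,q}=H^q(D^{(p)},\C)\ \Longrightarrow\ H^{p+q}(D,\C),
\]
with $d_1$ the alternating sum of restriction maps $H^q(D^{(p)})\to H^q(D^{(p+1)})$. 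This is the weight spectral sequence: $E_1^{p,q}=H^q(D^{(p)},\C)$ carries the pure Hodge structure of weight $q$ of the smooth projective variety $D^{(p)}$, the $d_1$ are morphisms of Hodge structures, and Deligne's theorem \cite{deligne} asserts that the sequence degenerates at $E_2$, so that $\mathrm{gr}^W_q H^{p+q}(D,\C)\cong E_2^{p,q}$.

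Granting this, the weight-zero part is simply the bottom row. Because $D$ is complete, $H^i(D,\C)$ is an effective mixed Hodge structure, so $W_{-1}H^i(D,\C)=0$ and hence $W_0H^i(D,\C)=\mathrm{gr}^W_0H^i(D,\C)=E_2^{i,0}$, i.e.\ the degree-$i$ cohomology of the complex
\[
H^0(D^{(0)},\C)\xrightarrow{\,d_1\,}H^0(D^{(1)},\C)\xrightarrow{\,d_1\,}H^0(D^{(2)},\C)\to\cdots .
\]
So the remaining task is to identify this complex with the simplicial cochain complex of $\Sigma_D$. By construction a $p$-simplex of $\Sigma_D$ is a connected component of some $(p+1)$-fold intersection $D_I$, so
\[
C^p(\Sigma_D,\C)=\bigoplus_{|I|=p+1}H^0(D_I,\C)=H^0(D^{(p)},\C),
\]
and the simplicial coboundary $\delta$, being the alternating sum over deletions of a vertex from a simplex --- equivalently, over the inclusions $D_J\subseteq D_{J\setminus\{i\}}$ --- is precisely $d_1$. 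Hence the complex above is $C^\bullet(\Sigma_D,\C)$, and $W_0H^i(D,\C)=H^i(\Sigma_D,\C)$.

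The only substantive input is Deligne's $E_2$-degeneration together with the fact that this Mayer--Vietoris spectral sequence computes the weight filtration; I would simply cite \cite{deligne} for these. Everything else is bookkeeping, and the one place to be careful is matching conventions: the sign rule in the \v{C}ech/Mayer--Vietoris differential must be lined up with that of the simplicial coboundary so that the two complexes agree on the nose rather than merely up to isomorphism, and one must use the convention in which the $p$-simplices of $\Sigma_D$ correspond to \emph{connected components} of the $(p+1)$-fold intersections. This latter point is what makes the identification $C^p(\Sigma_D,\C)=H^0(D^{(p)},\C)$ literally true (and it is forced, once one wants the lemma as stated in the presence of possibly reducible intersections); with it, the argument is complete.
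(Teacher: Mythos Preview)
Your argument is correct and is essentially the one the paper gives: the paper sets up the Mayer--Vietoris/descent spectral sequence for the canonical simplicial resolution of a simple normal crossing divisor (so $E_1^{pq}=H^q(D^{(p)})$), invokes Deligne's $E_2$-degeneration (their Theorem~\ref{thm:degen}), and then reads off $W_0H^i(D)=E_2^{i,0}$ as the cohomology of $H^0(D^{(\bullet)})$, which they identify with $H^i(|\pi_0(D_\bullet)|)=H^i(\Sigma_D)$ (their Corollary~\ref{inclusion} and the remark following it). The only cosmetic difference is that the paper phrases the identification via the functor $\pi_0$ applied to the simplicial resolution rather than writing down $C^\bullet(\Sigma_D)$ directly, but this is exactly your bookkeeping step.
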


 Because of this, $W_0$ was
referred to as the ``combinatorial part'' in \cite{abw}.
The inclusion in the lemma is induced by the map collapsing the components of $D$ to the
vertices of $\Sigma_D$ (figure 1). This is explained in more detail in
section 2.

\begin{center}
\begin{figure}[ht]
    \centering
      \includegraphics[height=2.5in]{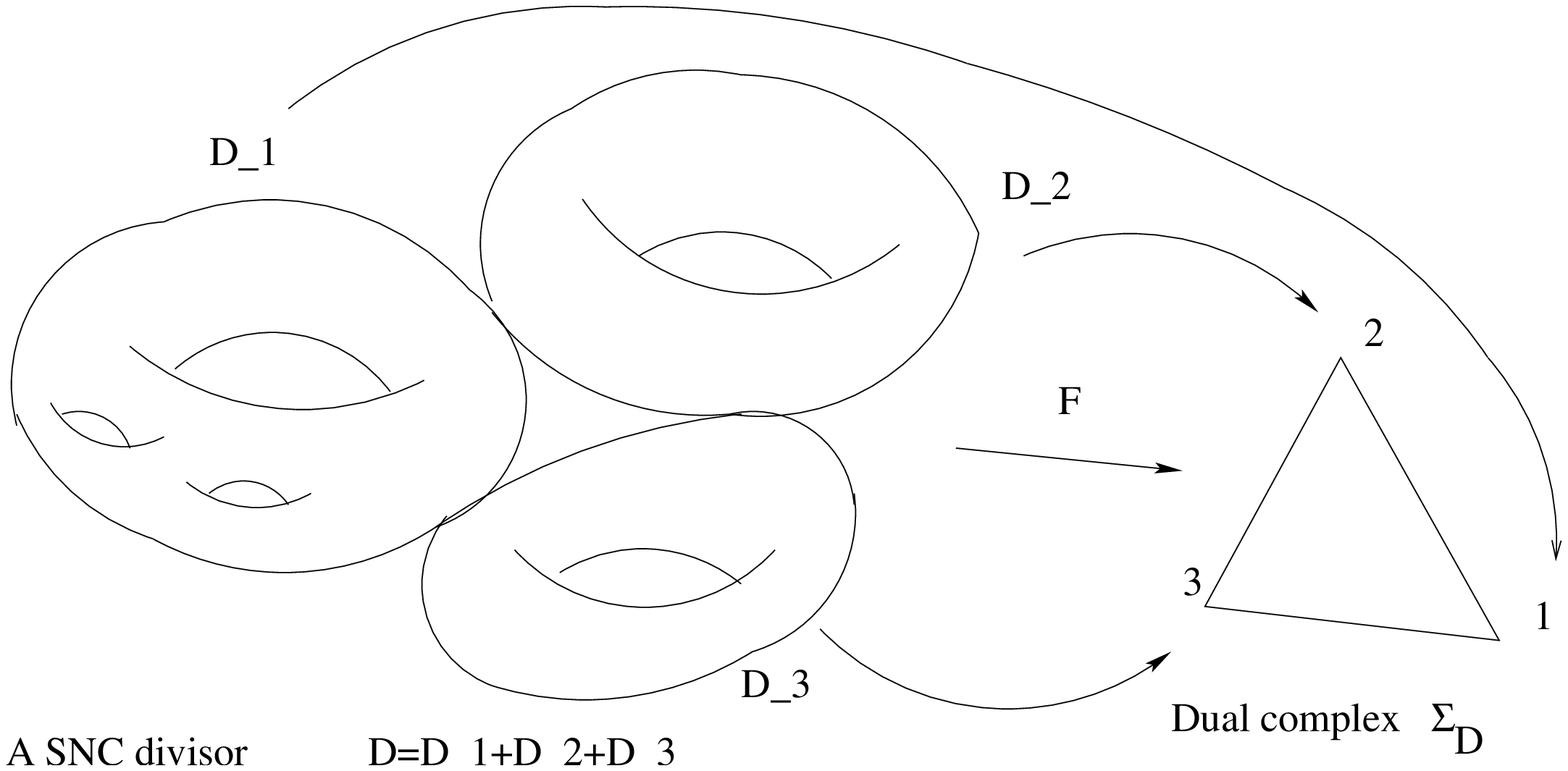}
 \caption{}
  \end{figure}
\end{center}

The above result can be applied to the exceptional fibre of any
resolution $f:X\to Y$ of any (not necessarily isolated) singularity, leading to the invariant
$$\dim(W_0(H^i(f^{-1}(y),\C))=\dim(H^i(\Sigma_{f^{-1}(y)}, \C))$$
More generally, this suggests the study of $w^i_j(y)=\dim(W_j(H^i(f^{-1}(y)))$ and also the
weight spaces of the intersection cohomology of the link.  This is the
principal goal of our paper. 
In precise terms,
given a proper map of varieties $f:X\to Y$, we give a bound on $\dim
W_jH^i(f^{-1}(y))$ in terms of the sum of $(i-p)$th cohomology of $p$-forms along the
 fibres for $p\le j$.  More concrete results can be extracted for appropriate
classes of singularities. Specializing to $w^i_0(y)$, we
find that it is bounded above by $\dim (R^if_*\OO_X)_y\otimes
\OO_y/m_y$. This refines \cite[prop 3.2]{ishii}.
The number $w_0^i(y)$ also vanishes for $0<i<\dim Y-1$ for a resolution of an
isolated normal Cohen-Macaulay singularity $(Y,y)$.
This was initially observed in \cite{abw}. Payne \cite{payne} has given this
an interesting reinterpretation as saying that  the rational homotopy type of
dual complex  of a Cohen-Macaulay singularity
is wedge of spheres. When $(Y,y)$ is toroidal, we show that
$w_j^i(y)=0$ for $j<i/2$ and $i>0$.
We have similar bounds on  $\dim W_jIH^i(L)$, where $L$
is the link of $y$. These are reduced to the previous results by using
the decomposition theorem.

The paper is organized as follows. In the first two sections we briefly
introduce the elements of the theory of Deligne's weight filtration. In
Sections 3 and 4,  we discuss the bounds for $W_0$, and
$W_j$. In Section 5 we introduce and study the invariants of
singularities describing the topology of fibres of resolution. In
Section 6 we study the invariants related to topology of the links of
singularities. In Section 7 we prove theorems on boundary divisors.
We note that in this paper the term ``scheme'', without further
 qualification, refers to a  scheme of  finite type over $\C$. A
 variety is a reduced scheme.

\subsection{Acknowledgements} The third author  heartily thanks the Max Planck Institut f\"ur 
Mathematik, Bonn for its warm hospitality. Our thanks also to the referee
for bringing \cite{ks} to our attention, and  to J\'anos Koll\'ar
for some other comments.

\section{ Properties of Deligne's Weight filtration}

Although the weight filtration is part of a much   more elaborate
story -- the theory of mixed Hodge structures -- it seems useful present the elementary
features independently of this.
A simple normal crossing divisor  provides the key  motivating example
for the construction of   the  weight filtration. If $X$ is a union
of two components, the cohomology fits into  the Mayer-Vietoris
sequence. In general, it is computed by a Mayer-Vietoris (or
\v{C}ech or descent) spectral sequence 
$$E_1^{pq}= \bigoplus_{i_0<\ldots <i_p} H^q(X^{i_0\ldots i_p})\Rightarrow H^{p+q}(X)$$
where $X^{i_0i_1\ldots}$ are intersections of components.
The associated filtration on $H^*(X)$ is precisely the weight filtration
$W_0H^i(X)\subseteq W_1H^i(X)\subseteq \ldots$. In explicit terms
\begin{equation}
  \label{eq:Wsnc}
W_jH^i(X) = \im H^i(X, \ldots 0\to \bigoplus_{m_0<m_1\ldots }
\pi_*\Q_{X^{m_0\ldots m_{i-j}}}\to\ldots )  
\end{equation}
via the resolution
$$0\to \Q_X\to \bigoplus_m \pi_*\Q_{X^m}\to \bigoplus_{m<n}
\pi_*\Q_{X^{mn}}\to $$
where $\pi:X^{mn\ldots}\to X$ denote the inclusions.

More generally, we introduce the weight filtration in an axiomatic way.
Given an complex algebraic variety or scheme $X$, Deligne
\cite{deligne}  defined
the weight filtration $W_\dt H_c^i(X)$ on the
compactly supported rational cohomology. It is an increasing
filtration possessing  the following
properties:
\begin{enumerate}
\item[(W1)] These subspaces are preserved by proper pullbacks.

\item[(W2)] If $X$ is a divisor with simple normal crossings in a
  smooth complete variety, then
  $W_\dt H_c^i(X)=W_\dt H^i(X)$ is the filtration associated to the Mayer-Vietoris
  spectral sequence in \eqref{eq:Wsnc}. In particular, if
$X$ is smooth and complete, $W_jH^i(X)= 0$ for $j<i$ and $W_jH^i(X) = H^i(X)$
for $j\geq i$.

\item[(W3)] If  $U\subseteq X$ is an open immersion and $Z=X-U$, then
the standard exact sequence
$$\ldots H_c^{i-1}(Z)\to H_c^i(U)\to H_c^i(X)\to \ldots$$
restricts to an exact sequence
$$\ldots W_jH_c^{i-1}(Z)\to W_jH_c^i(U)\to W_jH_c^i(X)\to \ldots$$

\item[(W4)] Weights are multiplicative:
$$W_jH_c^i(X\times Y) = \bigoplus_{a+b=j,r+s=i} W_aH_c^r(X)\otimes W_bH_c^s(Y)$$

\end{enumerate}

The goal of the remainder of this section is to prove that the axioms
given above uniquely characterize the weight filtration on smooth or
projective varieties. In fact, we will not need the last axiom for this.
The proof of existence will be reviewed in the
next section.

\begin{lemma}\label{lemma:smooth}
  Suppose that $W_jH^i_c(-)$ are  $W'_jH_c^i(-)$ are two families of filtrations satisfying  (W1)-(W3).
 Then $W'_jH_c^i(X)=W_jH_c^i(X)$ for all smooth $X$.
\end{lemma}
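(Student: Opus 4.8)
The plan is to reduce the general smooth case to the two situations in which (W2) prescribes the filtration outright: smooth complete varieties, and simple normal crossing divisors inside them. Given a smooth $X$, I would invoke Hironaka's theorem (Nagata compactification followed by resolution of singularities, legitimate since we work over $\C$) to choose a smooth complete $\bar X$ containing $X$ as a dense open subvariety with $D:=\bar X\setminus X$ a simple normal crossing divisor. By (W2), the two filtrations agree on $H_c^i(\bar X)=H^i(\bar X)$, which is moreover pure of weight $i$: $W_{i-1}H_c^i(\bar X)=0$ and $W_jH_c^i(\bar X)=H_c^i(\bar X)$ for $j\ge i$ (and likewise for $W'$). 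By (W2) again, on $D$ both $W_\bullet$ and $W'_\bullet$ equal the filtration associated to the Mayer--Vietoris spectral sequence \eqref{eq:Wsnc}; since that spectral sequence assembles $H_c^i(D)$ out of cohomology in degrees $\le i$ of the smooth complete strata of $D$, one also has $W_jH_c^i(D)=H_c^i(D)$ for $j\ge i$. It then remains to transport this coincidence along the exact sequence of (W3) for the open immersion $X\hookrightarrow\bar X$ with complement $D$, namely $\cdots\to H_c^{i-1}(D)\xrightarrow{\delta}H_c^i(X)\xrightarrow{\beta}H_c^i(\bar X)\to\cdots$.

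Fix $i$ and $j$. If $j\ge i$, then in the exact sequence obtained from (W3) by restricting to $W_j$-pieces, every term of cohomological degree $\le i$ other than $W_jH_c^i(X)$ is already the full cohomology group (the $\bar X$-terms by purity, the $D$-terms by the bound above), so a short diagram chase forces $W_jH_c^i(X)=H_c^i(X)$, and identically $W'_jH_c^i(X)=H_c^i(X)$. If $j<i$, the key point is that $\beta$ sends $W_jH_c^i(X)$ into $W_jH_c^i(\bar X)=0$, whence $W_jH_c^i(X)\subseteq\ker\beta=\im\delta$; exactness of the $W_j$-sequence at $H_c^i(X)$ then gives $W_jH_c^i(X)=\delta\!\left(W_jH_c^{i-1}(D)\right)$. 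The right-hand side depends only on the topological connecting map $\delta$ and on $W_\bullet H_c^{i-1}(D)$, which we have already seen coincides with $W'_\bullet H_c^{i-1}(D)$; running the same argument for $W'$ yields $W'_jH_c^i(X)=\delta(W'_jH_c^{i-1}(D))=\delta(W_jH_c^{i-1}(D))=W_jH_c^i(X)$. Hence $W=W'$ on every smooth variety.

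The delicate step is the case $j<i$. A priori the exact sequence of (W3) pins $W_jH_c^i(X)$ down only as a subspace of $H_c^i(X)$ that extends $\delta(W_jH_c^{i-1}(D))$ and maps onto a prescribed subspace of $H_c^i(\bar X)$, and such an extension of one subspace by another is not recoverable from its sub- and quotient-pieces alone. What collapses this ambiguity is exactly the purity $W_{i-1}H_c^i(\bar X)=0$ supplied by (W2) and the completeness of $\bar X$: it forces $W_jH_c^i(X)$ to lie entirely inside $\im\delta$ when $j<i$, after which exactness determines it. Everything else is routine: the existence of a smooth normal-crossings compactification, the fact that the maps in the exact sequence of (W3) are the topological ones and hence common to $W$ and $W'$, and finite-dimensionality of all the groups in sight (legitimizing the diagram chases). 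The only axioms this argument uses are (W2) and (W3).
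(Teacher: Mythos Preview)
Your proof is correct and follows essentially the same approach as the paper: compactify by a smooth complete variety with SNC boundary, use (W2) to determine the filtration on $\bar X$ and on $D$, and then extract $W_jH_c^i(X)$ from the (W3) exact sequence using purity of $H^i(\bar X)$. The only cosmetic difference is that the paper splits the case $j<i$ into $j=i-1$ and $j<i-1$ to display explicit formulas, whereas you handle $j<i$ uniformly via $W_jH_c^i(X)=\delta\bigl(W_jH_c^{i-1}(D)\bigr)$; the two formulations are equivalent.
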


\begin{proof}
  By resolution of singularities \cite{Hir}, we may choose a smooth compactification
  $\bar X$ of $X$ so that $E=\bar X-X$ is divisor with simple normal
  crossings. Then from the exact sequence 
$$\ldots W_jH_c^{i-1}(\bar X) \to W_jH_c^{i-1}(E)\to W_jH_c^i(X)\to
W_jH_c^i(\bar X)\to \ldots$$
we deduce that
$$W_jH_c^i(X)=
\begin{cases}
  H_c^i(X) &\text{ if $j\ge i$}\\
\coker[H^{i-1}(\bar X)\to H^{i-1}(E)] & \text{ if $j=i-1$}\\
W_jH^{i-1}(E) & \text{ if $j<i-1$}
\end{cases}
$$
The filtration $W'$ would have an identical description.
\end{proof}

\begin{lemma}\label{lemma:mayer}
 Assume that $W_jH_c^i(X)$ satisfies the  axioms (W1)-(W3).
 Given a   variety $X$
with a closed set $S$ and a desingularization $f:\tilde X\to X$
which is an isomorphism over $X-S$. Let $E=f^{-1}(S)$.
 Then there is an exact sequence
$$
\ldots \to W_jH_c^{i-1}(E)\to W_jH_c^i(X)\to
W_jH_c^{i}(\tilde X)\oplus W_jH_c^{i}(S)\to \ldots
$$

\end{lemma}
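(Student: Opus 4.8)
The plan is to derive the blow-up (descent) sequence from the localization sequences guaranteed by axiom (W3). First I would put $U=X\setminus S$; since $f$ is an isomorphism over $X\setminus S$ it identifies $\tilde X\setminus E$ with $U$, so the square with vertices $E,\tilde X,S,X$ is cartesian. Applying (W3) to the open immersion $U\hookrightarrow X$ with complement $S$, and to $U\hookrightarrow\tilde X$ with complement $E$, produces two long exact sequences
$$\ldots\to W_jH_c^{i-1}(S)\to W_jH_c^i(U)\to W_jH_c^i(X)\to W_jH_c^i(S)\to\ldots$$
$$\ldots\to W_jH_c^{i-1}(E)\to W_jH_c^i(U)\to W_jH_c^i(\tilde X)\to W_jH_c^i(E)\to\ldots$$
which share the middle terms $W_jH_c^i(U)$.

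Next I would check that these two sequences fit into a commutative ladder. The map $f$ is proper, as are the closed immersions $S\hookrightarrow X$ and $E\hookrightarrow\tilde X$ and the restriction $f|_E\colon E\to S$; so by (W1) each induces a pullback on the weight pieces $W_jH_c^\bullet(-)$, and on $U$ the induced map is the identity. The resulting diagram commutes because the open--closed long exact sequence for compactly supported cohomology is functorial for the proper map $f$, which is cartesian with respect to the decompositions $X=U\sqcup S$ and $\tilde X=U\sqcup E$; and by (W3) every arrow occurring here is the restriction to weight subspaces of the corresponding arrow on ordinary $H_c^\bullet$, so the weight ladder is a subdiagram of a commutative one.

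Then I would invoke the Mayer--Vietoris gluing lemma (the braid, or Barratt--Whitehead, lemma): for a commutative ladder of long exact sequences whose shared column $W_jH_c^i(U)\to W_jH_c^i(U)$ consists of isomorphisms, the two rows splice into one long exact sequence on the remaining terms,
$$\ldots\to W_jH_c^{i-1}(E)\xrightarrow{\partial}W_jH_c^i(X)\to W_jH_c^i(\tilde X)\oplus W_jH_c^i(S)\to W_jH_c^i(E)\xrightarrow{\partial}W_jH_c^{i+1}(X)\to\ldots,$$
in which the second arrow is $(f^*,\iota_S^*)$ with $\iota_S\colon S\hookrightarrow X$, the third is $\iota_E^*-(f|_E)^*$ with $\iota_E\colon E\hookrightarrow\tilde X$, and $\partial$ is the composite of the connecting map $W_jH_c^{i-1}(E)\to W_jH_c^i(U)$ of the second sequence with $W_jH_c^i(U)\to W_jH_c^i(X)$. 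This is precisely the asserted exact sequence, its exactness being the formal diagram chase built into the braid lemma. (Note that smoothness of $\tilde X$, and indeed axiom (W2), play no role here.)

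The step I expect to be the main obstacle is the commutativity of the ladder: one must know that pullback along the proper map $f$ is compatible not merely with the weight subspaces---which is (W1)---but with the connecting homomorphisms of the (W3) sequences. I would dispose of this by passing to unfiltered $H_c^\bullet$, where it is the standard naturality of the localization triangle under cartesian proper maps, and then reading off the weight-filtered statement from (W3). After that point there is nothing left but homological algebra.
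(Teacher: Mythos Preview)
Your proof is correct and follows the same approach as the paper: the paper writes down exactly the same commutative ladder of the two (W3) localization sequences over $U=X\setminus S$ and says the result ``follows from a diagram chase,'' which is precisely your Barratt--Whitehead argument spelled out in more detail.
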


\begin{proof}
 This   follows from a diagram chase on
$$
\xymatrix{
\ldots W_jH_c^{i-1}(S)\ar[r]\ar[d] & W_jH_c^i(U)\ar[r]\ar[d]^{=} & W_jH_c^i(X)\ar[r]\ar[d] & W_jH_c^i(S)\ar[d]\ldots \\ 
 W_jH_c^{i-1}(E)\ar[r] & W_jH_c^i(U)\ar[r] & W_jH_c^i(\tilde X)\ar[r] & W_jH^i(E)
}
$$
where $U= X-S$.
\end{proof}

\begin{prop}\label{prop:proj}
 Suppose that $W_jH^i_c(-)$ are  $W'_jH_c^i(-)$ are two families of filtrations satisfying  (W1)-(W3).
Then $W'_jH_c^i(X)=W_jH_c^i(X)$ for all projective $X$.
\end{prop}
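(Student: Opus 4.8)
The plan is to reduce the projective case to the smooth case (Lemma \ref{lemma:smooth}) by induction on dimension, using Lemma \ref{lemma:mayer} as the inductive engine. Given a projective variety $X$ of dimension $n$, I would choose a resolution of singularities $f:\tilde X\to X$ (Hironaka), let $S\subset X$ be the closed singular locus (or more precisely a closed set outside of which $f$ is an isomorphism), and set $E=f^{-1}(S)$. Then $\dim S<n$ and $\dim E<n$, and both $S$ and $E$ are again projective, while $\tilde X$ is smooth and projective. Lemma \ref{lemma:mayer} gives, for either family of filtrations, a long exact sequence
$$\ldots \to W_jH_c^{i-1}(E)\to W_jH_c^i(X)\to W_jH_c^{i}(\tilde X)\oplus W_jH_c^{i}(S)\to W_jH_c^i(E)\to \ldots$$
and similarly for $W'$. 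These two sequences are compatible with the natural maps on cohomology, so they fit into a ladder diagram in which every vertical arrow is an inclusion of subspaces of the same ambient cohomology group.

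The inductive hypothesis is that $W_j=W'_j$ on all projective varieties of dimension $<n$; the base case $n=0$ is trivial since a $0$-dimensional variety is smooth and Lemma \ref{lemma:smooth} applies. Under the inductive hypothesis, the vertical maps attached to $E$ and $S$ are equalities, and the vertical map attached to $\tilde X$ is an equality by Lemma \ref{lemma:smooth} (as $\tilde X$ is smooth). A diagram chase on the ladder — the usual argument that if two filtrations sit inside a common exact sequence and agree on all terms but one, then they agree on the remaining one — then forces $W_jH_c^i(X)=W'_jH_c^i(X)$. This is the same style of chase already used to prove Lemma \ref{lemma:mayer} itself, now run "in reverse" to pin down the middle term.

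The main point requiring care, and the step I would expect to be the real obstacle, is making the diagram chase genuinely conclusive rather than merely plausible: one must check that the relevant maps in the long exact sequence are themselves morphisms of filtered spaces in a compatible way (this is exactly what axiom (W3), together with (W1) applied to $f$, delivers), and that "agreement on all but one term of an exact sequence" really does determine the last term when the comparison maps are inclusions. Concretely, surjectivity of $W_jH_c^i(X)\to \ker\big(W_jH_c^i(\tilde X)\oplus W_jH_c^i(S)\to W_jH_c^i(E)\big)$ and injectivity modulo the image of $W_jH_c^{i-1}(E)$ must be read off simultaneously for $W$ and $W'$; since the ambient sequences (the unfiltered ones) coincide, an element of $H_c^i(X)$ lies in $W_j$ iff its image lies in $W_j$ of the next term and a lift of it to $H_c^{i-1}(E)$ can be chosen in $W_j$ — and by the inductive hypothesis the latter two conditions are identical for $W$ and $W'$. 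This yields $W_jH_c^i(X)=W'_jH_c^i(X)$, completing the induction and the proof.
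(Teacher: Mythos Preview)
Your approach is different from the paper's, and the inductive step contains a real gap. The paper does \emph{not} induct on $\dim X$ via an internal resolution $\tilde X\to X$; instead (following El Zein) it embeds $X\subset P=\PP^n$, takes an embedded resolution $\pi:\tilde P\to P$ with $E=\pi^{-1}(X)$ simple normal crossing, and applies Lemma~\ref{lemma:mayer} to $\pi$. The point of this permutation of roles is that now \emph{all three} auxiliary terms $P,\tilde P,E$ are governed directly by (W2), and for $j<i$ the smooth ones satisfy $W_jH^i(P)=W_jH^i(\tilde P)=W_jH^{i+1}(P)=0$. The filtered sequence therefore degenerates to an isomorphism $W_jH^i_c(X)\xrightarrow{\sim}W_jH^i(E)$, and since the underlying map $H^i(X)\to H^i(E)$ is already injective (its kernel would have to come from $H^i(P)$ with vanishing pullback to $H^i(\tilde P)$, impossible as $\pi^*$ is injective between smooth projective varieties), $W_jH^i_c(X)$ is pinned down as a subspace with no induction needed.

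The gap in your argument is the claim that knowing the filtration on all but one term of a long exact sequence, with each $W_j$-level exact, determines the filtration on the remaining term. This is false. Take $0\to\Q\xrightarrow{\phi}\Q^2\xrightarrow{\psi}\Q\to 0$ with $\phi(b)=(b,0)$, $\psi(x,y)=y$; set $W_jB=0$ and $W_jD=\Q$. Then \emph{any} line $W_jC\subset\Q^2$ transverse to $\Q\times 0$ makes $0\to 0\to W_jC\to\Q\to 0$ exact, so $W_jC$ is not determined. Your proposed characterization ``$c\in W_j$ iff its image lies in $W_j$ and a lift can be chosen in $W_j$'' does not even parse for $c\notin\ker\psi$, and no correct such characterization exists from (W1)--(W3) alone. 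What would rescue your induction is injectivity of $f^*:H^i(X)\to H^i(\tilde X)$ for a resolution $f$ of a singular projective $X$; that is true, but the standard proofs invoke mixed Hodge theory or the decomposition theorem, which is circular in a uniqueness argument for the weight filtration.
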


\begin{proof}
  The proof is inspired by the work of El Zein \cite{elzein}. Choose
  an embedding $X\subset P=\PP^n$. Let $\pi:\tilde P\to P$ be an
  embedded resolution of singularities such that $E=\pi^{-1}(X)$ is a
  divisor with simple normal crossings. By lemma~\ref{lemma:mayer}, we
  have an exact sequence
$$
\ldots \to W_jH^i(P)\to
W_jH^{i}(\tilde P)\oplus W_jH_c^{i}(X)\to W_jH^{i}(E) \to W_jH^{i+1}(P)\ldots
$$
This implies that
$$
W_jH_c^i(X)=
\begin{cases}
  H^i_c(X) & \text{ if $j\ge i$}\\
W_jH^i(E) &\text{ if $j<i$}
\end{cases}
$$
and this determines $W_j$ uniquely.
\end{proof}

As an illustration of this method, we can recover an elementary
description of the weight filtration on smooth or projective toric
varieties due to Weber \cite{weber}. Fix an
integer $p>1$ (not necessarily prime). The map given by multiplication
by $p$ on tori extends to a ``Frobenius-like'' endomorphism
$\phi_p:X\to X$ for any toric variety $X$ [loc. cit.]
 When $X$ is smooth and projective, $\phi_p$ acts on $H^i(X)$ by
 $p^{i/2}$. In general,  we define
$W_jH^i_c(X)$ to be the sum of eigenspaces of $\phi_p^*$ with
eigenvalue $p^k$ with $k\le j/2$.  This filtration can be checked to
satisfy (W1)-(W3) restricted to the category of toric varieties. 

\begin{prop}[Weber] If $X$ is a toric variety,
  then $W$ coincides with Deligne's weight filtration.
\end{prop}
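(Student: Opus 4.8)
The plan is to reduce the statement to the uniqueness results \ref{lemma:smooth} and \ref{prop:proj}. As noted above, the filtration defined by the eigenvalues of $\phi_p^*$ satisfies (W1)--(W3) on the category of toric varieties --- functoriality of the Frobenius-like endomorphism under toric morphisms gives (W1), the identity $\phi_p^*=p^{i/2}$ on $H^i$ of a smooth projective toric variety together with the $\phi_p$-equivariance of the Mayer--Vietoris spectral sequence gives (W2), and the $\phi_p$-equivariance of the excision sequence of a $T$-invariant closed subset gives (W3). Consequently, by Lemma \ref{lemma:smooth} (and by Proposition \ref{prop:proj} in the projective case) the eigenvalue filtration coincides with Deligne's weight filtration $W$ on every smooth toric variety --- in particular on every torus $(\C^*)^n$ --- and on every projective toric variety.

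For an arbitrary toric variety $X$ of dimension $n$ I would argue by induction on $n$ using the orbit stratification. Let $U_m\subseteq X$ be the open $T$-invariant union of the orbits of codimension at most $m$, so that $U_0$ is the dense torus, $U_n=X$, and each locally closed stratum $U_m\setminus U_{m-1}$ is a disjoint union of tori. Applying (W3) to $U_{m-1}\subseteq U_m$ yields a long exact sequence
$$\cdots\to W_jH_c^{i}(U_{m-1})\to W_jH_c^{i}(U_m)\to W_jH_c^{i}(U_m\setminus U_{m-1})\to W_jH_c^{i+1}(U_{m-1})\to\cdots$$
and an identical one for the eigenvalue filtration; by the first paragraph the two filtrations agree on $U_m\setminus U_{m-1}$, and by the inductive hypothesis they agree on $U_{m-1}$. (One could equally well use a toric resolution $f\colon\tilde X\to X$ together with Lemma \ref{lemma:mayer}, with $S$ the reduced non-isomorphism locus and $E=f^{-1}(S)$; then $\tilde X$ is smooth and $S,E$ are toric of dimension $<n$, and the argument is the same.)

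The main obstacle is that a long exact sequence does not by itself pin down a filtration on one of its terms given the others, so the step from $U_{m-1}$ to $U_m$ requires the extra structure coming from $\phi_p$. Each map above commutes with $\phi_p^*$ and is a morphism of mixed Hodge structures, hence strict for $W$; therefore $\mathrm{Gr}^W_{2k}$ of the sequence is again exact and $\phi_p^*$-equivariant. Since $\phi_p^*$ acts as multiplication by $p^k$ on $\mathrm{Gr}^W_{2k}H_c^{*}$ of $U_{m-1}$ and of $U_m\setminus U_{m-1}$ (by induction and the torus case), a diagram chase shows $\phi_p^*-p^k$ is nilpotent on $\mathrm{Gr}^W_{2k}H_c^{i}(U_m)$; comparing the dimensions of the primary (generalized-eigenspace) decomposition of $H_c^{i}(U_m)$ under $\phi_p^*$ with those of the associated graded of $W$ then forces $W_\bullet H_c^{i}(U_m)$ to be exactly the (generalized) $\phi_p^*$-eigenvalue filtration. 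Taking $m=n$ completes the induction; no appeal to Hironaka is needed, since toric resolutions and the orbit stratification are elementary.
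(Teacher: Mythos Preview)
Your first paragraph is essentially the paper's argument, with one caveat. The paper only treats the smooth or projective case, and its point is precisely that Lemma~\ref{lemma:smooth} and Proposition~\ref{prop:proj} cannot be \emph{applied} as black boxes, since their hypotheses ask for filtrations satisfying (W1)--(W3) on all varieties, whereas the eigenvalue filtration is only defined on toric ones. Instead one \emph{re-runs} their proofs inside the toric category, which requires the auxiliary data --- a smooth compactification $X\subset\bar X$ with SNC boundary for the lemma, and an ambient space $X\subset P$ together with an embedded resolution $\tilde P\to P$ for the proposition --- to be constructed torically; the paper makes exactly this point and notes that such constructions are available. Your phrase ``by Lemma~\ref{lemma:smooth}'' elides this step.

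Your second and third paragraphs go beyond the paper, which stops at the smooth and projective cases. The induction via the orbit stratification is a reasonable route, but observe that what it actually yields is that Deligne's $W_\bullet$ coincides with the \emph{generalized} eigenspace filtration of $\phi_p^*$, as you yourself flag with the parenthetical ``(generalized)'' at the end: the diagram chase only gives $(\phi_p^*-p^k)$ nilpotent on $\mathrm{Gr}^W_{2k}H^i_c(U_m)$, not zero. To recover the honest eigenspace filtration as defined in the text one needs $\phi_p^*$ to act semisimply on $H^i_c(X)$; this is part of Weber's original theorem and is implicit in the paper's assertion that the eigenspace filtration already satisfies (W1)--(W3), but your induction does not establish it independently.
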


When $X$ is smooth or projective, this can be proved exactly as in
lemma~\ref{lemma:smooth} and proposition~\ref{prop:proj} with the
additional (realizable) constraint that the intermediate varieties and maps
$X\subset \bar X$, $X\subset P$ and $\pi:\tilde P\to P$ are
constructed within the category of toric varieties.

\section{Simplicial resolutions}

The general construction is based on simplicial resolutions.
Perhaps, a motivating example is in order. Given a divisor $X$ with
simple normal crossings, the underlying combinatorics of how the components
fit together is determined by  the dual complex $\Sigma_X$. This  is the simplicial
complex having one vertex for each connected component; a simplex
lies in $\Sigma_X$ if and only the corresponding components meet.
The cohomology of the  dual complex is precisely $W_0H^i(X)$. In order
to describe  the rest of weight
filtration in this fashion, we need the full simplicial resolution. Before describing it,
we recall some standard material \cite{deligne, gnpp, ps}.
A simplicial object in a category is a diagram
$$
\xymatrix{ \ldots X_2\ar[r]\ar@<1ex>[r]\ar@<-1ex>[r] &
  X_1\ar@<1ex>[r]\ar@<-1ex>[r] & X_0 }
$$
with $n$ face maps $\delta_i:X_n\to X_{n-1}$ satisfying the standard
relation $\delta_i\delta_j = \delta_{j-1}\delta_i$ for $i<j$; this
would be more accurately called a ``strict simplicial'' or
``semisimplicial'' object since we do not insist on degeneracy maps
going backwards.  The basic example of a simplicial set,
i.e. simplicial object in the category of sets, is given by taking
$X_n$ to be the set of $n$-simplices of a simplicial complex on an
ordered set of vertices. Let $\Delta^n$ be the standard $n$-simplex
with faces $\delta_i':\Delta^{n-1}\to \Delta^n$.  Given a simplicial
set or more generally a simplicial topological space, we can glue the
$X_n\times \Delta^n$ together by identifying $(\delta_i x,y)\sim
(x,\delta_i' y)$. This leads to a topological space $|X_\dt|$ called
the geometric realization, which generalizes the usual construction of
the topological space associated a simplicial complex. 
Here is a basic example.

\begin{ex}\label{ex:ncd1}
  Suppose that $X$ is a topological space given as union of open or closed subsets 
  $X^i$. Let 
  $X^{ij\ldots}=X^i\cap X^j\ldots$ denote the intersections.  Then a 
  simplicial space is given by taking $X_n$ to be the disjoint
  union of  the $(i+1)$-fold intersections $X^{i_0\ldots i_n}$. The face
  map $\delta_k$ is given by inclusions
$$X^{i_0\ldots i_n}\subset X^{i_0\ldots\hat i_k\ldots i_n}\>(
i_1<\ldots< i_k)$$ 
When $X$ is triangulable, and $X^i$ are subcomplexes, then $|X_\dt|$
and $X$ are homotopy equivalent. 
\end{ex}

\begin{ex}\label{ex:ncd}
  The above construction and comment applies to the case of an analytic space 
$X$ with irreducible components $X^i$. Of particular interest is the
case where $X$ is a divisor with simple normal crossings; in this case
$X_\dt$ is referred to as the canonical simplicial resolution of
$X$.  Applying the connected
component functor results in a simplicial complex $\pi_0(X_\dt)$ which
none other than the dual complex $\Sigma_X$.
\end{ex}

The notion  of a  simplicial resolution can be extended to arbitrary
varieties as follows:

\begin{thm}[Deligne]
 Given any (possibly reducible) variety $X$, there
exists a smooth simplicial variety $X_\dt$, which 
we call a {\em simplicial resolution},
with a collection of proper morphisms $\pi_\dt:X_\dt\to X$ (commuting
with face maps), called an augmentation, inducing
a homotopy equivalence between $|X_\dt|$ and $X$. Given a morphism $f:X\to Y$
there exists simplicial resolutions $X_\dt, Y_\dt$ and a 
morphism $f_\dt:X_\dt\to Y_\dt$ compatible with $f$.
\end{thm}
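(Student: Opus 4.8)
The plan is to build $X_\dt$ one skeleton at a time, applying Hironaka's resolution of singularities at each stage, so that the augmentation $\pi_\dt\colon X_\dt\to X$ becomes a \emph{proper hypercover}; the asserted homotopy equivalence $|X_\dt|\simeq X$ then follows from the theorem that a proper hypercover is of cohomological descent. The only genuinely non-formal ingredients are resolution of singularities and this descent theorem, which I will take as a black box from \cite{deligne} (see also \cite{gnpp, ps}).

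First I would treat a single variety $X$. Choose a proper surjective $\pi_0\colon X_0\to X$ with $X_0$ smooth, and suppose inductively that a smooth $n$-truncated semisimplicial variety $X_{\le n}=(X_0,\dots,X_n)$, compatibly augmented to $X$, has been constructed so as to be an $n$-truncated proper hypercover --- that is, setting $M_0:=X$, each comparison map $X_m\to M_m$ for $m\le n$ is proper and surjective, $M_m$ being the degree-$m$ term of the $(m-1)$-coskeleton of $X_{\le m-1}$ formed in schemes over $X$. Now form $M_{n+1}$, concretely the limit of the diagram assembled from the $X_i$ ($i\le n$) indexed by the proper faces of $\Delta^{n+1}$; as a \emph{finite} limit of finite-type schemes it is again of finite type, and it carries $n+2$ maps to $X_n$ obeying the semisimplicial face relations. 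Choose, by resolution, a proper surjective $X_{n+1}\to M_{n+1}$ with $X_{n+1}$ smooth, and take $\delta_i\colon X_{n+1}\to X_n$ to be the composites through $M_{n+1}$; the relations $\delta_i\delta_j=\delta_{j-1}\delta_i$ are inherited from $M_{n+1}$, and compatibility of the augmentation is automatic because $M_{n+1}$ lives over $X$. This extends $X_{\le n}$ to an $(n+1)$-truncated proper hypercover, and the colimit over $n$ is a smooth semisimplicial variety $X_\dt$ with proper augmentation which is a proper hypercover. Cohomological descent then gives $\Q_X\xrightarrow{\sim}R\pi_{\dt *}\Q_{X_\dt}$, indeed with arbitrary twisted coefficients, and since the hypercohomology of $X_\dt$ computes the cohomology of $|X_\dt|$ this makes $|X_\dt|\to X$ a cohomology isomorphism with all local systems; combined with the surjectivity conditions, which force isomorphisms on $\pi_0$ and $\pi_1$, a Whitehead-type argument promotes this to a homotopy equivalence.

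For a morphism $f\colon X\to Y$ I would first run the above to get $Y_\dt\to Y$, and then build $X_\dt$ over it. At the base, resolve $X\times_Y Y_0$ to obtain a proper surjective $X_0\to X\times_Y Y_0$ with $X_0$ smooth, so $X_0\to X$ is proper surjective and $X_0\to Y_0$ lifts $f$. Inductively, given $X_{\le n}\to Y_{\le n}$ compatible with the augmentations, functoriality of the coskeleton produces a canonical map $M_{n+1}^X\to M_{n+1}^Y$ commuting with the projections to $X_n$ and $Y_n$. Set $P_{n+1}=M_{n+1}^X\times_{M_{n+1}^Y}Y_{n+1}$; its projection to $M_{n+1}^X$ is the base change of the proper surjective $Y_{n+1}\to M_{n+1}^Y$, hence proper and surjective. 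Resolving $P_{n+1}$ gives a proper surjective $X_{n+1}\to P_{n+1}$ with $X_{n+1}$ smooth; then $X_{n+1}\to M_{n+1}^X$ is proper surjective, so the hypercover condition persists, and $X_{n+1}\to Y_{n+1}$ is the other projection, with the face maps on the two sides matching by virtue of the fiber-product square. Passing to colimits yields $f_\dt\colon X_\dt\to Y_\dt$ compatible with $f$, with $X_\dt\to X$ still a proper hypercover.

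The step I expect to be the real obstacle is the cohomological descent theorem for proper hypercovers and its promotion to a genuine homotopy equivalence: this is where properness enters essentially, in contrast with étale hypercovers, and its proof rests on proper base change together with a \v{C}ech/spectral-sequence argument, the homotopy refinement additionally using that complex varieties and realizations of smooth simplicial varieties have CW homotopy type and that hypercovers compute the fundamental group. Everything else --- the coskeleton formalism, the simplicial identities, the fiber-product bookkeeping in the relative case --- is routine once one records that the relevant coskeleta in each fixed degree are finite limits, hence stay within finite-type schemes, where resolution of singularities applies.
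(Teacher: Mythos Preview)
Your sketch is correct and is precisely the standard construction from \cite{deligne} (and expounded in \cite{ps}): build a proper hypercover by iterated coskeleton-plus-resolution, invoke cohomological descent, and upgrade to a homotopy equivalence using CW homotopy type and the fundamental-group computation. The paper itself does not supply an argument for this theorem; it simply records that the result is a consequence of resolution of singularities and defers to \cite{deligne, gnpp, ps} for proofs, noting only that the cubical method of \cite{gnpp} yields a more economical (finite) resolution than Deligne's original infinite one. So you have not merely matched the paper's proof---you have written out what the paper chose to cite. The one point worth flagging is that the passage from a cohomology isomorphism with all local systems to an honest homotopy equivalence, while true in this setting, is the place where a careful write-up would need the most care; you have correctly identified it as such.
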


 The theorem is a consequence of resolution of singularities. Proofs can be found 
in \cite{deligne,gnpp, ps}.  Note that the original construction of
Deligne results in a  necessarily infinite diagram, 
whereas the method of Guillen, Navarro Aznar et. al
yields a fairly economical resolution. The canonical simplicial
resolution of normal crossing divisor is a simplicial resolution in
this technical sense.

\begin{ex}\label{ex:isolated}
  Following the method of \cite{gnpp} we can construct a simplicial
  resolution of a variety $X$ with isolated singularities as
  follows. Let $f:Y\to X$ be a resolution of singularities such
  that the exceptional divisor $E = \cup E^i$ is a divisor with simple
  normal crossings. Write $E^{ij}= E^i\cap E^j$ and $E_{n} = \coprod
  E^{i_0\ldots i_n}$.  Let $S_0\subset X$ be the set of singular points,
  $S_1\subseteq S_0$ be the set of images of $\cup E^{ij}$ and so
  on. Then the simplicial resolution is given by
$$
\xymatrix{ \ldots E_1 \sqcup S_2\ar[r]\ar@<1ex>[r]\ar@<-1ex>[r] & E_0
  \sqcup S_1\ar@<1ex>[r]\ar@<-1ex>[r] & Y\sqcup S_0 }
$$
where the face maps are given by inclusions $S_i\to S_{i-1}$ on the
second component. On the first component $\delta_k$ is given by
$$
\begin{cases}
  E^{i_1\ldots i_n}\subset E^{i_i\ldots\hat i_k\ldots i_n} &\text{if $k\le n$}\\
  f: E^{i_1\ldots i_n}\to S_{n-1} &\text{if $k=n+1$}
\end{cases}
$$
\end{ex}

 Figure 2 depicts the simplest example of this, which is the  simplicial resolution 
$$
\xymatrix{  X_1=E\ar@<1ex>[r]\ar@<-1ex>[r] & X_0=Y\sqcup x }
$$
with its augmentation to $X$, for
an isolated singularity $x$ resolved by a single blow up. Figure 3
depicts the geometric realization  of this simplicial resolution,
which is the cone over $E$ in $Y$. This is homotopic to $X$.

\begin{center}
\begin{figure}[ht]
    \centering
     \includegraphics[height=3.0in]{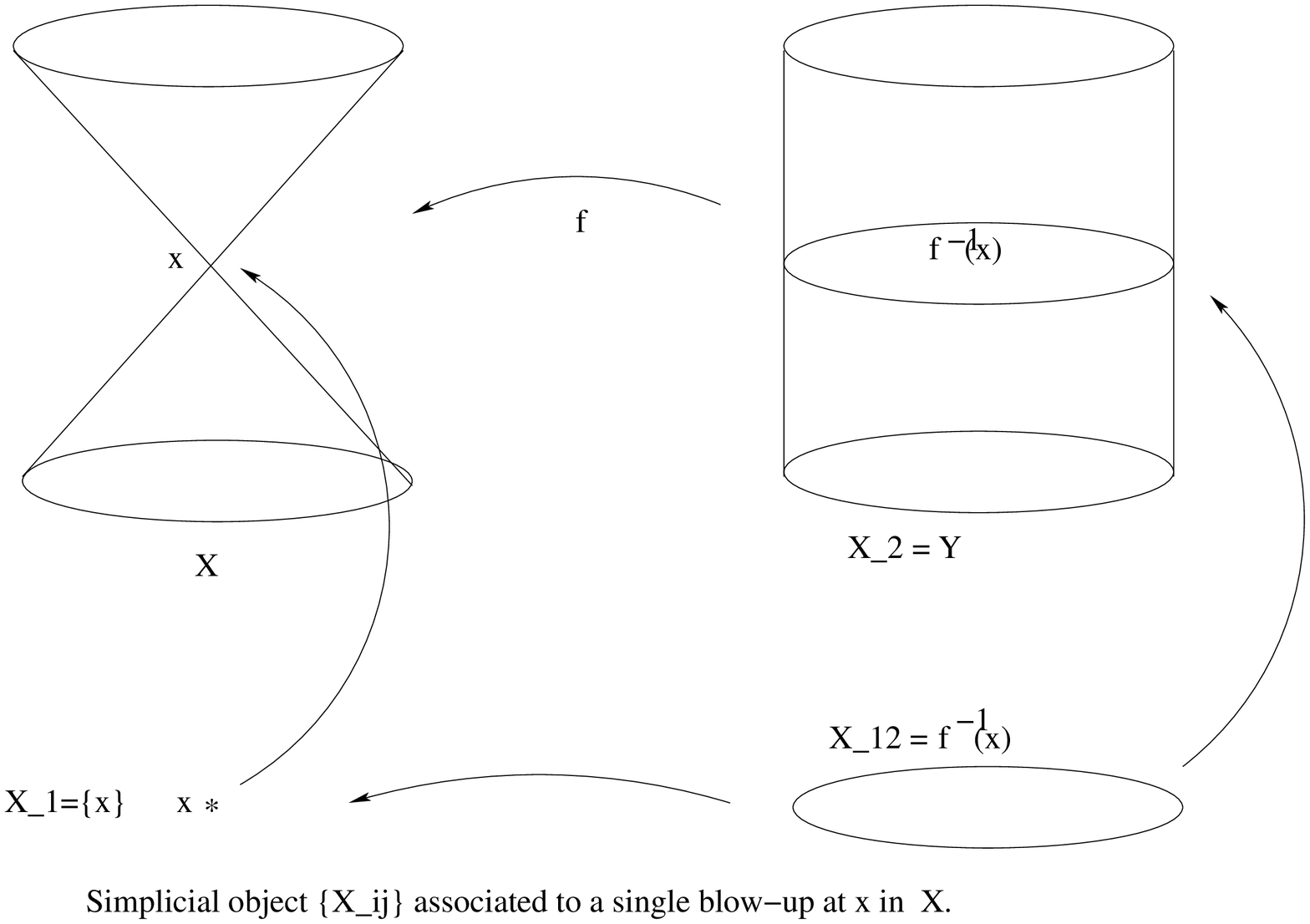}
 \caption{}
  \end{figure}
\end{center}

\begin{center}
\begin{figure}[ht]
    \centering
     \includegraphics[height=3.5in]{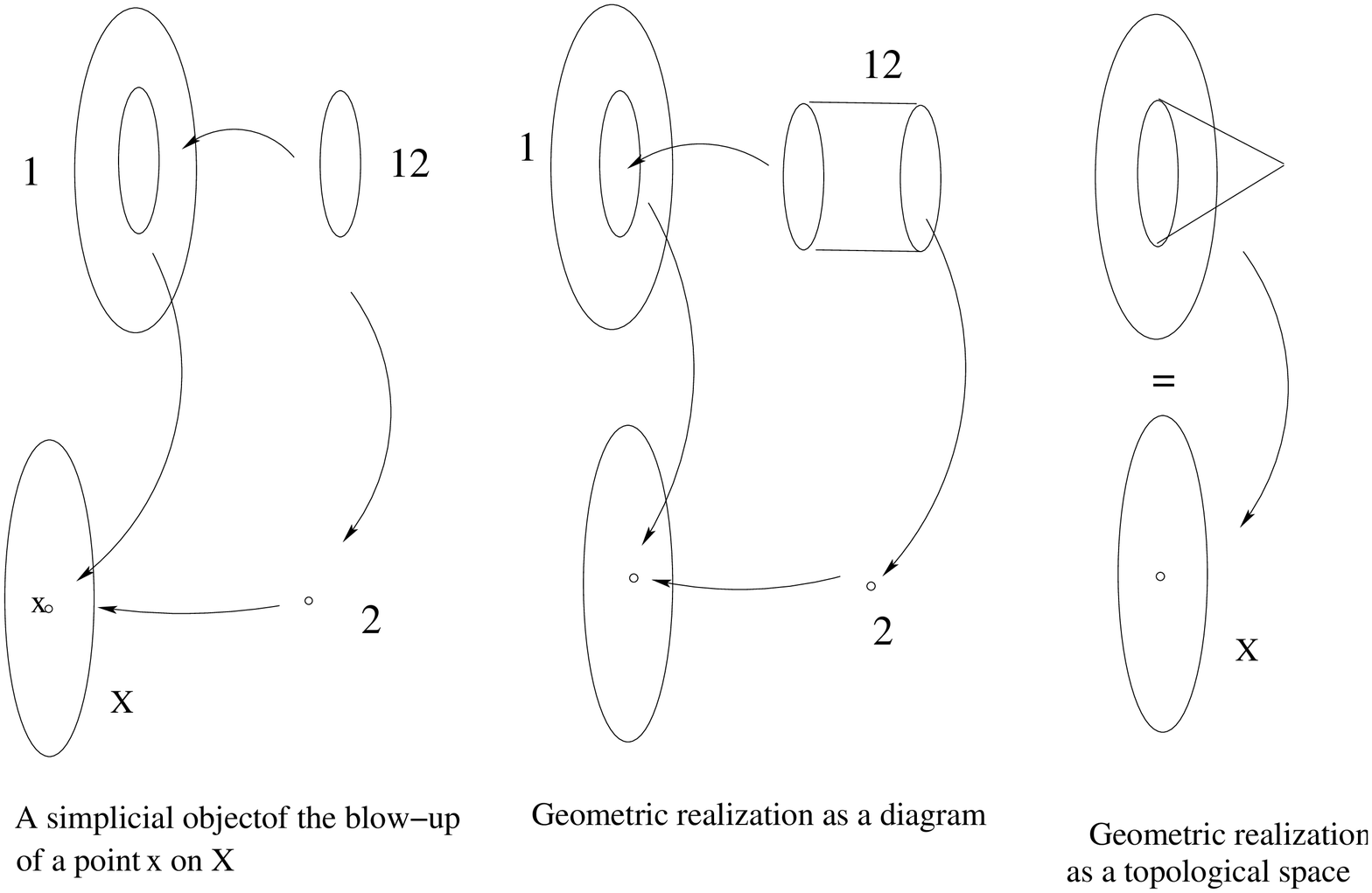}
 \caption{}
  \end{figure}
\end{center}

Given a simplicial space, filtering $|X_\dt|$ by skeleta
$\bigcup_{n\le N} X_n\times \Delta^n/\sim$ yields the spectral
sequence
\begin{equation}
  \label{eq:ss1}
  E_1^{pq} = H^q(X_p,A)\Rightarrow H^{p+q}(|X_\dt|,A)  
\end{equation}
for any abelian group $A$. Part of the datum of this spectral sequence
is the filtration on $H^*(|X_\dt|,A)$ induced by skeleta.
When applied to the canonical simplicial resolution of a divisor $X$ with
simple normal crossings, we recover the Mayer-Vietoris spectral
sequence given earlier.

It is convenient to extend this. A simplicial sheaf on $X_\dt$ is a
collection of sheaves $\F_n$ on $X_n$ with ``coface'' maps
$\delta_i^{-1}\F_{n-1}\to \F_n$ satisfying the face relations.  

\begin{ex}
  The constant sheaves $\Z_{X_\dt}$ with identities for coface
maps forms a simplicial sheaf. 
\end{ex}

\begin{ex}
  Suppose that $\pi_\dt:X_\dt\to X$ is a morphism of spaces commuting
  with face maps. Then the pullback of any
  sheaf $\F_\dt=\pi^{-1}\F$ is naturally a simplicial sheaf. The
  previous example is of this form, but not all are.
\end{ex}

\begin{ex}
If $X_\dt$ is a simplicial object in
the category of complex manifolds, then $\Omega^i_{X_\dt}$ with the
obvious maps, forms a simplicial sheaf.  This example is not of the
previous form.
\end{ex}

We can define cohomology by
setting
$$H^i(X_\dt,\F_\dt) = Ext^i(\Z_{X_\dt},\F_\dt)$$
This generalizes sheaf cohomology in the usual sense, and it can be
extended to the case where $\F_\dt^\dt$ is a bounded below complex of
simplicial sheaves by using a hyper $Ext$.  When $\F=A$ is constant,
this coincides with $H^{i}(|X_\dt|,A)$. But in general the meaning is
more elusive.  There is a spectral sequence
\begin{equation}
  \label{eq:ss2}
  E_1^{pq}(\F^\dt_\dt) = H^q(X_p,\F^\dt_p)\Rightarrow H^{p+q}(X_\dt, \F^\dt_\dt)  
\end{equation}
generalizing \eqref{eq:ss1}.  To be explicit, the differentials are
given by the alternating sum of the  compositions
$$H^q(X_p,\F_p^\dt)\to H^q(X_{p+1},\delta_i^{-1}\F_p^\dt)\to H^q(X_{p+1},\F_{p+1}^\dt)$$
Filtering $\F^\dt$ by the ``stupid
filtration'' $\F_\dt^{\ge n}$ yields a different spectral sequence
\begin{equation}
  \label{eq:ss3}
  {}'E_1^{pq} = H^q(X_\dt,\F^p_\dt)\Rightarrow H^{p+q}(X_\dt, \F^\dt_\dt)  
\end{equation}

\begin{thm}[Deligne]\label{thm:degen}
  If $X_\dt$ is a simplicial object in the category of compact
  K\"ahler manifolds and holomorphic maps. The spectral sequence
  \eqref{eq:ss1} degenerates at $E_2$ when $A=\Q$. 
\end{thm}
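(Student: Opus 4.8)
The plan is to upgrade the spectral sequence \eqref{eq:ss1} with $\Q$-coefficients to a spectral sequence of pure Hodge structures and then argue by weights. Since each $X_p$ is a compact K\"ahler manifold, the rational cohomology $H^q(X_p,\Q)$ carries a pure Hodge structure of weight $q$; moreover $H^q(X_p,\Q)=0$ unless $0\le q\le 2\dim X_p$, so for each fixed total degree $n$ only the finitely many terms with $0\le p\le n$ contribute and convergence is not an issue. The face maps $\delta_i\colon X_{p+1}\to X_p$ are holomorphic, so the pullbacks $\delta_i^*$ are morphisms of Hodge structures, and hence so is their alternating sum, which is exactly the differential $d_1$ of \eqref{eq:ss1}. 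Thus each row $(E_1^{\bullet,q},d_1)$ is a complex in the abelian category of pure $\Q$-Hodge structures of weight $q$, and $E_2^{pq}$ inherits a pure Hodge structure of weight $q$.

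First I would make this precise on the whole spectral sequence by realizing \eqref{eq:ss1} (with $A=\Q$) as the weight spectral sequence of a cohomological mixed Hodge complex on $|X_\dt|$. Following Deligne \cite{deligne} (see also \cite{ps}), one takes on each $X_p$ a functorial pair of resolutions computing $(H^*(X_p,\Q);\,H^*(X_p,\C),F)$, assembles them over the simplicial space $X_\dt$ using the coface maps, and pushes forward; the filtration by simplicial degree (suitably d\'ecal\'ee) is the weight filtration $W$, while $F$ comes from the de Rham side. The $E_1$-term of the associated weight spectral sequence is $\bigoplus_p H^q(X_p,\Q)$, i.e.\ \eqref{eq:ss1}, and the skeletal filtration on the abutment is recovered as the topological incarnation of $W$. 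Granting this, Deligne's lemma on two filtrations shows that every page $E_r^{pq}$ with $r\ge 1$ is a pure Hodge structure of weight $q$ and that every $d_r$ is a morphism of Hodge structures.

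With that in hand the degeneration is immediate. For $r\ge 2$ the differential $d_r\colon E_r^{pq}\to E_r^{p+r,\,q-r+1}$ is a morphism from a pure Hodge structure of weight $q$ to one of weight $q-r+1<q$; since the target has no Hodge component in total degree $q$, such a morphism annihilates every $H^{a,b}$ with $a+b=q$ in the source and is therefore zero. Hence $d_r=0$ for all $r\ge 2$ and \eqref{eq:ss1} degenerates at $E_2$.

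The main obstacle is the middle step: promoting the weight-$q$ Hodge structure visible on $E_1$ and $E_2$ to all higher pages simultaneously, together with compatibility of the differentials. This is precisely the content of Deligne's two-filtrations lemma --- the Hodge filtration induced on each term of the weight spectral sequence is strictly compatible with the differentials, so that $({}_WE_r,F)$ is a Hodge structure and $d_r$ respects it. Everything else (the functoriality needed to build the simplicial resolutions of sheaves, and the identification of the constructed filtration with the filtration by skeleta) is routine. Alternatively one may simply invoke the general fact that the weight spectral sequence of a cohomological mixed Hodge complex degenerates at $E_2$ \cite{deligne}, once \eqref{eq:ss1} has been identified as such.
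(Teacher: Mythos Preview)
Your argument is correct: identifying \eqref{eq:ss1} as the weight spectral sequence of a cohomological mixed Hodge complex and then invoking Deligne's two-filtrations lemma (or simply the general $E_2$-degeneration of the weight spectral sequence) is exactly the route taken in \cite[8.1.9]{deligne}, and your weight count $q\ne q-r+1$ for $r\ge 2$ is the standard endgame. The paper, however, deliberately avoids this machinery. As the authors remark just before the proof, the general argument is ``very complicated''; instead they give an elementary proof using only the $\partial\bar\partial$-lemma on each compact K\"ahler $X_p$. Concretely, they realize the spectral sequence via the double complex $(E^\bullet(X_\bullet),d,\pm\delta)$ of $C^\infty$ forms, take a $d$-closed representative $\alpha$ of pure Hodge type $(p,q)$, write $\delta\alpha=\partial\bar\partial\gamma$, and observe that the two choices $\beta=\bar\partial\gamma$ and $\beta=-\partial\gamma$ force $d_2([\alpha])$ to lie in $H^{p-1,q}\cap H^{p,q-1}=0$; the higher $d_r$ die by the same trick iterated. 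Your approach buys generality and a one-line conclusion once the mixed Hodge complex is set up, at the cost of importing that setup; the paper's approach is self-contained and needs nothing beyond classical K\"ahler identities, which is the point of including it.
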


\begin{rmk}
  The theorem follows from a more general result in
  \cite[8.1.9]{deligne}. However the argument is very complicated.
  Fortunately, as pointed out in \cite{dgms}, this special case
  follows easily from the $\dd\db$-lemma.  Here we give a more
  complete argument.
\end{rmk}

\begin{proof}
  It is enough to prove this after tensoring with $\C$.  We can
  realize the spectral sequence as coming from the double complex
  $(E^\dt(X_\dt),d,\pm\delta)$, where $(E^\dt,d)$ is the $C^\infty$ de
  Rham complex, and $\delta$ is the combinatorial differential. (We
  are mostly going to ignore sign issues since they are not relevant
  here.) In fact this is a triple complex, since each $E^\dt(-)$ is
  the total complex of the double complex $(E^{\dt\dt}(-),\dd,\db)$.

  Given a class $[\alpha]\in H^i(X_j)$ lying in the kernel of
  $\delta$, we have $\delta\alpha = d\beta$ for some $\beta\in
  E^{i-1}(X_{j+1})$ Then $d_2([\alpha])$ is represented by
  $\delta\beta\in E^{i-1}(X_{j+2})$.  We will show this vanishes in
  cohomology. The ambiguity in the choice of $\beta$ will turn out to be 
  the key point.

  By the Hodge decomposition, we can assume that $\alpha$ is pure of
  type $(p,q)$. Therefore $\delta\alpha$ is also pure of this type. We
  can now apply the $\dd\db$-lemma \cite[p 149]{gh} to write $\alpha = \dd\db \gamma$
  where $\gamma\in E^{p-1,q-1}(X_{j+1})$. This means we have two
  choices for $\beta$.  Taking $\beta=\db\gamma$ shows that
  $d_2([\alpha])$ is represented by a form of pure type $(p-1,q)$. On
  the other hand, taking $\beta = -\dd\gamma$ shows that this class is
  of type $(p,q-1)$.  Thus $d_2([\alpha]) \in H^{p-1,q}\cap
  H^{p,q-1}=0$.

  By what we just proved $\delta\alpha = d\beta, \delta\beta=d\eta$,
  and $\delta\eta$ represents $d_3([\alpha])$. It should be clear that
  one can kill this and higher differentials in the exact same way.
 
\end{proof}

\begin{cor}\label{cor:degen}
  With the same assumptions as the theorem, the spectral sequence
  \eqref{eq:ss2} degenerates at $E_2$ when $\F=\OO_{X_\dt}$.
\end{cor}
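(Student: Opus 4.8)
The plan is to run the argument of Theorem~\ref{thm:degen} using the Dolbeault complex in place of the full de Rham complex. First I would fix a concrete model for the spectral sequence~\eqref{eq:ss2} with $\F=\OO_{X_\dt}$. For each simplicial degree $p$ the Dolbeault resolution identifies $H^q(X_p,\OO_{X_p})$ with the $\db$-cohomology of the space $E^{0,q}(X_p)$ of global smooth $(0,q)$-forms on $X_p$; since the sheaves of smooth $(0,q)$-forms are fine, \eqref{eq:ss2} is the spectral sequence, filtered by the simplicial degree $p$, of the double complex $(E^{0,\dt}(X_\dt),\db,\pm\delta)$ whose $(p,q)$-term is $E^{0,q}(X_p)$. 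Its $E_1$ page is $\bigoplus_{p}H^q(X_p,\OO_{X_p})$ with $d_1=\delta_*$, and it abuts to the cohomology of the total complex, which is $H^\ast(X_\dt,\OO_{X_\dt})$.

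The key step, which I would establish next, is the following substitute for the $\dd\db$-argument of the theorem. Fix a K\"ahler metric on each $X_p$ and let $\alpha\in E^{0,q}(X_p)$ be harmonic with $[\alpha]\in\ker d_1$, that is, with $\delta\alpha\in E^{0,q}(X_{p+1})$ being $\db$-exact. A harmonic form on a compact K\"ahler manifold is $d$-closed, hence so is $\delta\alpha$, because $\delta$ commutes with $d$. Thus $\delta\alpha$ is $d$-closed and $\db$-exact, so the $\dd\db$-lemma on the compact K\"ahler manifold $X_{p+1}$ lets us write $\delta\alpha=\dd\db\gamma$. But $\delta\alpha$ has pure type $(0,q)$, which would force $\gamma$ to have type $(-1,q-1)$; as there are no such forms, $\delta\alpha=0$.

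Finally I would assemble the conclusion. Every class in $\ker d_1\subseteq E_1^{pq}$ has a harmonic representative $\alpha$, and by the previous step $\delta\alpha$ vanishes identically, so $\alpha$ is already a cocycle of the total complex. Consequently the zig-zags computing $d_2[\alpha],d_3[\alpha],\dots$ can all be chosen to be zero; and since for each $r\ge2$ every element of $E_r^{pq}$ is represented by such a form, every higher differential $d_r$ ($r\ge2$) vanishes. Hence \eqref{eq:ss2} degenerates at $E_2$.

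One can also obtain the statement directly from Theorem~\ref{thm:degen}: the natural surjection of double complexes $(E^\dt(X_\dt),d,\pm\delta)\to(E^{0,\dt}(X_\dt),\db,\pm\delta)$ that kills forms of holomorphic degree $\ge1$ exhibits \eqref{eq:ss2} as the direct summand of \eqref{eq:ss1} of Hodge type $(0,\dt)$ --- which makes sense because the combinatorial differential $\delta$, being a sum of pullbacks along holomorphic maps, preserves Hodge bidegree, so the splitting persists already on $E_1$ --- and this summand inherits degeneration at $E_2$ from \eqref{eq:ss1}. Either way, the one real difficulty is bookkeeping rather than geometry: verifying that \eqref{eq:ss2} is genuinely computed by the Dolbeault double complex (fineness, passage to global sections, and the identification of the total complex with the simplicial hypercohomology $H^\ast(X_\dt,\OO_{X_\dt})$), and making sure that \emph{all} higher differentials are killed, not merely $d_2$ --- which is precisely what the sharpened conclusion $\delta\alpha=0$, in place of $d_2[\alpha]=0$, secures.
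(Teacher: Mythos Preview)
Your alternative argument in the final paragraph --- exhibiting the $\OO_{X_\dt}$-spectral sequence as the Hodge-type $(0,\dt)$ direct summand of the spectral sequence for $\C$ --- is exactly the paper's proof, which consists of a single sentence invoking the Hodge theorem.

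Your main argument is also correct but takes a slightly different route. Rather than appealing to Theorem~\ref{thm:degen}, you rerun the $\dd\db$-lemma directly on the Dolbeault double complex and notice that for forms of pure type $(0,q)$ the conclusion sharpens: since $\dd\db\gamma$ has no $(0,\ast)$-component, one gets $\delta\alpha=0$ on the nose for a harmonic representative, not merely $d_2[\alpha]=0$. This is a small but genuine gain --- harmonic representatives of $E_2$-classes are already cocycles of the total complex, so the vanishing of all higher $d_r$ is immediate, with no zig-zag induction needed. The paper's route is shorter only because it recycles the work already done in Theorem~\ref{thm:degen}; your direct argument is self-contained and in fact proves something marginally stronger about the $E_1$-page.
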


\begin{proof}
  By the Hodge theorem, the spectral sequence for $\F=\OO_{X_\dt}$ is
  a direct summand of the spectral sequence for $\F = \C$.
\end{proof}

 We are, at last, in a position to explain the {\em construction   of the weight
  filtration} for  a complete  variety $X$. Choose a  simplicial
resolution $X_\dt\to X$ as above.  Then
the spectral sequence (\ref{eq:ss1}) will then converge to
$H^*(X,\Q)$ when $A=\Q$.  The weight filtration $W$ is the induced increasing
filtration on $H^*(X)$ indexed so that
$$ W_qH^{p+q}(X)/W_{q-1} = E_\infty^{pq}$$
Although $X_\dt$ is far from unique, Deligne \cite{deligne} shows that
$W$ is well defined, and moreover that this  part of the datum of the canonical  mixed
structure.  By theorem~\ref{thm:degen}, we obtain

\begin{lemma}\label{lemma:weight}
We have $W_{-1}=0$ and
  $$W_jH^i(X,\Q)/W_{j-1}  \cong H^{i-j}(\ldots\to H^j(X_p,\Q)\to H^j(X_{p+1},\Q)\ldots )$$
\end{lemma}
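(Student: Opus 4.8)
The plan is to read the statement off directly from the \emph{definition} of $W$ together with the degeneration statement of theorem~\ref{thm:degen}. Recall that $W$ is by construction the filtration on $H^*(X,\Q)$ induced by the spectral sequence \eqref{eq:ss1} attached to a chosen simplicial resolution $X_\dt\to X$, normalized so that $W_qH^{p+q}(X)/W_{q-1}\cong E_\infty^{pq}$; this makes sense because $|X_\dt|\simeq X$, so \eqref{eq:ss1} converges to $H^*(X,\Q)$. Since $X$ is complete, each term $X_p$ is a smooth complete variety — projective if one wishes to stay literally within the hypotheses — so theorem~\ref{thm:degen} applies and \eqref{eq:ss1} degenerates at $E_2$ with $\Q$-coefficients. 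Hence $E_\infty^{pq}=E_2^{pq}$.

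Next I would identify the $E_2$-term. By the description of the $d_1$-differential recorded just after \eqref{eq:ss2} (the alternating sum of the pullbacks along the face maps $\delta_i$), the row complex $(E_1^{\dt,q},d_1)$ is exactly
\[
\ldots\to H^q(X_{p-1},\Q)\to H^q(X_p,\Q)\to H^q(X_{p+1},\Q)\to\ldots,
\]
and $E_2^{pq}$ is its cohomology in degree $p$. Writing $i=p+q$ and $j=q$, so that $p=i-j$, this gives
\[
W_jH^i(X,\Q)/W_{j-1}\;\cong\;E_2^{i-j,\,j}\;=\;H^{i-j}\bigl(\ldots\to H^j(X_p,\Q)\to H^j(X_{p+1},\Q)\to\ldots\bigr),
\]
which is the asserted isomorphism.

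It remains to check $W_{-1}=0$. A simplicial resolution has no terms in negative simplicial degree, so $E_1^{pq}=H^q(X_p,\Q)$ vanishes unless $p\ge 0$, and it also vanishes whenever $q<0$ since cohomology is concentrated in nonnegative degrees. Hence for every $q\le -1$ the graded piece $W_qH^i(X,\Q)/W_{q-1}\cong E_\infty^{i-q,\,q}$ is zero; as $W$ is bounded below, an induction on $q$ yields $W_qH^i(X,\Q)=0$ for all $q\le -1$, and in particular $W_{-1}=0$.

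I do not expect a genuine obstacle here: the only substantive ingredient is the $E_2$-degeneration of theorem~\ref{thm:degen}, which is already in hand, and everything else is bookkeeping with the indexing conventions. The one point requiring a word of care is that $W$ is independent of the chosen simplicial resolution (so that it is legitimate to compute with one having, say, projective terms), but this is part of Deligne's construction already invoked above.
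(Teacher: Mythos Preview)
Your argument is correct and is precisely the one the paper intends: the lemma is stated immediately after theorem~\ref{thm:degen} with only the phrase ``By theorem~\ref{thm:degen}, we obtain'' as justification, so the paper is also reading off $E_\infty^{pq}=E_2^{pq}$ from degeneration and unwinding the indexing, exactly as you do. Your added remarks about $W_{-1}=0$ and about arranging the $X_p$ to be projective (hence K\"ahler) so that theorem~\ref{thm:degen} applies verbatim are appropriate and do not depart from the paper's approach.
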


\begin{cor}\label{cor:weight}
  $W_jH^i(X)=H^i(X)$ if $j\ge i$ and $W_jH^i(X)=0$ if $i-j>\dim X$.
\end{cor}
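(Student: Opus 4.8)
The plan is to read everything off Lemma~\ref{lemma:weight}, which expresses $\mathrm{gr}^W_j H^i(X,\Q)$ as the cohomology of the complex $(\ldots\to H^j(X_p,\Q)\to H^j(X_{p+1},\Q)\to\ldots)$ placed so that the term $H^j(X_p)$ sits in degree $p$, and the graded piece in question is the cohomology of this complex in degree $i-j$. So the whole statement reduces to two elementary observations about where this complex is nonzero and about the $W_{-1}=0$ normalization.

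First I would handle $j\ge i$. Since the complex computing $\mathrm{gr}^W_j$ lives in cohomological degrees $p\ge 0$, its cohomology vanishes in negative degrees; thus $\mathrm{gr}^W_j H^i(X)=0$ whenever $i-j<0$, i.e. whenever $j>i$. Combined with $W_{-1}=0$ from Lemma~\ref{lemma:weight} and the fact that $W$ is an increasing filtration exhausting $H^i(X)$ (it comes from a convergent spectral sequence on a finite-dimensional cohomology group), a downward induction on $j$ — or simply summing the graded pieces — gives $W_jH^i(X)=\bigoplus_{j'\le j}\mathrm{gr}^W_{j'}H^i(X)=\bigoplus_{j'}\mathrm{gr}^W_{j'}H^i(X)=H^i(X)$ for all $j\ge i$, since every graded piece with $j'>i$ is zero and $H^i(X)$ is the sum of all of them. (Equivalently this is already recorded implicitly in the indexing convention $W_qH^{p+q}/W_{q-1}=E_\infty^{pq}$ together with $E_\infty^{pq}=0$ for $p<0$.)

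For the second assertion, $W_jH^i(X)=0$ when $i-j>\dim X$: I would use that in a simplicial resolution $X_\dt\to X$ each $X_p$ is a smooth variety, and — this is the one genuine input — the construction (via resolution of singularities, e.g. the economical Gu\'illen--Navarro Aznar resolution cited before Theorem~\ref{thm:degen}) can be arranged so that $\dim X_p\le \dim X$ for all $p$; indeed each $X_p$ maps properly to (a subvariety of) $X$, so its image has dimension $\le\dim X$, and one takes $X_p$ to be a resolution of that image, hence of the same dimension. Then $H^j(X_p,\Q)=0$ for $j>2\dim X_p\ge\ldots$; more to the point we only need the bound on the cohomological length: actually here the cleaner route is to bound $p$. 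The term $H^j(X_p)$ contributes to $\mathrm{gr}^W_j$ in degree $p$, so $\mathrm{gr}^W_jH^i(X)$ is a subquotient of $H^j(X_{i-j})$, which is zero as soon as $i-j>\dim X_{i-j}$... wait, that gives $2\dim X$. Let me instead argue directly: $\mathrm{gr}^W_j H^i(X)$ is a subquotient of $E_1^{i-j,j}=\bigoplus H^j((X_{i-j})^{\alpha})$, and each connected smooth piece $(X_{i-j})^{\alpha}$ has complex dimension $\le\dim X$, so $H^j$ of it vanishes for $j>2\dim X$; this is weaker than claimed. The sharp bound instead comes from bounding the simplicial degree: one can choose the simplicial resolution with $X_p=\emptyset$ for $p>\dim X$ (this is exactly the content of the "fairly economical resolution" of \cite{gnpp}, where the cohomological/simplicial amplitude is controlled by the dimension), so $E_1^{pq}=0$ for $p>\dim X$, hence $\mathrm{gr}^W_jH^{p+q}$ with $p=i-j>\dim X$ vanishes, giving the claim.

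The main obstacle is precisely this last point: verifying that the simplicial resolution can be truncated at level $\dim X$, equivalently that the Gu\'illen--Navarro Aznar construction produces $X_p=\emptyset$ for $p>\dim X$. Once that dimensional bound on the simplicial resolution is in hand, both parts of the corollary are immediate from the description of $\mathrm{gr}^W$ in Lemma~\ref{lemma:weight}; so I would either cite the relevant statement from \cite{gnpp} or \cite{ps} directly, or, if I wanted to keep the argument self-contained, note that one only needs \emph{some} resolution with $E_1^{pq}=0$ for $p>\dim X$ and construct it by induction on dimension in the style of Example~\ref{ex:isolated}, where resolving an isolated singularity contributes simplices only up to the dimension of the exceptional divisor.
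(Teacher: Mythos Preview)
Your argument is correct and matches the paper's own proof: the first part follows directly from Lemma~\ref{lemma:weight} via the vanishing of $\mathrm{gr}^W_j$ for $j>i$, and the second part uses that a simplicial resolution of length at most $\dim X$ exists (the paper cites \cite{gnpp} and \cite[thm~5.26]{ps} for exactly this). You might tighten the exposition by deleting the false start with $\dim X_p$ and going straight to the bound on the simplicial length, but the substance is the same.
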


\begin{proof}
  The first part is an  immediate consequence of the lemma. For
  the second,  we observe that the work of Guillen, Navarro Aznar
et. al. \cite{gnpp},\cite[thm 5.26]{ps}, shows that a  simplicial
resolution $X_\dt$ can be chosen with length at most $\dim X$.
So that $H^{i-j}(H^j(X_\dt))$ is necessarily $0$ for $i-j>\dim X$.
\end{proof}

Applying $\pi_0$ to $X_\dt$ results in a simplicial set. We have a
canonical map $X_\dt\to \pi_0(X_\dt)$ of simplicial spaces which
induces a continuous map of 
$$X\sim |X_\dt|\to |\pi_0(X_\dt)|$$
which is well defined up to homotopy.

\begin{cor} \label{inclusion}
The map on cohomology is injective and
  $$W_0H^i(X,\Q)  =\im H^i(|\pi_0(X_\dt)|,\Q)$$
\end{cor}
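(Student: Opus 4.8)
The plan is to deduce the corollary from Lemma~\ref{lemma:weight} by analyzing the lowest row of the spectral sequence \eqref{eq:ss1}. Write $\Sigma = |\pi_0(X_\dt)|$. By Lemma~\ref{lemma:weight} we have an isomorphism
$$
W_0 H^i(X,\Q) \;\cong\; H^i\bigl(\,p \mapsto H^0(X_p,\Q)\,\bigr),
$$
i.e. $W_0 H^i(X)$ is the $i$-th cohomology of the complex obtained by applying $H^0(-,\Q)$ degreewise to $X_\dt$. Now $H^0(X_p,\Q)$ is just the space of $\Q$-valued functions on $\pi_0(X_p)$, so this complex is precisely the simplicial cochain complex computing $H^i(|\pi_0(X_\dt)|,\Q) = H^i(\Sigma,\Q)$. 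This already gives the asserted equality of dimensions; the point of the corollary, however, is to identify this abstract isomorphism with the map induced by the canonical continuous map $X \sim |X_\dt| \to \Sigma$, and to show that map is injective on cohomology.

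First I would set up the comparison of the two spectral sequences induced by the skeletal filtrations. The map of simplicial spaces $X_\dt \to \pi_0(X_\dt)$ is covered, on geometric realizations, by the continuous map $|X_\dt|\to |\pi_0(X_\dt)|$, and this map is filtration-preserving for the skeletal filtrations on both sides. Hence it induces a morphism from the spectral sequence $\tilde E_1^{pq} = H^q(\pi_0(X_p),\Q) \Rightarrow H^{p+q}(\Sigma,\Q)$ of \eqref{eq:ss1} for $\pi_0(X_\dt)$ to the spectral sequence $E_1^{pq} = H^q(X_p,\Q) \Rightarrow H^{p+q}(X,\Q)$. Since $\pi_0(X_p)$ is a discrete set, $\tilde E_1^{pq}$ is concentrated in the row $q=0$, so that spectral sequence degenerates and $H^i(\Sigma,\Q) = \tilde E_2^{i,0}$. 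On the other side, the edge map $H^i(X) \to E_1^{i,0} = H^0(X_i,\Q)$ has image $E_\infty^{i,0} = W_0 H^i(X)$, and the map $\tilde E_1^{p,0} = H^0(\pi_0(X_p),\Q) \to H^0(X_p,\Q) = E_1^{p,0}$ is an isomorphism in each degree because $X_p$ and $\pi_0(X_p)$ have the same connected components. Chasing this: the composite $H^i(\Sigma,\Q) = \tilde E_2^{i,0} \to E_2^{i,0}$ lands in $E_\infty^{i,0} = W_0 H^i(X)$ (all higher differentials out of the bottom row of $E$ are the $d_r$ whose source we are tracking, and the isomorphism on $E_1$ of the bottom rows forces $\tilde E_2^{i,0}\cong E_2^{i,0}$, with the latter surviving to $E_\infty$ by Lemma~\ref{lemma:weight}), and it is an isomorphism onto $W_0 H^i(X)$ precisely because the $E_1$-bottom rows are identified. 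Tracing the edge homomorphisms then shows this isomorphism is exactly the map on cohomology induced by $X \to \Sigma$, which is therefore injective with image $W_0 H^i(X,\Q)$.

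The main obstacle I anticipate is the bookkeeping needed to be sure that $E_2^{i,0}$ genuinely equals $E_\infty^{i,0}$ rather than merely surjecting onto it — i.e. that no differential $d_r\colon E_r^{i,0}\to E_r^{i+r,1-r}$ can be nonzero (trivially, since the target is zero) but also that nothing maps \emph{into} $E_r^{i,0}$, which is automatic as the bottom row is a quotient at each page, so $E_2^{i,0}$ can only shrink. But Lemma~\ref{lemma:weight} already identifies $W_0 H^i(X)\cong H^i(H^0(X_\dt))=E_2^{i,0}$, so in fact the bottom row does not shrink at all past $E_2$. Thus the only real content is matching the abstract identification from Lemma~\ref{lemma:weight} with the geometrically defined map $X\to\Sigma$, which the morphism of spectral sequences above delivers; injectivity is then immediate since an isomorphism onto a subspace is injective. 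A minor point to check is well-definedness up to homotopy of $X\to\Sigma$ (already noted in the text preceding the corollary), so that "the map on cohomology" is unambiguous.
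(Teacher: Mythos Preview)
Your argument is correct and is precisely the intended one: the paper states this as an immediate corollary of Lemma~\ref{lemma:weight} without spelling out a proof, and your comparison of the two skeletal spectral sequences along the map $X_\dt \to \pi_0(X_\dt)$, together with the $E_2$-degeneration supplied by that lemma, is exactly how one fills in the details. One small wording issue: in your ``obstacle'' paragraph you say ``nothing maps \emph{into} $E_r^{i,0}$'' and then immediately ``so $E_2^{i,0}$ can only shrink,'' which is self-contradictory---differentials \emph{do} land in the bottom row, and that is why it can shrink---but you correctly resolve this by invoking Lemma~\ref{lemma:weight} to force $E_2^{i,0}=E_\infty^{i,0}$, so the slip is harmless.
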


\begin{rmk}
When $X$ is a divisor with simple normal crossings, this says that $W_0H^i(X)$ is the
cohomology of the dual complex.  In this case the
inclusion $$H^i(|\Sigma_X|,\Q)\to H^i(X,\Q)$$ \noindent  can be
constructed more directly. It is induced by a simple, but less
canonical simplicial map $\phi: X\to |\Sigma_X|$ described in
Stepanov \cite[lemma 3.2]{step}. Take the triangulation of $X$ such
that components $X_i$ and their intersections $$X_{i_1,i_2,...,i_k} :
= X_{i_1}\cap X_{i_2}\cap \ldots  \cap X_{i_k}$$\noindent
are simpilicial subcomplexes. Denote by $\Delta_{i_1,i_2,...,i_k}$
the simplex in the dual complex $\Sigma_X$ which corresponds to
$X_{i_1,i_2,\ldots,i_k}$.  Then we make a barycentric subdivisions
$\overline{\Sigma}$ of ${\Sigma}$ and  $\overline{\Sigma}_X$ of
${\Sigma}_X$. For any vertex $v$ of  $\overline{\Sigma}$ which lies
in the minimal component $X_{i_1,i_2,\ldots,i_k}$ we
put  $$\phi(v):=\mbox{\rm  the\quad  center \quad of \quad  the \quad
  simplex}\quad  \Delta_{i_1,i_2,...,i_k}$$
See figure 4.
This construction is homotopically equivalent to the construction in Corollary \ref{inclusion}. In particular, it
has connected fibres.
\end{rmk}

\begin{center}
\begin{figure}[ht]
    \centering
     \includegraphics[height=2.5in]{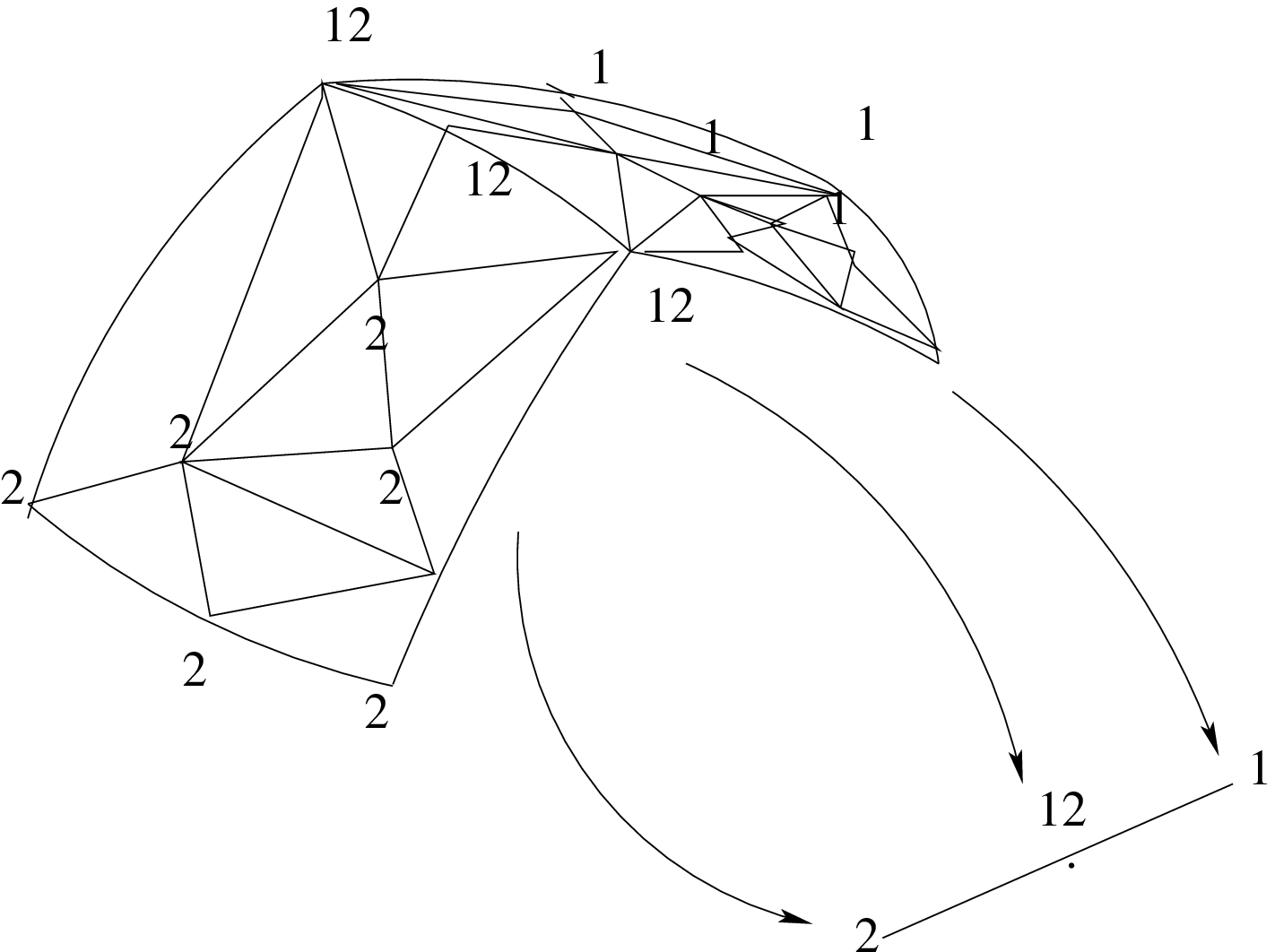}
 \caption{}
  \end{figure}
\end{center}

We can now describe the construction of the weight filtration for an
arbitrary variety $U$. Choose a compactification $X$. Denote the
complement by $\iota:Z\subset X$. There exists
 simplicial resolutions $Z_\dt \to Z$, $X_\dt\to X$ and a morphism $\iota_\dt:Z_\dt\to X_\dt$ covering  $\iota$. Then there is a new smooth simplicial variety $cone(\iota_\dt)$ (\cite[\S 6.3]{deligne}, \cite[IV \S 1.7]{gnpp}) whose geometric realization is 
homotopy equivalent to $X/Z$. We have a spectral sequence
 $$E_1^{pq}= H_c^q(X_p-Z_p, \Q)\Rightarrow H^*_c(X-Z,\Q).$$
 The weight filtration $W$ is defined via this spectral sequence as
 above. Deligne
\cite{deligne} shows that conditions (W1), (W2),  (W3) and (W4)
are satisfied.

For any variety $X$, we can construct a simplicial variety $X_\dt$
with $|X_\dt|$ homotopic to $X$  as in
example \ref{ex:ncd}. It is not a simplicial resolution in general, but
it is dominated by one. If we apply
$\pi_0$ to this simplicial variety, we get a simplicial set $\Sigma_X$
canonically attached to $X$, that we will call the nerve or dual
complex.  There is a canonical map $H^i(|\Sigma_X|,\Q)\to H^i(X,\Q)$
coming from the spectral sequence \eqref{eq:ss1} associated to this
simplicial variety. From the above discussion, we can see that:

\begin{lemma}
  If $X$ is complete, the image $H^i(|\Sigma_X|,\Q)\to H^i(X,\Q)$ lies in $W_0H^i(X,\Q)$.
  If $X$ satisfies the assumptions of example \ref{ex:ncd}, then these
  subspaces coincide.
\end{lemma}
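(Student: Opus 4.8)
The plan is to deduce both statements from Corollary~\ref{inclusion}, which already expresses $W_0H^i(X,\Q)$ as the image of $H^i(|\pi_0(X'_\dt)|,\Q)$ for a \emph{genuine} simplicial resolution $X'_\dt\to X$. The one point requiring care is that the simplicial variety $X_\dt$ of Example~\ref{ex:ncd}, built from the irreducible components $X^i$ of $X$, need not be a simplicial resolution, so Corollary~\ref{inclusion} does not apply to it directly.

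The second assertion is the easy case. If $X$ satisfies the hypotheses of Example~\ref{ex:ncd} in the sense that all the multiple intersections $X^{i_0\ldots i_n}$ are smooth --- for instance $X$ a divisor with simple normal crossings --- then each $X_n=\coprod X^{i_0\ldots i_n}$ is smooth, the closed immersions $X^{i_0\ldots i_n}\hookrightarrow X$ are proper, and by the comment in Example~\ref{ex:ncd1} the augmentation $|X_\dt|\to X$ is a homotopy equivalence. Thus $X_\dt$ is itself a simplicial resolution with $\pi_0(X_\dt)=\Sigma_X$, and Corollary~\ref{inclusion} applied to it gives at once that $H^i(|\Sigma_X|,\Q)\to H^i(X,\Q)$ is injective with image exactly $W_0H^i(X,\Q)$.

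For the first assertion, choose a simplicial resolution $X'_\dt$ together with an augmentation-preserving morphism $\rho_\dt\colon X'_\dt\to X_\dt$ dominating $X_\dt$; such a domination exists, as noted in the text, by resolution of singularities. Applying $\pi_0$ (which is functorial and commutes with the face maps) and then geometric realization produces a commutative square
$$
\xymatrix{
|X'_\dt|\ar[r]^{|\rho_\dt|}\ar[d] & |X_\dt|\ar[d] \\
|\pi_0(X'_\dt)|\ar[r] & |\Sigma_X|
}
$$
whose vertical maps are the canonical collapsing maps. Since $\rho_\dt$ covers $\mathrm{id}_X$ and both augmentations $|X'_\dt|\to X$ and $|X_\dt|\to X$ are homotopy equivalences (the latter again by Example~\ref{ex:ncd1}), $|\rho_\dt|^*$ is an isomorphism on $\Q$-cohomology compatible with the identifications $H^i(|X'_\dt|,\Q)\cong H^i(X,\Q)\cong H^i(|X_\dt|,\Q)$. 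By commutativity, the composite $H^i(|\Sigma_X|,\Q)\to H^i(|X_\dt|,\Q)\xrightarrow{|\rho_\dt|^*}H^i(|X'_\dt|,\Q)$ factors through $H^i(|\pi_0(X'_\dt)|,\Q)$, whose image in $H^i(|X'_\dt|,\Q)$ is $W_0H^i(X,\Q)$ by Corollary~\ref{inclusion}; transporting back along the isomorphism $|\rho_\dt|^*$, the image of $H^i(|\Sigma_X|,\Q)\to H^i(X,\Q)$ is contained in $W_0H^i(X,\Q)$.

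The main obstacle is not in this diagram chase, which is formal, but in the input it rests on: the existence of a dominating simplicial resolution $\rho_\dt\colon X'_\dt\to X_\dt$ compatible with the augmentations to $X$. This is the fact, noted in the text (see \cite{deligne,gnpp}), that $X_\dt$ is dominated by a simplicial resolution --- obtained by resolving $X_\dt$ levelwise and diagonalizing --- and granting it, everything else (naturality of $\pi_0$, commutativity of the square, and the statement that $|\rho_\dt|$ induces an isomorphism on rational cohomology) follows formally.
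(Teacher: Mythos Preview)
Your argument is correct and is exactly the one the paper intends: the lemma is stated without a separate proof, preceded only by ``From the above discussion, we can see that,'' where the discussion records precisely that $X_\dt$ is dominated by a genuine simplicial resolution and that Corollary~\ref{inclusion} identifies $W_0$ with the image of $H^i(|\pi_0(-)|)$. You have simply made this explicit via the naturality square in $\pi_0$ and geometric realization, which is the right way to spell it out.
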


\begin{lemma}
  Let $\pi:\tilde X\to X$ be a resolution of a complete variety
  such that the exceptional divisor $E$ has normal crossings. Let $S=\pi(E)\subset X$. Then
  $\dim W_0H^i(X)$ is the $(i-1)$st Betti number $b_{i-1}$ of the dual
  complex of $E$  when $i> 2\dim(S)+1$.  If $S$ is
  nonsingular, then this holds for $i>1$.  When $i=2\dim(S)+1$, $\dim
  W_0H^i(X) = b_{i-1}$ minus the number of irreducible components of
  $S$ of maximum dimension.
\end{lemma}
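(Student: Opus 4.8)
The plan is to combine Lemma~\ref{lemma:mayer} with the corollaries on the vanishing range of the weight filtration, applied to the resolution $\pi:\tilde X\to X$. First I would feed the data $(\tilde X, S, E)$ into Lemma~\ref{lemma:mayer}, which gives the exact sequence
$$\ldots \to W_0H^{i-1}(\tilde X)\oplus W_0H^{i-1}(S)\to W_0H^{i-1}(E)\to W_0H^i(X)\to W_0H^{i}(\tilde X)\oplus W_0H^{i}(S)\to \ldots$$
(all cohomology here is ordinary, since everything in sight is complete, so $H_c=H$). Since $\tilde X$ is smooth and complete, $W_0H^i(\tilde X)=0$ for $i>0$ by (W2)/Corollary~\ref{cor:weight}; and by Corollary~\ref{cor:weight} applied to $S$ we have $W_0H^i(S)=0$ once $i>\dim S$, hence certainly once $i>2\dim S$. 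So in the range $i>2\dim(S)+1$ both the incoming and outgoing terms $W_0H^{i-1}(\tilde X)\oplus W_0H^{i-1}(S)$ and $W_0H^i(\tilde X)\oplus W_0H^i(S)$ vanish, and the sequence yields an isomorphism $W_0H^{i-1}(E)\xrightarrow{\sim} W_0H^i(X)$. By the Remark following Corollary~\ref{inclusion} (or the Deligne lemma quoted in the introduction), $W_0H^{i-1}(E)=H^{i-1}(|\Sigma_E|,\C)$, whose dimension is $b_{i-1}$ of the dual complex; this gives the first assertion.

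Next I would handle the boundary case $i=2\dim(S)+1$. Here the only term that may fail to vanish is $W_0H^{2\dim S}(S)$, sitting at the end of the relevant stretch:
$$0\to W_0H^{i-1}(E)\to W_0H^i(X)\to W_0H^{2\dim S}(S)\,.$$
Wait — I need $W_0H^{i-1}(E)\to W_0H^i(X)$ to still be injective, which it is because the term $W_0H^{i-1}(\tilde X)\oplus W_0H^{i-1}(S)$ preceding $W_0H^{i-1}(E)$ still vanishes (as $i-1=2\dim S>\dim S$ for $\dim S\ge 1$; the case $\dim S=0$ is covered by the ``$S$ nonsingular'' clause). So $\dim W_0H^i(X) = b_{i-1} + \dim(\text{image in }W_0H^{2\dim S}(S))$ — but this is the wrong sign compared to the statement, so the map $W_0H^{i-1}(E)\to W_0H^i(X)$ must instead have a kernel, not a cokernel contribution; concretely, the preceding map $W_0H^{2\dim S}(S)\to W_0H^{i-1}(E)$ (this is the connecting-type map $W_0H^{i-1}(S)\to W_0H^{i-1}(E)$ in the Mayer--Vietoris sequence of the pair, shifted) can be nonzero, killing part of $W_0H^{i-1}(E)$. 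The content of the claim is then that this map is injective with image of dimension equal to the number of top-dimensional components of $S$: indeed $W_0H^{2\dim S}(S)$ for an arbitrary complete variety $S$ of dimension $d:=\dim S$ has dimension equal to the number of $d$-dimensional irreducible components (apply Corollary~\ref{inclusion}, or Lemma~\ref{lemma:weight}, noting that $H^{2d}$ of the simplicial resolution in top degree only picks up the fundamental classes of the top-dimensional pieces, all of weight $2d$, so $W_0H^{2d}$ has dimension $= \#\{d\text{-dim'l components}\}$ — one checks these classes do survive to $W_0$ via the trace/fundamental-class argument). The remaining point is to verify that the map $W_0H^{2\dim S}(S)\to W_0H^{2\dim S+1-1}(E)$ is injective: this should follow because $W_0H^{2\dim S}(X)=0$ (by Corollary~\ref{cor:weight}, since $2\dim S = i-1 < i \le \dim X$ in the relevant situations — though one must check $2\dim S\ge 1$ so that this really is weight-$0$-trivial, again the $\dim S=0$ case being separate), forcing exactness to make the preceding map onto, hence the one out of $H^{2\dim S}(S)$ injective.

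The case where $S$ is nonsingular is easier: then $W_0H^i(S)=H^i(S)$ only for $i=0$, so $W_0H^i(S)=0$ for all $i>0$, and the argument of the first paragraph applies for all $i>1$ directly, with no boundary case. (For $i=1$ one has instead $W_0H^1(X)=\coker[H^0(\tilde X)\oplus H^0(S)\to H^0(E)]$, which is not being asserted.)

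\emph{Main obstacle.} The substantive point I expect to be delicate is the computation of $W_0H^{2\dim S}(S)$ for a possibly singular, possibly reducible complete $S$ — specifically, showing its dimension equals the number of top-dimensional components and that its image in $W_0H^{2\dim S}(E)$ is injective of that full dimension. This requires understanding how fundamental classes of top-dimensional components sit in the weight filtration of a singular variety (they have the maximal weight $2\dim S$, so a priori they need not lie in $W_0$) and tracking them through the Mayer--Vietoris maps; the cleanest route is probably a direct analysis via Lemma~\ref{lemma:weight} applied to a simplicial resolution of $S$ of length $\le\dim S$, combined with the observation that $H^{2\dim S}$ of that simplicial diagram in its lowest row $H^{2\dim S}(S_0)$ already computes everything, reducing to the smooth case handled by (W2).
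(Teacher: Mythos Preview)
Your overall approach---plug the data into the Mayer--Vietoris sequence of Lemma~\ref{lemma:mayer} and kill the flanking terms using the vanishing range of $W_0$---is exactly what the paper has in mind; its proof is the single sentence ``This follows from lemma~\ref{lemma:mayer}, and the above remarks.'' Your treatment of the range $i>2\dim(S)+1$ and of the nonsingular case is correct and matches this.

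The boundary case $i=2\dim(S)+1$, however, has a genuine gap. You correctly identify that the only possible source of the correction is the image of $W_0H^{2\dim S}(S)\to W_0H^{2\dim S}(E)$ in the sequence. But your claim that $\dim W_0H^{2\dim S}(S)$ equals the number of top-dimensional components of $S$ is false for $\dim S\ge 1$: by the very Corollary~\ref{cor:weight} you invoked in the first paragraph, $W_0H^i(S)=0$ whenever $i>\dim S$, so in particular $W_0H^{2\dim S}(S)=0$ once $\dim S\ge 1$. Your own parenthetical betrays the error: you note that the fundamental classes of the top-dimensional components have weight $2d$, which places them in $Gr^W_{2d}H^{2d}(S)$, the \emph{opposite} end of the weight filtration from $W_0$. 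They do not ``survive to $W_0$''; there is no trace argument that puts a weight-$2d$ class into $W_0$.

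Consequently your exact sequence actually yields $\dim W_0H^i(X)=b_{i-1}$ with \emph{no} correction at $i=2\dim S+1$ whenever $\dim S\ge 1$ (indeed already for all $i>\dim S+1$, a stronger range than the lemma asserts). The subtraction term genuinely appears only when $\dim S=0$, via $W_0H^0(S)=H^0(S)$, as in your final parenthetical on $i=1$. So either the lemma's final clause is overstated, or its correction term arises from some mechanism not visible in the Mayer--Vietoris\,+\,Corollary~\ref{cor:weight} argument; the paper's one-line proof does not clarify which, and your proposed resolution of the ``main obstacle'' does not work.
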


\begin{proof}
  This follows from lemma~\ref{lemma:mayer}, 
and the above remarks. 
\end{proof}

\section{Bounds on $W_0$ of a fibre.}

Suppose that $X$ is a complete variety i.e., proper reduced scheme. Then in addition to the weight
filtration,
$H^i(X,\C)$ carries a second filtration, called the Hodge filtration
induced on the abutment $H^i(X,\Omega_{X_\dt}^\dt)\cong H^i(X,\C)$ of
the spectral sequence (\ref{eq:ss3}) for $\Omega_{X_\dt}^\dt$ for a
simplicial resolution $\pi_{\dt}:X_\dt\to X$. By
convention $F$ is decreasing. We have $F^0= H^i(X,\C)$ and
$$F^0H^i(X,\C)/F^{1} \cong H^i(X_\dt,\OO_{X_\dt})$$

The filtration $W$ induces the same filtration on $ H^i(X_\dt,\OO_{X_\dt})$
as the one coming from (\ref{eq:ss2}). In particular,
\begin{equation}
  \label{eq:1W0F0}
  \begin{split}
W_0 Gr^0_FH^i(X,\C) &=  H^i(\ldots\to H^0(X_p,\OO)\to H^0(X_{p+1},\OO)\ldots )\\
&=H^i(\ldots\to H^0(X_p,\C)\to H^0(X_{p+1},\C)\ldots )\\
&\cong W_0H^i(X,\C)
\end{split}
\end{equation}
This means that Hodge filtrations becomes trivial on $W_0H^i(X)$. So
that this is a vector space and nothing more.

This leads to one of the main theorems of this paper.

\begin{thm}\label{thm:main}
  \begin{enumerate}
  \item[]
  \item[(a)] Suppose that $X$ is a proper (not necessarily reduced) scheme, then there is a canonical inclusion
    $$W_0H^i(X,\C)\hookrightarrow H^i(X,\OO_X),$$
\noindent which is a restriction of the natural map
  $\kappa:H^i(X,\C)\rightarrow H^i(X,\OO_X)$ induced by the morphism of sheaves $\C_X\to \OO_X$.

  \item[(b)] Suppose that  $f:X\to Y$ a proper morphism of varieties. Then there is an
    inclusion $W_0H^i(f^{-1}(y),\C)\hookrightarrow
    (R^if_*\OO_X)_y\otimes \OO_y/m_y$ for each $y\in Y$.
  \end{enumerate}
\end{thm}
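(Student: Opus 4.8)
The plan is to prove part (a) first, for a proper scheme $X$, and then deduce part (b) by applying (a) to the fibre $X_y = f^{-1}(y)$ together with a base-change comparison. For part (a), I would begin by reducing to the reduced case: if $X_{\mathrm{red}}\hookrightarrow X$ is the inclusion, the map $\C_X\to\OO_X$ factors through $\C_{X_{\mathrm{red}}}\to\OO_{X_{\mathrm{red}}}$ after the obvious identifications on the $\C$-side (the underlying topological spaces agree, so $H^i(X,\C)=H^i(X_{\mathrm{red}},\C)$ and $W_0$ is the same), and one has a commuting square relating $H^i(X,\OO_X)$ to $H^i(X_{\mathrm{red}},\OO_{X_{\mathrm{red}}})$. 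It suffices to produce the inclusion for $X_{\mathrm{red}}$ and observe the natural map $\kappa$ for $X$ factors compatibly; so assume $X$ reduced, i.e. a complete variety. Now choose a simplicial resolution $\pi_\dt\colon X_\dt\to X$ as in the previous section. The key computation is already essentially done in equation \eqref{eq:1W0F0}: the edge map of the spectral sequence \eqref{eq:ss3} for $\Omega_{X_\dt}^\dt$ gives $H^i(X,\C)\cong H^i(X,\Omega_{X_\dt}^\dt)\twoheadrightarrow H^i(X_\dt,\OO_{X_\dt})$, and under this map $W_0H^i(X,\C)$ maps isomorphically onto $W_0Gr_F^0 H^i(X,\C)$, which by \eqref{eq:1W0F0} is computed as $H^i$ of the complex $(H^0(X_p,\OO_{X_p}))_p$. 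So the real content is to identify this last group with (a subspace of) $H^i(X,\OO_X)$ compatibly with $\kappa$. For this I would use the augmentation: since $\pi_\dt$ is an augmentation with $|X_\dt|\simeq X$, there is a natural map $\OO_X\to R\pi_{\dt *}\OO_{X_\dt}$ of complexes on $X$, inducing $H^i(X,\OO_X)\to H^i(X_\dt,\OO_{X_\dt})$; the complex $(H^0(X_p,\OO))_p$ is the $E_1$-row $q=0$ piece of \eqref{eq:ss2}, whose $E_\infty$-subquotient at $(p,q)=(i,0)$ is exactly $Gr$ for the Hodge-type filtration on $H^i(X_\dt,\OO_{X_\dt})$, and the bottom piece $E_\infty^{i,0}$ receives a canonical map from $H^i(X,\OO_X)$ via the augmentation together with the fact that $\OO_X\to\ker(\pi_{0*}\OO_{X_0}\to\pi_{1*}\OO_{X_1})$ — sections constant along the simplicial directions. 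Putting these together, $W_0H^i(X,\C)$ sits inside $E_\infty^{i,0}\subseteq H^i(X_\dt,\OO_{X_\dt})$ and this subspace is the image of $H^i(X,\OO_X)$; chasing the diagram identifies the composite $W_0H^i(X,\C)\hookrightarrow H^i(X,\OO_X)$ as a section of $\kappa$ restricted to $W_0$, giving injectivity. The main subtlety I anticipate is the compatibility: one must check that the inclusion produced abstractly from the spectral sequences really is the restriction of the single map $\kappa$ induced by $\C_X\to\OO_X$, not merely some map with the right source and target. This requires comparing the two resolutions — $\C_X\to\Omega_{X_\dt}^\dt$ on the de Rham side and $\OO_X\to\OO_{X_\dt}$ on the coherent side — via the inclusion $\OO_{X_\dt}\hookrightarrow\Omega_{X_\dt}^\dt$ as the degree-zero part, and verifying the resulting square of spectral sequences commutes; this is where the degeneration Corollary \ref{cor:degen} and the splitting $\OO_{X_\dt}\hookrightarrow\C_{X_\dt}$-summand structure (Hodge theorem, as in the proof of that corollary) are used to guarantee that $W$ on the abutment restricts correctly to the $Gr_F^0$ piece.

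For part (b), the strategy is to apply part (a) to $X_y = f^{-1}(y)$, which is a proper scheme (possibly nonreduced — and this is why (a) is stated for nonreduced schemes), obtaining $W_0H^i(X_y,\C)\hookrightarrow H^i(X_y,\OO_{X_y})$. It then remains to relate $H^i(X_y,\OO_{X_y})$ to $(R^if_*\OO_X)_y\otimes\OO_y/m_y$. The natural map here is the base-change morphism $(R^if_*\OO_X)_y\otimes\OO_y/m_y \to H^i(X_y,\OO_{X_y})$; in general this is neither injective nor surjective, so I cannot simply compose. Instead I would argue as follows: after replacing $Y$ by an affine neighborhood of $y$ (which does not change the fibre, its cohomology, or the stalk in question), consider the formal/infinitesimal thickenings $X_n = X\times_Y \mathrm{Spec}(\OO_y/m_y^{n+1})$; by the theorem on formal functions $(R^if_*\OO_X)^\wedge_y = \varprojlim_n H^i(X_n,\OO_{X_n})$, and the transition maps $H^i(X_{n+1},\OO)\to H^i(X_n,\OO)$ together with the surjection $H^i(X_n,\OO_{X_n})\to H^i(X_0,\OO_{X_0}) = H^i(X_y,\OO_{X_y})$ (surjective because the kernel sheaves are quotients of coherent sheaves annihilated by powers of $m_y$, giving vanishing of the relevant $R^{i+1}$-obstructions? — more carefully, one gets a surjection onto the image, which stabilizes). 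Taking the limit, the image of $(R^if_*\OO_X)^\wedge_y \to H^i(X_y,\OO_{X_y})$ equals the image of $H^i(X_1,\OO_{X_1})$, and $(R^if_*\OO_X)^\wedge_y\otimes_{\widehat{\OO}_y} k(y) = (R^if_*\OO_X)_y\otimes k(y)$. The point I actually need is only the inclusion $W_0H^i(X_y,\C)\hookrightarrow(R^if_*\OO_X)_y\otimes k(y)$, and for this it is enough to show the composite $W_0H^i(X_y,\C)\hookrightarrow H^i(X_y,\OO_{X_y})$ lands in the image of the base-change map and that this image is a quotient of $(R^if_*\OO_X)_y\otimes k(y)$ on which the composed map becomes injective — or, cleaner, to produce directly a map $W_0H^i(X_y,\C)\to (R^if_*\OO_X)_y\otimes k(y)$ lifting the one from (a). The honest approach: $W_0H^i(X_y,\C)$, by Corollary \ref{inclusion} applied to $X_y$, is the image of $H^i(|\pi_0((X_y)_\dt)|,\C)$, and a simplicial resolution of $X_y$ can be spread out over a neighborhood of $y$ relative to $f$; then $W_0H^i(X_y,\C)$ is the fibre at $y$ of a local system / the $W_0$-part of $R^if_*\C$ restricted near $y$, which maps to $R^if_*\OO_X$ by the relative version of $\kappa$, and specializing at $y$ gives the claim.

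The step I expect to be the main obstacle is precisely this passage from the absolute statement (a) to the relative statement (b): ensuring that the inclusion $W_0H^i(X_y)\hookrightarrow H^i(X_y,\OO_{X_y})$ from (a), which is intrinsic to the single fibre, is compatible with a global structure over $Y$ so that its target can be replaced by $(R^if_*\OO_X)_y\otimes k(y)$. Concretely the difficulty is that base change $(R^if_*\OO_X)_y\otimes k(y)\to H^i(X_y,\OO_{X_y})$ can fail to be injective, so one must show that the copy of $W_0H^i(X_y,\C)$ produced by (a) actually lies in the image and, more delicately, that it lifts canonically — this is handled by realizing $W_0$ via the dual complex $\pi_0$ of a simplicial resolution that extends over $f$, reducing to the fact that for the relative simplicial resolution the degree-zero coherent $E_1$-term $H^0$ of structure sheaves along fibres glues to $f_*$ of the structure sheaves of the strata, which are finite over $Y$ near $y$, so that base change does hold in that degree-zero row and propagates to the relevant subquotient. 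Once this bookkeeping is set up, part (b) follows formally from part (a).
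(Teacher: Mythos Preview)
Your argument for (a) is essentially correct and agrees with the paper's, though you make it harder than necessary. The paper simply observes that $\kappa:H^i(X,\C)\to H^i(X,\OO_X)$ factors as
\[
H^i(X,\C)\xrightarrow{\kappa} H^i(X,\OO_X)\to H^i(X_{red},\OO_{X_{red}})\to Gr_F^0 H^i(X,\C),
\]
and since $W_0H^i(X,\C)=W_0Gr_F^0H^i(X,\C)$ by \eqref{eq:1W0F0}, the canonical map $W_0\to Gr_F^0$ is already injective; hence so is $\kappa|_{W_0}$. Your discussion of $E_\infty^{i,0}$ and ``sections of $\kappa$'' is circling this same factorization but less directly.

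For (b) you correctly isolate the obstacle --- the base-change map $(R^if_*\OO_X)_y\otimes\OO_y/m_y\to H^i(X_y,\OO_{X_y})$ goes the wrong way and need not be injective --- but your proposed fixes do not work. The transition maps $H^i(X_{n+1},\OO)\to H^i(X_n,\OO)$ are not surjective in general (the obstruction lies in $H^{i+1}$ of a coherent sheaf, which has no reason to vanish), and spreading a simplicial resolution of the fibre over a neighborhood of $y$ is neither straightforward nor does it produce strata finite over $Y$. The paper's trick is much simpler and is precisely why (a) was stated for non-reduced schemes: fix once and for all a simplicial resolution $f_\dt:\X_\dt\to X_y$ of the \emph{reduced} fibre, and for each $n$ run the same spectral-sequence construction with the simplicial sheaf $f_\dt^*\OO_{X_y^{(n)}}$ in place of $\OO_{\X_\dt}$. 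This directly produces compatible maps
\[
s_n:\ W_0H^i(X_y,\C)\longrightarrow H^i\bigl(X_y,\OO_{X_y^{(n)}}\bigr)
\]
without any surjectivity claims. Passing to the inverse limit and invoking formal functions gives $s_\infty$ into $(R^if_*\OO_X)_y^{\wedge}$, and hence a map $s'$ to $(R^if_*\OO_X)_y\otimes\OO_y/m_y$. The punchline is that the further composite of $s'$ with the base-change map back to $H^i(X_y,\OO_{X_y})$ is exactly the injective map $s$ from part (a); therefore $s'$ is injective. You were trying to lift $s$ \emph{backwards} along base change; the paper instead builds the lift \emph{forward} by thickening the target, and uses the injectivity of $s$ only at the very end.
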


\begin{proof}
By \eqref{eq:1W0F0},
$$W_0H^i(X,\C)= W_0 Gr^0_FH^i(X,\C)$$
thus $W_0H^i(X,\C)$ injects into $Gr^0_FH(X,\C)$ under the canonical
map $H^i(X,\C)\to Gr^0_F H^i(X,\C)$.
  Since there is a factorization
$$
\xymatrix{
  H^i(X,\C)\ar^{\kappa}[rr]\ar[rd] &   &H^i(X,\OO_{X}) \ar[ld]\\
   &  Gr^0_FH^i(X,\C) & }
$$
the  restriction of $\kappa$ to $W_0H^i(X,\C)$ is also necessarily
injective. To be clear, we are factoring this as
$$H^i(X,\C)\to H^i(X,\OO_X)\to H^i(X_{red},\OO_{X_{red}})\to Gr_F^0H^i(X,\C)$$

For (b), let $X_y$ be the reduced fibre over $y$, and $X_y^{(n)}$ the fibre
with its $n$th infinitesimal structure. From (a), we have a
natural inclusion $s:W_0H^i(X_y,\C)\hookrightarrow
H^i(X_y,\OO_{X_y})$.  After choosing a simplicial resolution of the fibre
$f_\dt:\X_{\dt}\to X_y$, $s$ can be identified with the composition
$$E^{i0}_2(\C)\to E^{i0}_2(\OO_{\X_\dt})\to H^i(X_y,\OO_{X_y})$$
where the first map is induced by the natural map $\C\to \OO$, and the last
map is the edge homomorphism.  Applying the same construction to
the simplicial sheaf $f_\dt^*\OO_{X_y^{(n)}}$ yields a map $s_n$
fitting into a commutative diagram
$$
\xymatrix{
  W_0H^i(X,\C)\ar^{s}[rr]\ar^{s_n}[rd] &  & H^i(X_y,\OO_{X_y})\\
  & H^i(X_y,\OO_{X_y^{(n)}})\ar[ru] & }
$$
Furthermore, these maps are compatible, thus they assemble  into a map $s_\infty$ to the
limit. Together with the formal functions theorem \cite[III
11.1]{hartshorne}, this yields a commutative diagram
$$
\xymatrix{
  W_0H^i(X,\C)\ar^{s}[rr]\ar^{s_\infty}[d]\ar^{s'}[rrd] &  & H^i(X_y,\OO_{X_y})\\
  \varprojlim H^i(X_y,\OO_{X_y^{(n)}})\ar^{\sim}[r] &
  (R^if_*\OO_X)_y\hat{}\ar[r] & (R^if_*\OO_X)_y\otimes \OO_y/m_y\ar[u]
}
$$
Since $s$ is injective, the map labeled $s'$ is injective as well.
\end{proof}

\begin{cor}\label{cor:main}
  Suppose that $f:X\to Y$ is a resolution of singularities.
  \begin{enumerate}
  \item If $Y$ has rational singularities then
    $W_0H^i(f^{-1}(y),\C)=0$ for $i>0$.
  \item If $Y$ has isolated normal Cohen-Macaulay singularities,
    $W_0H^i(f^{-1}(y),\C)=0$ for $ 0< i < \dim Y-1$
  \end{enumerate}

\end{cor}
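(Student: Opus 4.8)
The plan is to deduce both parts from Theorem~\ref{thm:main}(b), which supplies a canonical injection
$$W_0H^i(f^{-1}(y),\C)\hookrightarrow (R^if_*\OO_X)_y\otimes \OO_y/m_y,$$
so that in each case it is enough to show that $R^if_*\OO_X$ vanishes near $y$ in the stated range of $i$. Part (1) is then immediate: $Y$ has rational singularities precisely when $Y$ is normal and, for any resolution $f$, one has $f_*\OO_X=\OO_Y$ and $R^if_*\OO_X=0$ for all $i>0$; hence $(R^if_*\OO_X)_y=0$ for $i>0$ and Theorem~\ref{thm:main}(b) gives $W_0H^i(f^{-1}(y),\C)=0$.

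For part (2) the real content is the sharper vanishing under the weaker hypothesis: if $(Y,y)$ is an isolated normal Cohen--Macaulay singularity with $n=\dim Y$, and $f:X\to Y$ is a resolution, then $R^if_*\OO_X=0$ for $0<i<n-1$. To prove this I would work locally at $y$ and combine Grauert--Riemenschneider vanishing with Grothendieck duality for the proper map $f$. Since $Y$ is Cohen--Macaulay, its dualizing complex is $\omega_Y[n]$ for a single coherent sheaf $\omega_Y$, and duality gives $Rf_*\omega_X\cong R\mathcal Hom_{\OO_Y}(Rf_*\OO_X,\omega_Y)$; Grauert--Riemenschneider says the left side is the sheaf $f_*\omega_X$ in degree $0$. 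On the right, $\mathcal H^0(Rf_*\OO_X)=\OO_Y$, while for $i\ge 1$ the sheaf $R^if_*\OO_X$ is supported at $y$ and of finite length (because $Y$ is smooth away from $y$), so local duality puts $\mathcal Ext^p_{\OO_Y}(R^if_*\OO_X,\omega_Y)$ in the single degree $p=n$, where it is the Matlis dual $(R^if_*\OO_X)^{\vee}$. In the hyper-$\mathcal Ext$ spectral sequence for $R\mathcal Hom_{\OO_Y}(Rf_*\OO_X,\omega_Y)$ the only nonzero $E_2$ terms are then $\omega_Y$ in total degree $0$ and $(R^if_*\OO_X)^{\vee}$ in total degree $n-i$ for $i\ge 1$. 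Since the abutment is concentrated in degree $0$, and since no differential can enter or leave the $(R^if_*\OO_X)^{\vee}$ slot when $1\le i\le n-2$ (an incoming $d_n$ would come from $\mathcal Hom$ of a finite-length sheaf to $\omega_Y$, which is zero as $\operatorname{depth}\omega_Y=n\ge1$; outgoing differentials land in $\mathcal Ext^{>n}(-,\omega_Y)=0$), we conclude $(R^if_*\OO_X)^{\vee}=0$, hence $R^if_*\OO_X=0$, for $1\le i\le n-2$. Theorem~\ref{thm:main}(b) then finishes part (2) exactly as in part (1).

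The main obstacle is precisely this last step: keeping exact track of which $\mathcal Ext$-sheaves can be nonzero, using both $\operatorname{depth}\OO_{Y,y}=n$ and the fact that $\omega_Y$ has injective dimension $n$, so that the spectral sequence is sparse enough to block all the relevant differentials. It is worth noting that for $i=n-1$ the sheaf $R^{n-1}f_*\OO_X$ need not vanish: there the same computation instead yields the classical sequence $0\to f_*\omega_X\to\omega_Y\to(R^{n-1}f_*\OO_X)^{\vee}\to 0$, which is why the range in (2) is $0<i<n-1$ and no better. One can also argue via the local-cohomology spectral sequence $H^p_y(Y,R^qf_*\OO_X)\Rightarrow H^{p+q}_{f^{-1}(y)}(X,\OO_X)$, where the Cohen--Macaulay hypothesis collapses the $q=0$ row to degree $n$; but that variant still needs the vanishing of $H^q_{f^{-1}(y)}(X,\OO_X)$ for $q<n$, which is most easily obtained from Grauert--Riemenschneider and formal duality on the smooth variety $X$, so the duality argument above seems the more direct one.
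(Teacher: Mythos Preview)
Your reduction of both parts to Theorem~\ref{thm:main}(b) and then to the vanishing of $R^if_*\OO_X$ is exactly what the paper does, and your treatment of part~(1) is identical to the paper's.

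For part~(2) you and the paper both reduce to the statement that $R^if_*\OO_X=0$ for $0<i<n-1$ when $(Y,y)$ is an isolated normal Cohen--Macaulay singularity, but you prove this lemma by a genuinely different route. The paper passes to a projective model of $Y$, invokes Kawamata--Viehweg vanishing $H^i(X,f^*L^{-1})=0$ for $i<n$, uses Serre duality on $Y$ (this is where the CM hypothesis enters) together with Serre vanishing to get $H^i(Y,L^{-1})=0$ for $i<n$ and $L$ sufficiently ample, and then reads off $H^0(R^if_*\OO_X\otimes L^{-1})=0$ for $i<n-1$ from the Leray spectral sequence; since $R^if_*\OO_X$ has zero-dimensional support this finishes. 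Your argument is instead purely local: Grauert--Riemenschneider plus Grothendieck duality identify $f_*\omega_X\cong R\mathcal{H}om_{\OO_Y}(Rf_*\OO_X,\omega_Y)$, and then the hyper-$\mathcal{E}xt$ spectral sequence, whose only nonzero $E_2$ terms are $\omega_Y$ in degree $0$ and $(R^if_*\OO_X)^\vee$ in degree $n-i$, forces the latter to vanish for $1\le i\le n-2$. Both arguments are standard; yours has the advantage of staying local and making the boundary case $i=n-1$ transparent (your short exact sequence $0\to f_*\omega_X\to\omega_Y\to(R^{n-1}f_*\OO_X)^\vee\to 0$ is a nice byproduct), while the paper's avoids setting up relative duality and makes do with absolute vanishing theorems on a compactification. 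One small comment: in your parenthetical about the incoming differential, the relevant map into the slot $(n,-i)$ is $d_{i+1}$ from $\mathcal{E}xt^{\,n-i-1}(\OO_Y,\omega_Y)$, which vanishes because $\omega_Y$ is a module rather than by the depth argument you state; but since the $E_2$ page is so sparse (only $(0,0)$ and the column $p=n$ are nonzero) the only possible nonzero differential anywhere is $d_n\colon\omega_Y\to(R^{n-1}f_*\OO_X)^\vee$, so your conclusion stands.
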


\begin{proof}
  There is no loss in assuming that the exceptional divisor has normal
  crossings. Then the first statement is an immediate consequence of the theorem.  The
  second follows from the well known fact given below. We sketch the
  proof for lack of a suitable reference.

\begin{prop}\label{prop:CM}
  If $f:X\to Y$ is a resolution of a variety with isolated normal
  Cohen-Macaulay singularities, then $R^if_*\OO_X = 0$ for $0<i<\dim
  Y-1$
\end{prop}

\begin{proof}[Sketch]
  We can assume that $Y$ is projective. By the Kawamata-Viehweg
  vanishing theorem \cite{kawamata,viehweg}
  \begin{equation}
    \label{eq:kV}
    H^i(X, f^*L^{-1})=0,\quad i < \dim Y = n, 
  \end{equation}
  where $L$ is ample.  Replace $L$ by $L^N$, with $N\gg 0$.  Then by
  Serre vanishing and Serre duality (we use the CM hypothesis here)
  \begin{equation}
    \label{eq:SV}
    H^i(Y, L^{-1}) = H^{n-i}(Y, \omega_Y\otimes L) = 0,\quad i < n.  
  \end{equation}
  The Leray spectral sequence together with~\eqref{eq:kV} and
  \eqref{eq:SV} imply
$$ H^0(R^if_*\OO_X\otimes L^{-1}) = 0,\quad i< n-1$$
Since the sheaves $R^if_*\OO_X$ have zero dimensional support, the
proposition follows.
\end{proof}

\end{proof}

Corollary \ref{cor:main} refines \cite[prop 3.2]{ishii}.
Concerning the first conjecture, we have the following partial result
in the rational homotopy category.

\begin{cor}
The rational homotopy type of the dual complex associated to a resolution of an isolated rational hypersurface 
singularity of dimension $\geq 3$ is trivial.
\end{cor}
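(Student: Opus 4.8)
The plan is to combine the rational acyclicity of the dual complex supplied by Corollary~\ref{cor:main}(1) with a simple connectivity statement, and then invoke the elementary fact that a simply connected, rationally acyclic CW complex is rationally contractible. So let $(Y,y)$ be the given isolated rational hypersurface singularity with $n=\dim Y\ge 3$, let $f:X\to Y$ be a resolution, and — as in the proof of Corollary~\ref{cor:main} — assume without loss that the exceptional fibre $E=f^{-1}(y)$ is a divisor with simple normal crossings; write $\Sigma=\Sigma_E$ for its dual complex. Since $Y$ is normal, $E$ is connected, so $\Sigma$ is connected. As $Y$ has rational singularities, Corollary~\ref{cor:main}(1) gives $W_0H^i(E,\C)=0$ for all $i>0$, and for the simple normal crossing divisor $E$ this group is $H^i(\Sigma,\C)$; since $H^*(\Sigma,\C)=H^*(\Sigma,\Q)\otimes\C$, this means $\tilde H^*(\Sigma,\Q)=0$, i.e. $\Sigma$ is $\Q$-acyclic.

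Next I would show that $\Sigma$ is simply connected. Let $L$ denote the link of $y$ in $Y$. Since $(Y,y)$ is an isolated hypersurface singularity of complex dimension $n\ge 3$, Milnor's connectivity theorem gives that $L$ is $(n-2)$-connected, hence simply connected. Choose a regular neighbourhood $N$ of $E$ in $X$: it deformation retracts onto $E$, while $N\setminus E$ is identified with a punctured neighbourhood of $y$ in $Y$ and so is homotopy equivalent to $L$. The inclusion $N\setminus E\hookrightarrow N$ is $\pi_1$-surjective because $E$ has real codimension two, so loops, and homotopies between them, can be pushed off $E$ by general position. Finally, the collapsing map $\varphi:E\to\Sigma$ inducing $H^*(\Sigma,\C)\hookrightarrow H^*(E,\C)$ is $\pi_1$-surjective: $\pi_1(\Sigma)$ is generated by edge-loops, and an edge-loop through vertices $v_{i_0},\dots,v_{i_k},v_{i_0}$ lifts to a loop in $E$ obtained by concatenating paths $\gamma_j\subset E_{i_j}$ joining chosen points of $E_{i_{j-1}}\cap E_{i_j}$ and $E_{i_j}\cap E_{i_{j+1}}$ (these intersections are nonempty since the edges are present, and each $E_{i_j}$ is connected), and $\varphi$ sends this loop to one homotopic to the given edge-loop. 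Composing the three maps, $\pi_1(L)$ surjects onto $\pi_1(\Sigma)$, whence $\pi_1(\Sigma)=1$.

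A simply connected CW complex with vanishing reduced rational cohomology has trivial rational homotopy type — its Sullivan minimal model is $\Q$ — so $\Sigma$ is rationally contractible, which is the assertion. Alternatively one can argue via Payne's reinterpretation already quoted in the text: for a Cohen–Macaulay isolated singularity the dual complex is rationally a wedge of $(n-1)$-spheres, which is simply connected here because $n-1\ge 2$, and the number of spheres equals $\dim W_0H^{n-1}(E,\C)$, which vanishes by Corollary~\ref{cor:main}(1).

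I expect the main obstacle to be the simple connectivity step, and within it the two surjectivity claims: that $\pi_1$ of the complement of the exceptional divisor surjects onto $\pi_1$ of a neighbourhood of it, and that the collapse map $E\to\Sigma_E$ is $\pi_1$-surjective. Neither is deep, but the first relies on the general-position argument that is available precisely because $E$ is a divisor, and the second on the explicit lifting of edge-loops using connectedness of the irreducible components of $E$. The geometric inputs making everything work are that $E$ may be taken with simple normal crossings and that the link of an isolated hypersurface singularity of dimension $\ge 3$ is simply connected.
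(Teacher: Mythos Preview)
Your main argument is correct and matches the paper's approach exactly: rational acyclicity of $\Sigma$ from Corollary~\ref{cor:main}(1), simple connectivity of $\Sigma$, and then the standard fact that a simply connected, $\Q$-acyclic CW complex has trivial rational homotopy type. The paper simply cites Stepanov \cite{step} for $\pi_1(\Sigma)=1$, whereas you have essentially reconstructed his proof (Milnor's $(n-2)$-connectedness of the link, the $\pi_1$-surjection across a real-codimension-two divisor, and $\pi_1$-surjectivity of the collapse map). One caution about your closing alternative: invoking Payne's ``rational wedge of spheres'' description to \emph{deduce} simple connectivity is circular as phrased, since statements about rational homotopy type in the Sullivan sense already presuppose simple connectivity (or nilpotence); that route still needs the Stepanov input.
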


\begin{proof}
This follows from the corollary~\ref{cor:main} and  Stepanov's result that the dual complex associated to a resolution 
of an isolated hypersurface singularity of dimension $\geq 3$ is simply connected \cite{step}. 
\end{proof}

\section{Bounds on higher weights of a fibre.}

Theorem~\ref{thm:main} can be refined to get bounds on $W_j$ using ideas
of du Bois \cite{dubois, ps} which we recall below.
Given a simplicial resolution $\pi_\dt:X_\dt\to X$, 
we can construct the  derived direct image
$$\tilde\Omega_{X}^p= \R
\pi_{\dt*}\Omega^p_{X_\dt}$$
In more explicit terms, this can
realized by the total complex of
$$\pi_{0*}G^\dt(\Omega^p_{X_0})\to \pi_{1*}G^\dt(\Omega^p_{X_1})\to\ldots$$
where $G^\dt$ is Godement's flasque resolution. 
These fit together into a bigger complex $\tilde \Omega_X^\dt$ filtered by $p$
$$
 F^p \left\{
\begin{array}{cccc}
 \pi_{0*}G^\dt(\Omega^p_{X_0}) & \to & \pi_{1*}G^\dt(\Omega^p_{X_0}) & \to \\ 
 \downarrow &  & \downarrow &  \\ 
 \pi_{0*}G^\dt(\Omega^{p+1}_{X_0}) & \to & \pi_{1*}G^\dt(\Omega^{p+1}_{X_0}) & \to
\end{array}
\right.
$$
More precisely,
$$(\tilde \Omega_{X}^\dt,F^p)=\R
\pi_{\dt*}(\Omega^\dt_{X_\dt},\Omega^{\ge p}_{X_\dt})$$
in the filtered derived category.   

There is a natural map
$\Omega_X^p\to \tilde \Omega_X^p$ from the $p$th exterior power of the
sheaf of K\"ahler differentials. This is not a quasi-isomorphism in  general.
We summarize the basic  properties:

\begin{enumerate}
\item As objects in the (filtered) derived category 
$(\tilde \Omega_{X}^\dt,F)$ and $\tilde \Omega_X^p$ are  independent of the simplicial resolution.
\item  There is a  map $(\Omega_X^\dt,\Omega_X^{\ge \dt})\to (\tilde\Omega_X^\dt,F)$, from the
  complex of K\"ahler differentials, such that
  composing with $\C\to \Omega_X^\dt$ yields
 a quasi-isomorphism $\C_X\cong \tilde \Omega_X^\dt$.
\item The filtration $F$ on $\tilde \Omega_X^\dt$ induces the Hodge
  filtration on cohomology, and the associated spectral sequence
  degenerates at $E_1$ when $X$ is proper.

\end{enumerate}

From these statements, we extract
$$F^pH^i(X,\C)/F^{p+1} \cong \HH^i(X,\tilde\Omega_{X}^p)$$
when $X$ is proper. If $X\subseteq Z$ is a closed immersion, then
we get a map
$$\Omega_Z^\dt|_X\to \tilde \Omega_X^\dt$$
by composing the map in (2) with  restriction to $\Omega_X^\dt$.

\begin{thm}
  \begin{enumerate}
  \item[]
\item[(a)] Suppose that $X\subseteq Z$ is  a closed immersion of
  proper scheme into another scheme, then there is a
  canonical inclusion
$$H^i(X)/F^{j+1}\hookrightarrow \HH^i(X,\Omega_Z^{\le j}|_X)$$
 \item[(b)] Suppose that $X\subset Z$ is as in (a), then there is a canonical inclusion
$$\dim W_jH^i(X,\C)\hookrightarrow \HH^i(X,\Omega_Z^{\le j}|_X)$$
\item[(c)]  Suppose that  $f:X\to Y$ a proper morphism of varieties. Then there is an
    inclusion $W_jH^i(f^{-1}(y),\C)\hookrightarrow
    (\R^if_*\Omega_X^{\le j})_y\otimes \OO_y/m_y$ for each $y\in Y$
  \end{enumerate}
   
\end{thm}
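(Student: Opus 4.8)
The plan is to mirror the structure of the proof of Theorem~\ref{thm:main}, upgrading the coherent sheaf $\OO$ to the truncated complex $\Omega_Z^{\le j}$ and using the du~Bois complex machinery just recalled. Part (a) is the analytic/sheaf-theoretic heart: for a proper scheme $X$ with a closed immersion $X\subseteq Z$, the isomorphism $F^pH^i(X,\C)/F^{p+1}\cong \HH^i(X,\tilde\Omega_X^p)$ (valid by property (3) since $X$ is proper) assembles, via the filtration $F$, into $H^i(X)/F^{j+1}\cong \HH^i(X,\tilde\Omega_X^{\le j})$, where $\tilde\Omega_X^{\le j}$ denotes the $F$-truncation $\tilde\Omega_X^\dt / F^{j+1}$. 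The map $\Omega_Z^\dt|_X\to\tilde\Omega_X^\dt$ from property (2) is compatible with the stupid/$F$-filtrations, hence induces $\Omega_Z^{\le j}|_X\to \tilde\Omega_X^{\le j}$, and applying $\HH^i(X,-)$ and composing with the isomorphism above gives the claimed map $H^i(X)/F^{j+1}\to \HH^i(X,\Omega_Z^{\le j}|_X)$. The content is that this map is \emph{injective}; I would deduce this exactly as in El~Zein's method and proposition~\ref{prop:proj}: choose an embedded resolution so that $\Omega_Z^{\le j}|_X$ compares to the genuine du~Bois complex, or more directly, factor the map through $\HH^i(X,\tilde\Omega_X^{\le j})$ and observe the composite $H^i(X)/F^{j+1}\xrightarrow{\sim}\HH^i(X,\tilde\Omega_X^{\le j})$ is already an isomorphism, so injectivity of $H^i(X)/F^{j+1}\to\HH^i(X,\Omega_Z^{\le j}|_X)$ follows once one knows $\HH^i(X,\Omega_Z^{\le j}|_X)\to\HH^i(X,\tilde\Omega_X^{\le j})$ is split injective or, at minimum, that the composite $H^i(X)/F^{j+1}\to\HH^i(X,\Omega_Z^{\le j}|_X)\to\HH^i(X,\tilde\Omega_X^{\le j})$ is the identification. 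Actually the cleanest route is: the composite equals the canonical iso, hence the first arrow is injective. (Note the statement of (a) as printed has $\HH^i(X,\Omega_Z^{\le j}|_X)$ as target; it should presumably be $H^i(X)/F^{j+1}\hookrightarrow\HH^i(X,\Omega_Z^{\le j}|_X)$, which is what this argument gives.)

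For part (b), I would run the same factorization argument as in Theorem~\ref{thm:main}(a). We have $W_jH^i(X,\C)\subseteq H^i(X,\C)$ mapping to $H^i(X)/F^{j+1}$, and the point is that $W_j$ injects under this quotient map. This is the analogue of the equality $W_0H^i=W_0Gr_F^0H^i$: one needs that $W_jH^i(X,\C)\cap F^{j+1}H^i(X,\C)=0$. For a mixed Hodge structure on $H^i(X)$, $Gr^W_j$ is pure of weight $j$ and its Hodge filtration lives in degrees $0,\dots,j$; an induction on the weight filtration (using that $F$ is strict with respect to $W$ on a MHS) shows $W_jH^i\cap F^{j+1}H^i=0$ because the Hodge numbers $h^{p,q}$ appearing in $W_j$ all have $p\le j$. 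Compose the resulting injection $W_jH^i(X,\C)\hookrightarrow H^i(X)/F^{j+1}$ with the injection from (a) to get $W_jH^i(X,\C)\hookrightarrow\HH^i(X,\Omega_Z^{\le j}|_X)$. I would present this as a diagram chase exactly parallel to the triangle in the proof of Theorem~\ref{thm:main}.

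For part (c), I would localize the construction in (b) to a fibre and pass to the limit, copying the second half of the proof of Theorem~\ref{thm:main}. Take $X_y$ the reduced fibre, $X_y^{(n)}$ its $n$th infinitesimal neighbourhood inside $X$; since $X$ itself is smooth (or at worst we embed the fibre in $X$ which plays the role of $Z$), $\Omega_X^{\le j}|_{X_y^{(n)}}$ makes sense, and part (b) gives compatible injections $W_jH^i(X_y,\C)\hookrightarrow\HH^i(X_y,\Omega_X^{\le j}|_{X_y^{(n)}})$ for every $n$. These assemble into a map to $\varprojlim_n \HH^i(X_y,\Omega_X^{\le j}|_{X_y^{(n)}})$, which by the formal functions theorem (the hypercohomology version, applied termwise to the finitely many coherent sheaves $\Omega_X^p$, $p\le j$, and to the spectral sequence of the truncated complex) is identified with the completion $(\R^if_*\Omega_X^{\le j})_y\hat{\ }$, and then one maps to $(\R^if_*\Omega_X^{\le j})_y\otimes\OO_y/m_y$. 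Injectivity of the resulting composite $s'$ follows because the leftmost map $s$ (from part (b)) is injective, just as in Theorem~\ref{thm:main}.

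The main obstacle I expect is the injectivity in part (a): unlike the $j=0$ case where $\OO_X\to\tilde\Omega_X^0$ and the Hodge-to-de~Rham degeneration give a clean splitting, for truncated complexes one must be careful that the stupid filtration on $\Omega_Z^\dt|_X$ is compatible with $F$ on $\tilde\Omega_X^\dt$ and that the induced map on $E_1$-pages (which are the $\HH^i(X,\tilde\Omega_X^p)$) does not destroy injectivity after truncation — i.e., one needs the relevant spectral sequences to degenerate and the comparison map to respect the degeneration, which is where properties (2) and (3) of the du~Bois complex do the work, but the bookkeeping is the delicate part. A secondary subtlety is the exact statement: as noted, the target in (a) should be $\HH^i(X,\Omega_Z^{\le j}|_X)$ (the $\HH$ is present) and the injection is of $H^i(X)/F^{j+1}$, not $H^i(X)$ itself; I would make sure the final writeup states all three parts with this correction and with $W_j$ (not $\dim W_j$) as the object being included in (b).
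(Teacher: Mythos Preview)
Your proposal is correct and follows essentially the same line as the paper's proof. For (a) you arrive at exactly the paper's argument: the identification $H^i(X)/F^{j+1}\cong\HH^i(X,\tilde\Omega_X^{\le j})$ factors through $\HH^i(X,\Omega_Z^{\le j}|_X)$, so the first map is injective. For (c) your use of infinitesimal neighbourhoods and the formal functions theorem is identical to the paper's.

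The only noticeable variation is in (b). You deduce $W_jH^i\cap F^{j+1}H^i=0$ from the general structure theory of mixed Hodge structures (the Hodge numbers $h^{p,q}$ contributing to $W_j$ all have $p\le j$, together with strictness of $F$ with respect to $W$). The paper instead invokes its Lemma~\ref{lemma:weight}, the explicit description $Gr^W_jH^i(X,\C)\cong H^{i-j}(\ldots\to H^j(X_k,\C)\to\ldots)$ coming from the $E_2$-degeneration of the simplicial spectral sequence, and reads off $F^{j+1}Gr^W_j=0$ from the fact that each $H^j(X_k)$ has pure Hodge structure of weight $j$. Both arguments are short and yield the same conclusion; yours is slightly more portable, the paper's stays within its simplicial framework. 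Your editorial remarks about the typos in the statement (it should read $W_jH^i$, not $\dim W_jH^i$, in (b)) are well taken.
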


\begin{proof}
By the remarks preceding the theorem, we have an 
 isomorphism
$$H^i(X)/F^{j+1}\cong \HH^i(X,\tilde  \Omega_X^{\le j})$$
For (a) it suffices to observe that this
factors through $\HH^i(X,\Omega_Z^{\le j}|_X)$. So the corresponding map
is injective.

By lemma~\ref{lemma:weight} ,
$$Gr^W_jH^i(X,\C)= H^{i-j}(\ldots\to H^j(X_k,\C)\to H^j(X_{k+1},\C)\ldots )$$
Therefore $F^{j+1}\cap W_jH^i(X)=0$. So that the natural map
$$W_jH^i(X,\C)\to H^i(X)/F^{j+1}$$
is injective. Composing this with the map in (a) yields an injection
$$s:W_jH^i(X,\C)\to \HH^i(X,\Omega_Z^{\le j }|_X)$$
 This proves (b).

The argument for  (c) is basically a reprise of the  proof of
theorem~\ref{thm:main} (b).
Let $X_y$ be the reduced fibre over $y$, and $X_y^{(n)}$ the fibre
with its $n$th infinitesimal structure. 
$$
\xymatrix{
  W_jH^i(X,\C)\ar^{s}[rr]\ar^{s_n}[rd] &  & \HH^i(X_y,\Omega_{X}^{\le j}|_{X_y})\\
  & \HH^i(X_y,\Omega_{X}^{\le j})|_{X_y^{(n)}}\ar[ru] & }
$$
Furthermore, these maps are compatible, thus they pass to map $s_\infty$ to the
limit. Then by the formal functions theorem, this yields a commutative diagram
$$
\xymatrix{
  W_jH^i(X,\C)\ar^{s}[rr]\ar^{s_\infty}[d]\ar^{s'}[rrd] &  &
  \HH^i(X_y,\Omega^{\le j}_{X_y})\\
  \varprojlim \HH^i(X_y,\Omega^{\le j}_{X_y^{(n)}})\ar^{\sim}[r] &
  (\R^if_*\Omega^{\le j}_X)_y\hat{}\ar[r] & (\R^if_*\Omega^{\le j}_X)_y\otimes \OO_y/m_y\ar[u]
}
$$
Since $s$ is injective, the map labeled $s'$ is injective as well.
\end{proof}

\begin{cor}\label{cor:2ndthm}
  \begin{enumerate}
  \item[]
\item With the same assumptions as in (a), we have
 $$\dim W_jH^i(X)\le \sum_{p\le j} \dim
 H^{i-p}(X,\Omega_Z^p|_X)$$
\item With the same assumptions as in (c), we have
 $$\dim W_jH^i(f^{-1}(y)) \le \sum_{p\le j} 
\dim  (R^{i-p}f_*\Omega_X^{p})_y\otimes \OO_y/m_y$$
  \end{enumerate}

\end{cor}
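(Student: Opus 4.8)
The plan is to deduce both inequalities formally from the two inclusions furnished by the preceding theorem, together with a dimension count in the hypercohomology spectral sequence of the stupid filtration on $\Omega^\dt$ (as in \eqref{eq:ss3}); no new geometric input is needed.

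For part (i), part (b) of the theorem gives a canonical inclusion $W_jH^i(X,\C)\hookrightarrow\HH^i(X,\Omega_Z^{\le j}|_X)$, so it suffices to bound $\dim\HH^i(X,\Omega_Z^{\le j}|_X)$. I would run the spectral sequence $E_1^{pq}=H^q(X,\Omega_Z^{p}|_X)\Rightarrow\HH^{p+q}(X,\Omega_Z^{\le j}|_X)$, which is supported in the columns $0\le p\le j$ since $\Omega_Z^{\le j}$ is concentrated there. Over the field $\C$ the abutment in total degree $i$ is filtered with graded pieces $E_\infty^{p,i-p}$, $0\le p\le j$, each a subquotient of $E_1^{p,i-p}$ and hence of dimension at most $\dim H^{i-p}(X,\Omega_Z^{p}|_X)$; summing along the antidiagonal and combining with the inclusion above establishes (i).

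For part (ii) I would carry out the same argument over the base. Part (c) of the theorem gives $W_jH^i(f^{-1}(y),\C)\hookrightarrow(\R^if_*\Omega_X^{\le j})_y\otimes\OO_y/m_y$, so it is enough to bound the $\OO_y/m_y$-dimension of the fibre at $y$ of the coherent sheaf $\R^if_*\Omega_X^{\le j}$. The spectral sequence $E_1^{pq}=R^qf_*\Omega_X^{p}\Rightarrow\R^{p+q}f_*\Omega_X^{\le j}$ of the stupid filtration, again concentrated in columns $0\le p\le j$, equips $\R^if_*\Omega_X^{\le j}$ with a finite filtration by coherent subsheaves whose graded pieces are subquotients of $R^{i-p}f_*\Omega_X^{p}$. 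Localizing at $y$ and applying $-\otimes_{\OO_y}\OO_y/m_y$, the right-exactness of this functor together with Nakayama's lemma ensure that the fibre dimension is subadditive over the steps of the filtration, so that $\dim W_jH^i(f^{-1}(y))\le\sum_{p\le j}\dim\bigl((E_\infty^{p,i-p})_y\otimes\OO_y/m_y\bigr)$, and it remains to bound each summand by $\dim(R^{i-p}f_*\Omega_X^{p})_y\otimes\OO_y/m_y$.

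That last comparison is the step I expect to be the main obstacle. In part (i), over $\C$, it is automatic because a subquotient of a finite-dimensional vector space has no larger dimension; but in part (ii) the graded piece is obtained from $(R^{i-p}f_*\Omega_X^{p})_y$ by alternately passing to kernels and cokernels of the spectral-sequence differentials, and over the local ring $\OO_y$ passing to a submodule can raise the minimal number of generators, i.e. the fibre dimension. Handling this cleanly — for instance by shrinking $Y$, representing $\R f_*\Omega_X^{\le j}$ by a bounded complex of coherent sheaves and estimating the fibres of its cohomology directly — is the only genuinely non-formal ingredient; everything else, namely the existence and naturality of the two inclusions, the shape of the two spectral sequences, and the confinement of the relevant columns to $p\le j$ by the truncation $\Omega^{\le j}$, is immediate from the preceding theorem.
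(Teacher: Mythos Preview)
The paper gives no proof of this corollary; it is stated as an immediate consequence of the preceding theorem. Your argument for part~(1) is exactly the intended one and is correct: the inclusion from part~(b) of the theorem, together with the hypercohomology spectral sequence for the stupid filtration on $\Omega_Z^{\le j}|_X$, gives the bound over $\C$ with no difficulty.

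For part~(2), you have in fact been more careful than the paper. The paper presumably intends the same spectral sequence argument applied to $\R f_*\Omega_X^{\le j}$, and you have correctly identified the one step that is \emph{not} formal: while the filtration on $(\R^if_*\Omega_X^{\le j})_y$ does give the subadditivity
\[
\dim\bigl((\R^if_*\Omega_X^{\le j})_y\otimes\OO_y/m_y\bigr)\ \le\ \sum_{p\le j}\dim\bigl((E_\infty^{p,i-p})_y\otimes\OO_y/m_y\bigr),
\]
bounding each $(E_\infty^{p,i-p})_y\otimes\OO_y/m_y$ by $(R^{i-p}f_*\Omega_X^p)_y\otimes\OO_y/m_y$ is not automatic, because over a local ring a submodule can require more generators than the ambient module (e.g.\ $m_y\subset\OO_y$). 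The paper does not comment on this, and your suggested workaround via a perfect complex representative is plausible in spirit but does not obviously close the gap either. It is worth noting that in the only places the paper \emph{uses} part~(2) --- Propositions~\ref{prop:toricvan} and~\ref{prop:toric} --- one has $R^{i-p}f_*\Omega_X^p=0$ outright for the relevant $p$, so the subtlety never arises in the applications.
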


In special cases, we can use this to get rather precise bounds. We
need the following relative version  of Danilov's theorem
\cite[7.6]{danilov}.

\begin{prop}\label{prop:toricvan}
  Suppose that  $f:X\to Y$ is a projective  toric between toric varieties, with
  $X$ smooth. Then
$R^qf_*\Omega_X^p=0$ for $q>p$.
\end{prop}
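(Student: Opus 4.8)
The plan is to reduce the relative statement to the absolute Danilov vanishing theorem via base change and a local structure argument. First I would reduce to the case where $Y$ is affine, indeed to the case where $Y = \operatorname{Spec}$ of a semigroup ring; the question is local on $Y$, so there is no loss in replacing $Y$ by an affine toric chart. Since $f$ is a toric morphism of toric varieties, the fan of $X$ maps to the fan of $Y$, and over a fixed cone $\sigma_Y$ in the fan of $Y$ the preimage $f^{-1}(U_{\sigma_Y})$ is the toric variety attached to the subfan of rays/cones of $X$ mapping into $\sigma_Y$. Thus it suffices to show $R^qf_*\Omega_X^p = 0$ for $q > p$ when $Y = U_{\sigma_Y}$ is affine toric and $X$ is a smooth toric variety equipped with a projective toric morphism to it.

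Next I would compute the stalk of $R^qf_*\Omega_X^p$ at the distinguished (torus-fixed, or more generally any torus-orbit) point of $Y$ using the theorem on formal functions, exactly as the proof of Theorem~\ref{thm:main}(b) uses it: $(R^qf_*\Omega_X^p)_y\hat{}$ is the inverse limit of $H^q$ of $\Omega_X^p$ restricted to the infinitesimal neighborhoods of the fibre $F = f^{-1}(y)$. The fibre $F$ is itself a complete toric variety (the toric variety of the fan consisting of the cones of $X$ lying over the face $\{0\}\subset\sigma_Y$, or over the relevant orbit), and the issue is to control $H^q(F, \Omega_X^p|_{F^{(n)}})$. Here the key point is that $\Omega_X^p$, being locally free on the smooth toric $X$, restricts on formal neighborhoods to a direct sum of line bundles after decomposing according to the torus action: the $T$-action on $X$ (or at least the residual torus acting on the fibres) gives an eigenspace decomposition of $R^qf_*\Omega_X^p$, and on each graded piece one is looking at the cohomology of an explicit toric sheaf on the complete toric variety $F$.

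Finally, on the complete toric side I would invoke Danilov's theorem \cite[7.6]{danilov} (or the Bott–Steenbrink–Danilov vanishing $H^q(F,\Omega_F^p) = 0$ for $q > p$, together with the standard toric exact sequences relating $\Omega_X^p|_F$ to the $\Omega_F^{p'}$ via the conormal sequence of $F\subset X$) to conclude that each graded piece of $H^q$ vanishes for $q > p$, hence $R^qf_*\Omega_X^p = 0$ for $q > p$. The main obstacle, I expect, is the bookkeeping in the second step: identifying $\Omega_X^p|_{F^{(n)}}$ precisely enough — via the filtration coming from the conormal/ideal sheaf of $F$ in $X$ and the fact that $X$ is smooth so this ideal is generated by a partial system of toric coordinates — so that each associated graded piece is visibly a sum of toric line bundles on $F$ to which absolute Danilov-type vanishing applies. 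Once the sheaf is pinned down torically, the vanishing itself is immediate from \cite{danilov}; organizing the degeneration/base-change so that no higher $\operatorname{Ext}$ or $\lim^1$ obstructions intervene (using, as in Theorem~\ref{thm:main}, that the relevant pro-system is Mittag-Leffler because $f$ is projective) is the remaining technical point.
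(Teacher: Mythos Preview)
Your approach differs substantially from the paper's and has a genuine gap. The paper does not analyse the fibre at all: after reducing to affine $Y$, it \emph{compactifies} $Y$ to a projective toric variety, chooses an ample line bundle $L$ on $Y$, and for $N\gg 0$ uses Serre vanishing to make $R^qf_*\Omega_X^p\otimes L^N$ globally generated and to collapse the Leray spectral sequence to an isomorphism
\[
H^q(X,\Omega_X^p\otimes f^*L^N)\;\cong\; H^0(Y,\, R^qf_*\Omega_X^p\otimes L^N).
\]
The left-hand side then vanishes for $q>p$ by Mavlyutov's vanishing theorem \cite{mavlyutov} (a Bott-type result for nef line bundles on smooth toric varieties), and the proposition follows. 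The argument is exactly parallel to the proof of Proposition~\ref{prop:CM}.

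Your route via formal functions and Danilov vanishing on the fibre $F$ runs into trouble at precisely the step you label ``bookkeeping'', and it is not merely bookkeeping. The conormal sheaf $I_F/I_F^2$ is \emph{not} trivial in general: already for the minimal toric resolution $X\to Y$ of the $A_1$ surface singularity the exceptional $\PP^1$ has conormal bundle $\OO(2)$. Hence the graded pieces of $\Omega_X^p|_F$ (and of its restrictions to the thickenings $F^{(n)}$) have the form $\Omega_F^{p'}$ twisted by nontrivial toric line bundles, and Danilov's theorem, which concerns only the untwisted groups $H^q(F,\Omega_F^{p'})$, says nothing about these. What you actually need on the fibre is again a Bott/Mavlyutov-type statement allowing a line-bundle twist; once you grant that input, the paper's global argument is both shorter and sidesteps the further complications that $Y$ may be singular at $y$, that the scheme-theoretic fibre need not be reduced, and that $F$ need not be smooth.
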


\begin{proof}
The argument is similar to the proof of proposition \ref{prop:CM}.
We can first assume that $Y$ is affine and  then replace it with a
projective toric compactification. Let $L$ be an ample line bundle on
$Y$. After replacing $L$ by $L^N$, with $N\gg 0$, Serre's vanishing
implies that we can assume that
$R^qf_*\Omega_X^p\otimes L$ is globally generated and that
the Leray spectral sequence collapses to an isomorphism
$$H^q(X,\Omega_X^p\otimes f^*L)= H^0(R^qf_*\Omega_X^p\otimes L)$$
The left side vanishes by a theorem of Mavlyutov \cite{mavlyutov}, and
so the proposition follows.
\end{proof}

\begin{prop}\label{prop:toric}
  If $f:X\to Y$ is a projective  toric morphism between toric varieties, then
$W_jH^i(f^{-1}(y)) =0$ when $j<i/2$.
\end{prop}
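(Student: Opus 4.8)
The plan is to estimate $\dim W_jH^i(f^{-1}(y))$ using the du~Bois-type bound of Corollary~\ref{cor:2ndthm}(2) and then to annihilate every term on the right-hand side by the toric vanishing of Proposition~\ref{prop:toricvan}. Assume first that $X$ is smooth, and fix $i,j$ with $j<i/2$; we may assume $j\ge 0$, so that $i\ge 1$. By Corollary~\ref{cor:2ndthm}(2),
$$\dim W_jH^i(f^{-1}(y))\ \le\ \sum_{p\le j}\dim\bigl((R^{\,i-p}f_*\Omega_X^p)_y\otimes\OO_y/m_y\bigr),$$
so it suffices to show $R^{\,i-p}f_*\Omega_X^p=0$ for each $p$ with $0\le p\le j$. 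For such $p$ we have $2p\le 2j<i$, hence $i-p>p\ge 0$, and in particular $i-p\ge 1$. Since $f$ is a projective toric morphism between toric varieties with smooth source, Proposition~\ref{prop:toricvan} gives $R^qf_*\Omega_X^p=0$ for all $q>p$; applying this with $q=i-p$ kills the $p$-th summand. Thus the whole sum vanishes and $W_jH^i(f^{-1}(y))=0$ whenever $j<i/2$.

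To remove the smoothness hypothesis, recall that the estimate underlying Corollary~\ref{cor:2ndthm} is really governed by the du~Bois complex $\tilde\Omega_X^p$ of the total space: via the isomorphism $H^i(X)/F^{j+1}\cong\HH^i(X,\tilde\Omega_X^{\le j})$ and the factorization through the K\"ahler--de~Rham complex, one is reduced to proving $R^qf_*\tilde\Omega_X^p=0$ for $q>p$. Choose a torus-equivariant resolution $g\colon\widetilde X\to X$ with $E=g^{-1}(\operatorname{Sing}X)$ a toric normal crossing divisor. The du~Bois exact triangle
$$\tilde\Omega_X^p\longrightarrow \R g_*\Omega^p_{\widetilde X}\oplus\tilde\Omega^p_{\operatorname{Sing}X}\longrightarrow\tilde\Omega^p_E\longrightarrow\tilde\Omega_X^p[1]$$
reduces $\R f_*\tilde\Omega_X^p$ to $\R(fg)_*\Omega^p_{\widetilde X}$, to which the first paragraph applies directly since $fg\colon\widetilde X\to Y$ is a projective toric morphism with smooth source, together with $\R f_*\tilde\Omega^p_{\operatorname{Sing}X}$ and $\R f_*\tilde\Omega^p_E$, whose underlying spaces are unions of toric subvarieties of strictly smaller dimension still carrying projective toric morphisms to $Y$. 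An induction on $\dim X$ (the reducible pieces being handled by Mayer--Vietoris over their toric components and intersections) then gives $R^qf_*\tilde\Omega_X^p=0$ for $q>p$, and the argument concludes exactly as before.

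The main obstacle is this reduction. One must run the du~Bois formalism inside the toric category --- choosing $g$ and the auxiliary simplicial resolutions of the lower-dimensional strata compatibly with the face maps, as in Section~3 --- and, more delicately, control the Mayer--Vietoris and hypercohomology spectral sequences well enough that, after applying $\R f_*$, the simplicial direction does not drag cohomology into degrees above $p$ (a priori the total complex of objects each concentrated in degrees $\le p$ only gives vanishing in large degree). If one is content with the case of smooth $X$ --- which is the only case used in the sequel, e.g.\ for toroidal resolutions --- the first paragraph is already a complete proof.
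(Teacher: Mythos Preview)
Your first paragraph is exactly the paper's proof: apply Corollary~\ref{cor:2ndthm}(2) and kill each summand $R^{i-p}f_*\Omega_X^p$ via Proposition~\ref{prop:toricvan}, using $i-p>p$ when $p\le j<i/2$. The paper says nothing more than this.

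Note, however, that the paper's own argument (like yours in the first paragraph) only literally works when $X$ is smooth, since Proposition~\ref{prop:toricvan} carries that hypothesis. The paper does not comment on this, presumably because the only use of the proposition in the sequel is for a toric resolution $f:X\to Y$, where $X$ is smooth by construction. So your observation that the smooth case is ``already a complete proof'' and is ``the only case used in the sequel'' is exactly right, and matches what the paper actually establishes.

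Your second and third paragraphs go beyond the paper. The strategy --- replace $\Omega_X^p$ by $\tilde\Omega_X^p$ and run a toric induction via the du~Bois triangle --- is plausible, but as you yourself flag, the simplicial direction of the hyperresolution contributes extra degrees, so vanishing of each $R^qf_*\tilde\Omega^p$ of the pieces in degrees $>p$ does not immediately give the same vanishing for the total object; one would need an additional weight/degeneration argument to prevent degree drift. This is a genuine gap in the general case, not a cosmetic one, and the paper does not attempt to fill it. If you want a clean statement, just add the hypothesis ``with $X$ smooth'' and keep only your first paragraph.
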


\begin{proof}
  We have
$$\dim W_jH^i(f^{-1}(y)) \le \sum_{p\le j} 
\dim  (R^{i-p}f_*\Omega_X^{p})_y\otimes \OO_y/m_y=0$$
when $j<i/2$ 
by proposition \ref{prop:toricvan}.
\end{proof}

\section{Invariants of singularities}

The weight filtration can be  applied to the study of  the
topology of the fibres  of a resolution of singularities  and their links.

\begin{prop}\label{6.1} Let $Y$ be (the germ of) a variety with an  isolated singularity $y\in Y$.
Let $f: X\to Y$ be its resolution.
We can assign to $y$ the following invariants:
$$w^{i}_j(y):=\dim W_jH^i(f^{-1}(y))$$
These are independent of the choice of resolution $X$, where $j<i$.
\end{prop}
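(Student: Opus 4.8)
The plan is to show that the vector spaces $W_jH^i(f^{-1}(y))$, for $j<i$, are canonically attached to the singularity germ $(Y,y)$ and do not depend on the resolution used to compute them. The natural strategy is to reduce the comparison of two resolutions to a single common dominating one, and then to argue that blowing up the exceptional locus does not change these weight spaces in the relevant range. First I would recall that any two resolutions $f_1:X_1\to Y$ and $f_2:X_2\to Y$ (each with exceptional divisor having simple normal crossings, which costs nothing by further blowing up) can be dominated by a third resolution $g:X_3\to Y$ with proper birational maps $p_k:X_3\to X_k$ over $Y$; this is standard. So it suffices to compare the invariants of $X_3$ with those of $X_k$, i.e. to handle the case of a proper birational morphism $p:X'\to X$ between two resolutions of $Y$, both isomorphisms over $Y-\{y\}$.

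The key step is then the following. Let $F=f^{-1}(y)$ and $F'=f'^{-1}(y)$ be the two fibres, with $p:F'\to F$ the induced proper map, which is an isomorphism outside a proper closed subset. Since $p$ is a proper birational morphism of smooth varieties (or can be factored into blow-ups along smooth centers by weak factorization, if one prefers), the fibre $F'$ is obtained from $F$ by a sequence of modifications along centers contained in the exceptional divisor. I would invoke the Mayer--Vietoris-type sequence of lemma~\ref{lemma:mayer} applied to the map $p:F'\to F$ with $S\subseteq F$ the locus over which $p$ is not an isomorphism and $E'=p^{-1}(S)$: this gives a long exact sequence of weight spaces
$$\ldots \to W_jH_c^{i-1}(E')\to W_jH_c^i(F)\to W_jH_c^{i}(F')\oplus W_jH_c^{i}(S)\to \ldots$$
Now $S$ and $E'$ are lower-dimensional, being contained in the exceptional locus of a resolution of an isolated singularity, so by corollary~\ref{cor:weight} their weight-$j$ cohomology vanishes for $i-j$ large; combined with purity-type facts from (W2) one concludes that the map $W_jH^i(F)\to W_jH^i(F')$ is an isomorphism in the range $j<i$. (For compact $F$, $H_c=H$, so the usual cohomology appears.) Running this for both $p_1$ and $p_2$ shows $w^i_j(y)$ computed from $X_1$, $X_2$, $X_3$ all agree.

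The main obstacle, and the place requiring care, is controlling the auxiliary terms $W_jH^*(S)$ and $W_jH^*(E')$ precisely enough: a priori these need not vanish, and the argument must use both that they sit in low cohomological degree relative to their dimension (corollary~\ref{cor:weight}) and that the condition $j<i$ forces the relevant differentials in the exact sequence to land in or emanate from zero groups. An alternative, cleaner route that I would actually pursue is to invoke the refined weak factorization of theorems~\ref{fact}, \ref{fact2} alluded to in the introduction, reducing to a single blow-up $\mathrm{Bl}_C X\to X$ along a smooth center $C$ in the exceptional divisor, compute the effect on weights of such an elementary modification directly (the fibre changes by replacing a projective space bundle neighborhood), and check invariance of $W_j$ for $j<i$ by an explicit count. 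I expect the blow-up computation to be the technical heart; everything else is formal manipulation of the axioms (W1)--(W3) and the spectral sequence of lemma~\ref{lemma:weight}.
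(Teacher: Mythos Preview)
Your approaches are more elaborate than necessary, and the first one has a genuine gap. The paper's proof is essentially a two-line argument that you are circling around without quite landing on.

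The paper first reduces to the case where $Y$ is projective and $U:=Y\setminus\{y\}$ is smooth (by resolving all singularities away from $y$; this does not disturb $f^{-1}(y)$). Then for \emph{any} resolution $f:X\to Y$, the complement $X\setminus f^{-1}(y)$ is canonically isomorphic to $U$. Applying axiom (W3) to the closed inclusion $f^{-1}(y)\subset X$ gives
\[
\ldots \to W_jH^i(X)\to W_jH^i(f^{-1}(y))\to W_jH_c^{i+1}(U)\to W_jH^{i+1}(X)\to\ldots,
\]
and since $X$ is smooth and complete, (W2) forces $W_jH^i(X)=W_jH^{i+1}(X)=0$ whenever $j<i$. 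Hence $W_jH^i(f^{-1}(y))\cong W_jH_c^{i+1}(U)$, and the right-hand side visibly does not depend on $X$. No weak factorization, no domination by a common resolution, no Mayer--Vietoris on fibres.

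Your first approach---Mayer--Vietoris along the fibre map $p:F'\to F$---does not work as stated: the auxiliary loci $S$ and $E'$ are typically singular, so (W2) says nothing about them, and corollary~\ref{cor:weight} only gives $W_jH^i=0$ when $i-j>\dim$, which is far weaker than the condition $j<i$ you need. Your alternative via weak factorization and a single blow-up can be salvaged (then $C$ and the exceptional divisor \emph{are} smooth, so purity applies), but carrying it out on the fibres still leaves a nontrivial surjectivity check in the borderline case $j=i-1$; this is in any event the strategy the paper reserves for the genuinely harder non-isolated situation of Proposition~\ref{6}. The insight you are missing is that one should exploit purity of the ambient smooth complete variety $X$, not of pieces of the fibre: that is what makes the isolated case immediate.
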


\begin{proof} 
We can assume that $Y$ is projective and that $U:=Y\setminus \{y\}$ is
nonsingular by
 resolving all points away from $y\in Y$, if necessary. This will not affect the exceptional fibre $f^{-1}(y)$ of the resolution.
Since $X$ is nonsingular we get $W_j(H^i(X))=W_j(H^{i+1}(X))=0$ for $j<i$. It follows from the long exact sequence 
$$\ldots W_jH_c^i(U)\to W_jH^{i}(X)\to W_jH^{i}(f^{-1}(y))\to W_jH_c^{i+1}(U)\to W_jH^{i+i}(X)\to \ldots$$
that $W_j(H^i(f^{-1}(y))\simeq W_j(H_c^{i+1}(U))$ is an
isomorphism. The latter is independent of the fibre of the resolution
$f: X\to Y$. 

The proposition can also be deduced from  \cite[7.1]{payne}.
\end{proof}

\begin{prop}\label{6} Let $Y$ be a singular variety  and
 $f: X\to Y$ be a resolution.
We can assign to  $y\in Y$ the following invariants, 
$$w_{0}^i(y):=\dim W_0H^i(f^{-1}(y))$$
$$h^1(y):=\dim H^1(f^{-1}(y))$$
independently of the choice of resolution $X$. 
\end{prop}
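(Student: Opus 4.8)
The plan is to reduce, via the weak factorization theorem, to the case of a single blow‑up with smooth centre over $Y$, and then to compare the relevant cohomology of the two fibres by a Mayer--Vietoris argument resting on Lemma~\ref{lemma:mayer}. For the reduction: given two resolutions $f:X\to Y$ and $f':X'\to Y$, there is a smooth variety $W$ with proper birational $Y$‑morphisms onto $X$ and onto $X'$ (for instance a resolution of the closure of the graph of $X\dashrightarrow X'$ in $X\times_Y X'$), and $W\to Y$ is again a resolution; so it suffices to treat $f'=f\circ\pi$ with $\pi:X'\to X$ proper birational, $X$ and $X'$ smooth. The relative form of weak factorization (in which every intermediate model is again a resolution of $Y$, as used elsewhere in this paper) writes $\pi$ as a chain of blow‑ups and blow‑downs along smooth centres; a blow‑down being the inverse of a blow‑up, everything reduces to showing: if $\pi:X'\to X$ is the blow‑up of a smooth variety along a smooth centre $C$, with $f:X\to Y$ and $f'=f\circ\pi$ both resolutions, then for $F=f^{-1}(y)$ and $F'=(f')^{-1}(y)=\pi^{-1}(F)$ one has $\dim W_0H^i(F)=\dim W_0H^i(F')$ for all $i$ and $\dim H^1(F)=\dim H^1(F')$.

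To prove this I would set $\Sigma=(F\cap C)_{\mathrm{red}}$, a closed subvariety of the complete variety $F$; then $\pi$ restricts to an isomorphism $F'-\pi^{-1}(\Sigma)\xrightarrow{\sim}F-\Sigma$, while $\pi^{-1}(\Sigma)=\PP(N_{C/X})\times_C\Sigma$ is a projective bundle over $\Sigma$ with fibre $\PP^{c-1}$, $c=\mathrm{codim}_X C$. The proof of Lemma~\ref{lemma:mayer} uses only (W1)--(W3) together with the properness of the map and the fact that it is an isomorphism over the complement of the given closed set -- smoothness of the source is never used -- so it applies to $\pi|_{F'}:F'\to F$ and yields, for every $j$, an exact sequence
$$\cdots\to W_jH^{i-1}(\pi^{-1}\Sigma)\to W_jH^i(F)\to W_jH^i(F')\oplus W_jH^i(\Sigma)\to W_jH^i(\pi^{-1}\Sigma)\to\cdots$$
(all spaces complete, so compactly‑supported and ordinary cohomology agree). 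The projective bundle formula -- valid compatibly with mixed Hodge structures over an arbitrary base -- gives $\pi^*:W_0H^i(\Sigma)\xrightarrow{\sim}W_0H^i(\pi^{-1}\Sigma)$ for all $i$, and likewise isomorphisms on $H^0$ and $H^1$, the remaining Leray--Hirsch summands lying in degree $\ge 2$. Feeding $j=0$ into the sequence and using these isomorphisms, a short diagram chase shows the sequence splits into short exact sequences $0\to W_0H^i(F)\to W_0H^i(F')\oplus W_0H^i(\Sigma)\to W_0H^i(\pi^{-1}\Sigma)\to0$, whence $\dim W_0H^i(F)=\dim W_0H^i(F')$; the analogous chase on the segment through $H^1(F)$ of the ordinary‑cohomology sequence gives $\dim H^1(F)=\dim H^1(F')$.

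The step I expect to be the real obstacle is the reduction, not the homological bookkeeping: one needs a version of weak factorization that keeps every intermediate model a resolution of $Y$ -- exactly the kind of refinement of factorization alluded to in the introduction -- and one must check that the proof of Lemma~\ref{lemma:mayer} genuinely survives dropping smoothness of the source, which is essential here since $F$, $F'$, $\Sigma$ and $\pi^{-1}(\Sigma)$ are arbitrary (possibly singular, possibly nonreduced) varieties. Granted these two points, the proposition follows.
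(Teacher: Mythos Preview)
Your argument is correct and hews closely to the paper's: both reduce via weak factorization to a single blow-up with smooth centre, and both ultimately rest on the locally trivial bundle formula (Lemma~\ref{lemma:bundle}) to handle the exceptional $\PP^{c-1}$-bundle. The organizational difference is only in the $W_0$ part. The paper compares the ambient smooth varieties: it writes the (W3) long exact sequences for the triples $(X,\,f^{-1}(y),\,U)$ and $(\tilde X,\,\tilde f^{-1}(y),\,\tilde U)$ with $U=X\setminus f^{-1}(y)$, shows that $W_0H^i_c(U)\to W_0H^i_c(\tilde U)$ and $W_0H^i_c(X)\to W_0H^i_c(\tilde X)$ are isomorphisms via a separate blow-up lemma on nonsingular varieties, and finishes with the 5-lemma. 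You instead run the Mayer--Vietoris sequence of Lemma~\ref{lemma:mayer} directly on the possibly singular fibres $F'\to F$; this is a bit more direct and treats both invariants uniformly (for $h^1$ the paper itself already works on the fibres, exactly as you do). The price is that you must note that the proof of Lemma~\ref{lemma:mayer} uses only (W1)--(W3) and properness, never smoothness of the source---which is indeed the case.

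Your two flagged worries are not genuine obstacles. For the first, the standard form of weak factorization already suffices: once you dominate both resolutions by a common one $W$, every intermediate model in the factorization of $W\to X$ (or $W\to X'$) admits a morphism to $W$ or to $X$ and hence to $Y$, so is again a resolution. For the second, as just noted, the diagram chase proving Lemma~\ref{lemma:mayer} survives verbatim for any proper surjection which is an isomorphism off a closed set.
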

 
 \begin{rmk} These invariants generalize the cohomology of the dual graph of the isolated singularities. If $X\to Y$ is a resolution of an isolated singularity $y\in Y$, such that $D=f^{-1}(y)$ is a SNC divisor then the homotopy type of $\Sigma_D$ is independent of the resolution and, 
 in particular, $w_{0}^i(y)=h^i(\Sigma_D)$ are also independent of the resolution in this situation. Thus propositions \ref{6.1}, and \ref{6} generalize the observation for dual complexes to a more general situation. 
\end{rmk}
 
We first prove the following lemmas:

\begin{lemma} \label{prod} 
\begin{enumerate}
\-
\item If $X$ is projective nonsingular and irreducible, $Y$ is an
  arbitrary  variety  and $\pi: X \times Y \to X$ is the projection then the natural homomorphism
$$\pi^*:W_0(H^i(Y))\to W_0(H^i(Y\times X))$$ 
is an isomorphism.
\item If $X$ is projective nonsingular and irreducible with Betti
  number $b_1(X)=0$, $Y$ is an arbitrary  variety  and $\pi: X \times
  Y \to X$ is the  projection then the natural homomorphism
$$\pi^*:H^1(Y)\to H^1(Y\times X)$$ 
is an isomorphism.
\end{enumerate}
\end{lemma}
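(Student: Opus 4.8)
The plan is to reduce both statements to the Künneth formula together with the multiplicativity and strictness properties of the weight filtration. For part (1), the key observation is that for a projective nonsingular irreducible variety $X$ one has $W_0H^0(X,\C) = H^0(X,\C) = \C$ (generated by the class $1$) and, crucially, $W_0H^k(X,\C) = 0$ for all $k>0$, since by (W2) the only nonzero weights on the cohomology of a smooth complete variety in degree $k$ are $k \ge 1 > 0$. Now apply the Künneth decomposition; since $Y$ is complete one may pass to compactly supported cohomology (or argue directly on a simplicial resolution, as $W_0$ is computed from the $E_2^{\bullet 0}$-row of the spectral sequence \eqref{eq:ss1}). Using multiplicativity of weights — property (W4), or more precisely the fact that the weight filtration is compatible with the Künneth isomorphism for complete varieties, which follows from Lemma \ref{lemma:weight} applied to a product of simplicial resolutions — we get
$$
W_0H^i(Y\times X,\C) \;\cong\; \bigoplus_{a+b=i} \big(W_0H^a(Y,\C)\otimes W_0H^b(X,\C)\big) \;=\; W_0H^i(Y,\C)\otimes W_0H^0(X,\C),
$$
and the surviving factor is exactly the image of $\pi^*$. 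Hence $\pi^*$ is an isomorphism onto $W_0$.

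For part (2), the same strategy applies to $H^1$ directly rather than to its weight-zero part. The Künneth formula gives $H^1(Y\times X,\C) \cong H^1(Y,\C)\otimes H^0(X,\C) \;\oplus\; H^0(Y,\C)\otimes H^1(X,\C)$. The first summand is $\pi^*H^1(Y,\C)$; the second summand is $H^0(Y,\C)\otimes H^1(X,\C)$, which vanishes precisely because $b_1(X)=0$. One should note that $X$ irreducible forces $H^0(Y,\C)$ and $H^0(X,\C)$ to behave as expected (the product's $H^0$ is just $H^0(Y)$, and there are no lower-degree cross terms), so no hidden contributions appear. Therefore $\pi^*:H^1(Y,\C)\to H^1(Y\times X,\C)$ is an isomorphism. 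Here we do not even need the weight filtration — plain Künneth suffices — but the statement is placed alongside (1) because it will be used in the same way when controlling $h^1(y)$ in Proposition \ref{6}.

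The main obstacle, and the point that needs the most care, is justifying that the weight filtration is compatible with the Künneth isomorphism in part (1): axiom (W4) is stated for compactly supported cohomology, so one must either invoke it for complete $Y$ (where $H^i_c = H^i$) after a compactification argument, or argue intrinsically via Lemma \ref{lemma:weight} by taking $X_\bullet\times Y_\bullet$ as a simplicial resolution of $X\times Y$ and checking that the $E_2$-term factors as a tensor product of the $E_2$-terms — which is a formal consequence of the Künneth formula for the double complex computing \eqref{eq:ss1}. Either route is routine; the only genuine input is the vanishing $W_0H^{>0}(X,\C)=0$ for $X$ smooth projective, which is immediate from (W2). Everything else is bookkeeping with Künneth.
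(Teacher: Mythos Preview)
Your proposal is correct and follows essentially the same argument as the paper: both parts are reduced to the K\"unneth decomposition together with the vanishing $W_0H^{k}(X)=0$ for $k>0$ (from (W2)) and $W_0H^0(X)\cong\C$ (from irreducibility), and part (2) uses plain K\"unneth with $b_1(X)=0$. If anything, you are more scrupulous than the paper about justifying the compatibility of (W4) with the setting here; the paper simply invokes multiplicativity without further comment.
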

\begin{proof} (1) Note that, since $X$ is projective and nonsingular  we get $W_0(H^k(X))=0$, for $k>0$. Since $X$ is irreducible $W_0(H^0(X))\simeq \C$.
We have

 $W_0(H^i(Y\times X))=\bigoplus_{j+k=i} W_0(H^j(Y)\otimes W_0(H^k(X))=W_0(H^i(Y))\otimes W_0(H^0(X))=$
 
\noindent $W_0(H^i(Y))\otimes \C \simeq W_0(H^i(Y))$

(2) $H^1(Y\times X))= (H^0(Y)\otimes H^1(X))\oplus (H^1(Y)\otimes H^0(X))=H^1(Y)\otimes H^0(X))=H^1(Y)\otimes \C=H^1(Y)$
\end{proof}

\begin{lemma}\label{lemma:bundle} If $E\to Y$ is a Zariski locally trivial bundle with
  fibre $X$ then
\begin{enumerate}
\item if $X$ is nonsingular and projective,
$$\pi^*:W_0(H^i(Y))\to W_0(H^i(E))$$\noindent is an isomorphism.
\item if $b_1(X)=0$,
$$\pi^*:H^1(Y)\to H^1(E)$$\noindent is an isomorphism.
\end{enumerate}
\end{lemma}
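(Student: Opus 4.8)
\textbf{Proof proposal for Lemma~\ref{lemma:bundle}.}

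The plan is to reduce the statement about the Zariski locally trivial bundle $E\to Y$ to the product case handled in Lemma~\ref{prod}, by a Mayer--Vietoris induction on an open cover of $Y$ that trivializes $E$. First I would choose a finite open cover $\{U_\alpha\}$ of $Y$ over which $E$ is trivial, so that $E|_{U_\alpha}\cong U_\alpha\times X$; this uses quasi-compactness of $Y$. Since the assertions concern the functorial map $\pi^*$ and both $W_0$ and $H^1$ are compatible with the Mayer--Vietoris sequences (for $W_0$ this is the weight-exactness in (W3) applied to the complement decompositions, or more simply the strictness of $W$ with respect to the maps in the Mayer--Vietoris sequence coming from the spectral sequence \eqref{eq:ss1}; for $H^1$ it is just the ordinary long exact sequence), one gets a map of long exact sequences comparing the pieces over $Y$ with the pieces over $E$. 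The base case of a single trivializing open set is exactly Lemma~\ref{prod}, after noting that for an open $U\subseteq Y$ the bundle $E|_U\to U$ is the projection $U\times X\to U$, and $\pi^*\colon W_0(H^i(U))\to W_0(H^i(U\times X))$ is an isomorphism by Lemma~\ref{prod}(1) (respectively $H^1(U)\to H^1(U\times X)$ by Lemma~\ref{prod}(2), using $b_1(X)=0$).

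For the inductive step I would write $Y = Y' \cup U_\alpha$ with $Y'$ a union of fewer trivializing opens, and apply the Mayer--Vietoris sequence to the pair $(Y', U_\alpha)$ and to the corresponding pair $(E|_{Y'}, E|_{U_\alpha})$ in $E$, noting that $Y'\cap U_\alpha$ is again covered by trivializing opens and is handled by the same induction. The key point is that $\pi^*$ induces a morphism between these two Mayer--Vietoris sequences, that $\pi^*$ is an isomorphism on the $W_0H^\bullet$ (resp. $H^1$) of $Y'$, of $U_\alpha$, and of $Y'\cap U_\alpha$ by induction, and then the five lemma gives the isomorphism on $Y$. One technical wrinkle: Lemma~\ref{prod} is stated for products with an \emph{arbitrary} variety $Y$, and in the induction the relevant bases are open subvarieties of $Y$, so no projectivity or smoothness of the base is needed and the reduction is legitimate. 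For part (2) one must also keep track of $H^0$ terms in the Mayer--Vietoris sequence, but since $\pi$ induces an isomorphism on $H^0$ (the fibre $X$ being connected, which follows from irreducibility in the projective case, or one simply notes $b_1(X)=0$ does not by itself give connectedness, so one should additionally assume or observe $X$ connected — in the intended applications $X$ is a connected linear fibre, so this is harmless), the five lemma still applies.

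The main obstacle I anticipate is verifying that $\pi^*$ really does commute with the Mayer--Vietoris boundary maps at the level of the \emph{weight-filtered} pieces, i.e. that the comparison is a genuine morphism of exact sequences of $W_0$'s and not merely of the ambient cohomology groups. This requires knowing that the maps in the Mayer--Vietoris sequence are strictly compatible with $W$, equivalently that $W_0$ of a direct sum decomposes and that restriction and the connecting homomorphism are morphisms of mixed Hodge structures; all of this is part of Deligne's theory (and is implicit in properties (W1)--(W4) together with Lemma~\ref{lemma:weight}), so it is a matter of citing it correctly rather than proving something new. A secondary, purely bookkeeping issue is the base case with non-compact $U$: Lemma~\ref{prod} is phrased for ordinary cohomology of $Y\times X$ with $Y$ arbitrary, which covers open $U$, so nothing extra is needed there. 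Once these points are in place the argument is a routine five-lemma induction.
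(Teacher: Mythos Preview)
Your approach is correct but differs from the paper's in a meaningful way. The paper does \emph{not} use a Mayer--Vietoris induction over a trivializing open cover. Instead it argues by induction on $\dim Y$: choose a single dense trivializing open $U\subset Y$, apply Lemma~\ref{prod} to the product $E|_U\cong U\times X$, apply the inductive hypothesis to the lower-dimensional closed complement $Y\setminus U$ (on which $E$ is again a Zariski locally trivial bundle with fibre $X$), and then run the five lemma on the open/closed long exact sequence in compactly supported cohomology coming directly from axiom~(W3). For part~(2) the paper needs an additional check that $H_c^2(U)\to H_c^2(E|_U)$ is an isomorphism, which it gets from K\"unneth together with $b_1(X)=0$ and the observation that $U$ can be chosen with no compact connected components so that $H_c^0(U)=0$.

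The trade-off: the paper's route stays entirely within the axioms (W1)--(W3) as stated, since strict exactness of the open/closed sequence in $H_c$ is precisely (W3). Your route instead needs that the Mayer--Vietoris sequence for an \emph{open} cover is a sequence of mixed Hodge structures (hence strictly compatible with $W$); this is true in Deligne's theory but is not among the paper's listed axioms, so you would have to cite it separately, as you yourself flag. On the other hand, your argument works directly with ordinary cohomology as in the lemma's statement, whereas the paper silently passes to $H_c$ in the proof. Your observation that connectedness of $X$ is needed for the $H^0$ terms in part~(2) applies equally to the paper's argument; in both cases it is harmless because the intended fibres are projective spaces.
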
 
\begin{proof}
(1) We use induction on dimension of $Y$. Let $U\subset Y$ be an open set where the bundle $E_{|U}\simeq U\times X$ is trivial.  By the previous Lemma $$W_0(H^i(U))\to W_0(H^i(E_{|U})=W_0(H^i(U\times X))$$ \noindent is an isomorphism. Also, by the inductive assumption on dimension
$$W_0(H^i(Y\setminus U))\to W_0(H^i(E_{|Y\setminus U}))$$ \noindent is an isomorphism.
Apply the 5 lemma to the diagram  $$
\xymatrix{
\ldots W_0H_c^{i-1}(Y\setminus U)\ar[r]\ar[d]^{\simeq} &W_0H_c^i(U)\ar[r]\ar[d]^{\simeq} & W_0H^i_c(Y)\ar[r]\ar[d] & W_0H_c^i(Y\setminus U)\ar[d]^{\simeq}\ldots& \\ 
 W_0H_c^{i-1}(E_{|Y\setminus U})\ar[r] & W_0H_c^i(E_{|U})\ar[r] & W_0H_c^i(E)\ar[r] & W_0H_c^i(E_{|Y\setminus U})
}
$$
(2) We argue as in (1). We may further assume that $U$  contains no  compact connected components. Thus $H^0_c(U)=0$, and we  get 
$H^2(E_{|U})=H^2(U\times X)=(H_c^2(U)\otimes H_c^0(X))\oplus (H_c^1(U)\otimes H_c^1(X)) \oplus  (H_c^0(U)\otimes H_c^2(X))=H_c^2(U)\otimes H_c^0(X)=
H_c^2(U)\otimes \C\simeq H_c^2(U)$

We use the diagram. $$
\xymatrix{
\ldots H_c^0(Y\setminus U)\ar[r]\ar[d]^{\simeq} & H_c^1(U)\ar[r]\ar[d]^{\simeq} & H^1_c(Y)\ar[r]\ar[d] & H_c^1(Y\setminus U)\ar[r]\ar[d]^{\simeq}& H_c^2(U)\ar[d]^{\simeq} \ldots \\ 
H_c^{0}(E_{|Y\setminus U})\ar[r] & H_c^1(E_{|U})\ar[r] & H_c^1(E)\ar[r] & H_c^1(E_{|Y\setminus U})\ar[r] & H_c^2(E_{|U})
}
$$

\end{proof}
\begin{lemma}  If $f:X\to Y$ is a blow-up of a smooth centre on the nonsingular but not necessarily complete variety $Y$ then $f^*:W_0(H^i(Y))\to W_0(H^i(X))$ is an isomorphism.

\end{lemma}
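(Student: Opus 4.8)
The plan is to reduce to the complete case and then compare the two varieties via a blow-up of a smooth center that is compactly supported away from the boundary. First I would choose a smooth compactification $\bar Y$ of $Y$ with SNC boundary $\bar Y - Y$, and arrange — by taking a further blow-up — that the closure of the smooth center $C\subset Y$ meets the boundary transversally, so that the blow-up $f:X\to Y$ extends to a blow-up $\bar f:\bar X\to \bar Y$ along a smooth center $\bar C$ meeting $\bar Y-Y$ transversally, with $\bar X - X$ still SNC. Then $\bar f^*: W_0H^i(\bar Y)\to W_0H^i(\bar X)$ and (since $\bar Y, \bar X$ are smooth complete) $\bar f^*: H^i(\bar Y)\to H^i(\bar X)$ is the standard blow-up formula: an isomorphism onto $W_i H^i$, but more relevantly, on $W_0$ it is an isomorphism because the extra cohomology introduced by a blow-up along $\bar C$ is supported in weights $\ge 2$ (the exceptional divisor is a projective bundle over $\bar C$, contributing classes of weight $\ge 2$, and $W_0$ of a projective bundle over anything equals $W_0$ of the base by Lemma~\ref{lemma:bundle}(1)). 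So the complete case of the lemma follows directly from Lemma~\ref{lemma:bundle}.

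Next I would pass from the complete case to the general (noncomplete, nonsingular) case using axiom (W3) and the long exact sequences of the pairs. Write $Z = \bar Y - Y$ and $E_{\bar X} = \bar X - X = \bar f^{-1}(Z)$, which is SNC since $\bar C$ meets $Z$ transversally. Since $\bar f$ restricts to an isomorphism over $\bar Y - (Z\cup C)$ and over the complement of the center it is the identity, the map $\bar f$ carries $E_{\bar X}$ to $Z$ and is compatible with the open-closed decompositions. I would then apply Lemma~\ref{lemma:mayer} (or directly the (W3) long exact sequences for $Y\subset \bar Y$ and $X\subset\bar X$) to get a morphism of long exact sequences
$$
\xymatrix{
W_0H_c^{i-1}(Z)\ar[r]\ar[d]^{\simeq} & W_0H^i(Y)\ar[r]\ar[d] & W_0H^i(\bar Y)\ar[r]\ar[d]^{\simeq} & W_0H_c^i(Z)\ar[d]^{\simeq}\\
W_0H_c^{i-1}(E_{\bar X})\ar[r] & W_0H^i(X)\ar[r] & W_0H^i(\bar X)\ar[r] & W_0H_c^i(E_{\bar X})
}
$$
Here the outer vertical maps are isomorphisms: the boundary $E_{\bar X}\to Z$ is again a blow-up of a smooth center (the transverse intersection $\bar C\cap Z$, or an iterated such on the strata of the SNC divisor $Z$), so by induction on dimension — the base case $\dim Y$ small being trivial, and $Z$ having strictly smaller dimension — the statement holds for $E_{\bar X}\to Z$; and the middle-right vertical map is an isomorphism by the complete case just established. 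The five lemma then gives that $W_0H^i(Y)\to W_0H^i(X)$ is an isomorphism.

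The main obstacle I anticipate is the very first reduction: arranging that the compactification and the extension of the blow-up are \emph{simultaneously} nice, i.e. that $\bar C$ is smooth, meets $\bar Y - Y$ transversally, and $\bar X - X$ remains SNC, while not disturbing the fact that $f$ itself is unchanged over $Y$. This is a standard but slightly delicate application of embedded resolution/equivariant resolution of singularities (one resolves the pair $(\bar Y, Z\cup \bar C)$ to make everything SNC, then checks the blow-up of the strict transform of $\bar C$ still restricts to $f$ over $Y$); one must be careful that the subsequent blow-ups introduced by this resolution process are isomorphisms over $Y$ so as not to change $X\to Y$. A secondary point requiring care is the identification of the boundary map $E_{\bar X}\to Z$ as a morphism to which the inductive hypothesis genuinely applies — since $Z$ is SNC rather than smooth, one should either phrase the induction for SNC varieties (using the simplicial-resolution description of $W_0$ as cohomology of the dual complex, which is manifestly blow-up invariant) or stratify and induct stratum by stratum. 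Once these set-up issues are dispatched, the cohomological argument is the routine five-lemma chase indicated above.
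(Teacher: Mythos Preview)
Your approach is workable but substantially more circuitous than the paper's. The paper never compactifies: it applies (W3) directly to the open--closed decomposition that $X$ and $Y$ already share. Namely, with $C\subset Y$ the smooth centre and $E\subset X$ the exceptional divisor, one has $U:=Y\setminus C = X\setminus E$, and the two (W3) sequences fit into
\[
\xymatrix{
\ldots W_0H_c^{i-1}(C)\ar[r]\ar[d]^{\simeq} & W_0H_c^i(U)\ar[r]\ar[d]^{=} & W_0H^i_c(Y)\ar[r]\ar[d] & W_0H_c^i(C)\ar[d]^{\simeq}\ldots \\
 W_0H_c^{i-1}(E)\ar[r] & W_0H_c^i(U)\ar[r] & W_0H^i_c(X)\ar[r] & W_0H_c^i(E)
}
\]
The middle vertical arrow is the identity, and the outer ones are isomorphisms by Lemma~\ref{lemma:bundle}(1), since $E\to C$ is a Zariski locally trivial $\PP^k$-bundle. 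The five lemma finishes the proof in one line.

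What this buys compared to your route: there is no need to construct a compactification with a simultaneously smooth extension $\bar C$ transverse to the boundary (the delicate resolution step you flag), and no need for an induction on dimension to handle the SNC boundary $Z$. Both of the obstacles you anticipate are genuine --- in particular, $E_{\bar X}\to Z$ is \emph{not} a blow-up of a smooth variety along a smooth centre, so your inductive hypothesis does not apply as stated and you would indeed have to pass to strata or to the dual-complex description of $W_0$. All of this is avoidable once one notices that the common open set is already sitting inside both $X$ and $Y$.
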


\begin{proof} Let $C\subset Y$ be the smooth centre of the blow-up, and $E$ be the exceptional 
divisor. Then $E\to C$ is a locally trivial bundle with the fibre
isomorphic to ${\PP}^k$, for some $k$. Set $U=Y\setminus C= X\setminus E$.  Consider
the following diagram and use the 5 lemma.

$$
\xymatrix{
\ldots W_0H_c^{i-1}(C)\ar[r]\ar[d]^{\simeq} & W_0H_c^i(U)\ar[r]\ar[d]^{=} & W_0H^i_c(Y)\ar[r]\ar[d] & W_0H_c^i(C)\ar[d]^{\simeq}\ldots \\ 
 W_0H_c^{i-1}(E)\ar[r] & W_0H_c^i(U)\ar[r] & W_0H^i_c(X)\ar[r] & W_0H_c^i(E)
}
$$

\end{proof}

\begin{proof}[Proof of Proposition \ref{6}]
By the Weak Factorization theorem (\cite{wlodarczyk}, \cite{akmw}), any two desingularizations can be connected
by a sequence of blow-ups with smooth centres. Therefore it suffices
to compare  two resolutions $f:X\to Y$, and $\tilde{f}: \tilde{X}\to
Y$, such that $\tilde{X}\to X$ is the blow up along a smooth centre $C$.
Let $U= X\setminus f^{-1}(y)$, $\tilde{U}=\tilde{X}\setminus
\tilde{f}^{-1}(y)$ and let $E$ be the exceptional divisor of $\tilde{X}\to X$.

(1) Consider the diagram
 
 $$
\xymatrix{
\ldots W_0H^{i-1}(f^{-1}(y))\ar[r]\ar[d] & W_0H_c^i(U)\ar[r]\ar[d]^{\simeq} & W_0H^i(X)\ar[r]\ar[d]^{\simeq} & W_0H^i(f^{-1}(y))\ar[d]\ldots \\ 
 W_0H^{i-1}(\tilde{f}^{-1}(y))\ar[r] & W_0H_c^i(\tilde{U})\ar[r] & W_0H_c^i(\tilde{X})\ar[r] & W_0H^i(\tilde{f}^{-1}(y))
}
$$
It folows from lemma \ref{lemma:bundle} that  $W_0H_c^i(U)\to W_0H_c^i(\tilde{U})$ and  $W_0H_c^i(X)\to W_0H_c^i(\tilde{X})$ are isomorphisms.  By the diagram and 5 lemma we get that $$W_0H^i(f^{-1}(y))\to W_0H^i(\tilde{f}^{-1}(y))$$  is an isomorphism.

(2) Consider the diagram
 
 $$
\xymatrix{
\ldots H^{1}(f^{-1}(y)\setminus C)\ar[r]\ar[d]^{=} &H^{1}(f^{-1}(y))\ar[r]\ar[d] & H_c^1(f^{-1}(y)\cap C)\ar[r]\ar[d]^{\simeq}  & H^1(f^{-1}(y)\setminus C))\ar[d]^{=}\ldots \\ 
 H^{0}(\tilde{f}^{-1}(y)\setminus E)\ar[r] & H^{0}(\tilde{f}^{-1}(y))\ar[r]& H_c^1(\tilde{f}^{-1}(y)\cap E)\ar[r]  & H^1(\tilde{f}^{-1}(y)\setminus E)
}
$$
Note that  $\tilde{f}^{-1}(y)\cap E\to f^{-1}(y)\cap C$ is a locally
trivial $\PP^k$-bundle. By the diagram and 5-Lemma we get that $$H^1(f^{-1}(y))\to H^1(\tilde{f}^{-1}(y))$$  is an isomorphism.
\end{proof}

In general, if $j>0$, $\dim W_jH^i(f^{-1}(y))$ may depend upon the resolution $f$.
That is why we extend the above definition in two ways:
$$w_{j}^i(y):=\inf_{f: X\to Y} \dim W_jH^i(f^{-1}(y))$$
$$\bar{w}_{j}^i(y):=\sup_{f: X\to Y} \dim W_jH^i(f^{-1}(y))$$
$$h^{i}(y):=\inf_{f: X\to Y} \dim H^i(f^{-1}(y))$$
$$\bar{h}^{i}(y):=\sup_{f: X\to Y} (\dim H^i(f^{-1}(y))$$
where $f:X\to Y$ varies over all resolutions above.
 It follows immediately from the definition and the previous theorems
 that we have the following properties of the invariants $w_{j}^i(y)$,
 and $\bar{w}_{j}^i(y)$, $h^i(y)$, $\bar{h}^i(y)$.

 \begin{prop} \label{main2}
\-
\begin{enumerate}
 \item Let
 $f: X\to Y$ be a  resolution of $Y$.
Then $$ 0 \leq w_{j}^i(y)\leq \dim W_jH^i(f^{-1}(y))\leq \bar{w}_{j}^i(y)\leq \infty$$
$$ 0 \leq h^{i}(y)\leq \dim H^i(f^{-1}(y))\leq \bar{h}^{i}(y)\leq \infty$$
\item $$0\leq w_{0}^i(y)\leq w_{1}^i(y)\leq \ldots \leq w_{i}^i(y)=h^i(y)=w_{i+1}^i(y)=\ldots,$$ and  
$$0\leq \bar{w}_{0}^i(y)\leq \bar{w}_{1}^i(y)\leq \ldots \leq \bar{w}_{i}^i(y)=\bar{h}^i(y)=\bar{w}_{i+1}^i(y)=\ldots.$$

\item If $g:Y_1\to Y_2$ is a smooth morphism
and $y\in Y_1$ is  a point such that 
$w_{j}^i(y)=\bar{w}_{j}^i(y)$, (respectively $h^i(y)=\bar{h}^i(y)$) then  $$w_{j}^i(g(y))=\bar{w}_{j}^i(g(y))=w_{j}^i(y)=\bar{w}_{j}^i(y)$$ (respectively $$h^i(g(y))=\bar{h}^i(g(y))=h^i(y)=\bar{h}^i(y)).$$

\item 
The equality $w_{j}^i(y)=\bar{w}_{j}^i(y)$ holds if 
\begin{enumerate} \item $y$ is an isolated singularity and $j<i$, or $j=0$,\item  $y$ is arbitrary, and $j=0$, or $i\leq 1$.
\end{enumerate}
\item $w_{j}^i(y)=\bar{w}_{j}^i(y)=0$\quad  if \quad  $i-j  \geq \dim(Y)-1$
\item $h^i(y)=\bar{h}^i(y)$ if $i\leq 1$.
\item $\bar{h}^{i}(y)=\infty$ for   $2\leq i \leq  2\dim_yY-2$
\item $h^i(y)=\bar{h}^i(y)=0$ for $i\geq 2\dim_yY-1$

\end{enumerate}
\end{prop}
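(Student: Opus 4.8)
The plan is to handle the eight assertions in roughly increasing order of difficulty, feeding on the structural results already proved. Assertions (1) and (2) are formal: (1) is just the definition of $w^i_j(y)$ and $\bar w^i_j(y)$ as the infimum and supremum of the single number $\dim W_jH^i(f^{-1}(y))$ over resolutions $f$, and for (2) one notes that $j\mapsto\dim W_jH^i(f^{-1}(y))$ is nondecreasing for each fixed $f$ (an increasing filtration by subspaces), a property inherited by $\inf_f$ and $\sup_f$, with stabilisation at $j=i$ to $\dim H^i(f^{-1}(y))$ since $W_jH^i=H^i$ for $j\ge i$ by Corollary~\ref{cor:weight}. Assertions (5) and (8) are dimension counts: $f^{-1}(y)$ is a proper variety of dimension $\le\dim_yY-1$, so its complex cohomology vanishes in degrees $>2(\dim_yY-1)$, which is (8), and $W_jH^i(f^{-1}(y))=0$ once $i-j>\dim f^{-1}(y)$ by Corollary~\ref{cor:weight}, which gives (5).

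For (4) and (6) I would simply cite the independence results: the case $j=0$ (any $y$) is Proposition~\ref{6}, the case $y$ isolated with $j<i$ is Proposition~\ref{6.1}, and the case $i\le1$ follows from these together with the observation that $\dim H^0(f^{-1}(y))$ equals the number of connected components of the fibre and is resolution-independent, because any two resolutions are dominated by a common one and the fibres of a proper birational morphism onto a smooth (hence normal) variety are connected by Zariski's main theorem. In each of these cases $\dim W_jH^i(f^{-1}(y))$ is constant along resolutions, so $w^i_j(y)=\bar w^i_j(y)$; (6) is the case $i\le1$ of the $h$-analogue of the same discussion, using the $H^1$-assertion of Proposition~\ref{6} for $i=1$.

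For (3), given a smooth $g:Y_1\to Y_2$ and any resolution $f_2:X_2\to Y_2$, the fibre product $X_2\times_{Y_2}Y_1$ is smooth (base change of the smooth $g$ over the smooth $X_2$), and proper and birational over $Y_1$, hence a resolution of $Y_1$ whose fibre over $y$ is canonically $f_2^{-1}(g(y))$. Letting $f_2$ vary gives $w^i_j(y)\le w^i_j(g(y))$ and $\bar w^i_j(g(y))\le\bar w^i_j(y)$; combined with $w^i_j(g(y))\le\bar w^i_j(g(y))$ and the hypothesis $w^i_j(y)=\bar w^i_j(y)$, all four numbers coincide. Replacing $W_jH^i$ by $H^i$ throughout gives the companion statement for $h^i$.

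The only assertion that needs a real construction is (7). For each $N$ and each $i$ with $2\le i\le2\dim_yY-2$ one must build a resolution $f:X\to Y$ with $\dim H^i(f^{-1}(y))\ge N$. Blowing up a smooth point of the exceptional fibre of any resolution inserts a copy of $\PP^{n-1}$ (with $n=\dim_yY$) into the fibre, and iterating this makes the even Betti numbers in the range grow without bound; to reach the odd degrees one further blows up suitable smooth positive-dimensional centres lying inside the exceptional fibre (for instance high-genus curves sitting in one of the inserted $\PP^{n-1}$'s, and higher-dimensional subvarieties with large odd cohomology for the higher odd degrees), each such blow-up inserting an exceptional bundle whose odd cohomology can be taken arbitrarily large. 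The cohomology of the resulting fibre is then computed by the Mayer--Vietoris spectral sequence~\eqref{eq:ss1}. I expect this to be the main obstacle: one has to choose the centres so that every $i$ in the stated range is actually hit, verify that the new classes survive in the spectral sequence rather than cancelling against the cohomology of the loci along which the pieces are glued, and check that $H^0$ and $H^1$ remain unchanged in accordance with Proposition~\ref{6} --- the last point holding because the gluing locus always restricts $H^1$-isomorphically onto the inserted bundle.
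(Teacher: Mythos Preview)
Your treatment of (1)--(6) and (8) is correct and matches the paper's proof essentially line for line: the paper also cites Corollary~\ref{cor:weight} for (2) and (5), Propositions~\ref{6.1} and~\ref{6} for (4) and (6), and the fibre-product construction for (3).

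For (7) you have the right outline but are missing the key step, and you are also making life harder than necessary. The paper resolves your acknowledged ``main obstacle'' with a weight argument: in the Mayer--Vietoris sequence for the blow-up $\tilde X\to X$ along a smooth centre $C\subset f^{-1}(y)$ with exceptional divisor $E$, one has $W_iH^{i+1}(X)=0$ because $X$ is smooth, so the map $H^i(\tilde X)\oplus H^i(C)\to H^i(E)$ is surjective for every $i$. Since this map factors through $H^i(\tilde f^{-1}(y))\oplus H^i(C)$, the latter map is also surjective, and one obtains a \emph{short exact} sequence
\[
0\to H^i(f^{-1}(y))\to H^i(\tilde f^{-1}(y))\oplus H^i(C)\to H^i(E)\to 0,
\]
hence $\dim H^i(\tilde f^{-1}(y))=\dim H^i(f^{-1}(y))+\dim H^i(E)-\dim H^i(C)$. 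For $i>2$ and $\dim C\le 1$ this is $\dim H^i(f^{-1}(y))+\dim H^i(E)$, so nothing cancels. This is exactly the survival statement you flagged, and it is this weight trick that makes the argument go through cleanly.

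Second, you do not need high-genus curves or higher-dimensional centres. Blowing up a point gives $E=\PP^{n-1}$ with Poincar\'e polynomial $1+t^2+\cdots+t^{2n-2}$, covering every even degree in the range. Blowing up an \emph{elliptic} curve gives a $\PP^{n-2}$-bundle $E$ over it, with Poincar\'e polynomial $(1+t)^2(1+t^2+\cdots+t^{2n-4})$, whose odd-degree part hits every odd $i$ with $3\le i\le 2n-3$. So points and elliptic curves already cover the full interval $2\le i\le 2n-2$; there is no need to hunt for higher-dimensional centres with large odd cohomology.
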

\begin{proof} (1) is obvious. (2) and (5) follows from corollary~\ref{cor:weight}.

(3) Let  $g:Y_1\to Y_2$ be a smooth morphism. If $X_2\to Y_2$ is a resolution then
$X_1:=Y_{1\times Y_2} X_2 \to Y_1$ is also a resolution of $Y_1$ with the same fibers,
and there is a fiber square :
$$
 \xymatrix{
  X_1\ar[d]\ar[r] & X_2\ar[d] \\ 
  Y_1\ar[r] & Y_2
 }
 $$

 of resolutions with horizontal smooth maps and
 the result follows.
 
 (4) and (6) follow from Propositions \ref{6.1} and\ref{6}.

  (7) Blow-up  a smooth centre $C$ in the fibre $f^{-1}(y)$ of the
  resolution $f:X\to Y$ to obtain a new resolution $\tilde{f}:
  \tilde{X}\to Y$. Let $E$ denote the exceptional fibre of $\tilde{f}$.
Consider Mayer-Vietoris sequence 
$$
\ldots \to W_jH_c^{i-1}(E)\to W_jH_c^i(X)\to
W_jH_c^{i}(\tilde{X})\oplus W_jH_c^{i}(C)\to W_jH_c^{i}(E) \ldots
$$
If $j=i$  then $ W_jH_c^{i}()=H_c^{i}()$, and $W_{j}H^{i+1}(X)=0$. Thus 
$H_c^{i}(\tilde{X})\oplus H_c^{i}(C)\to H_c^{i}(E)$ is an epimorphism.
Since $\tilde{f}^{-1}(y)\supseteq E$, the above morphism factors through
$$H_c^{i}(\tilde{X})\oplus H_c^{i}(C)\to H_c^{i}(\tilde{f}^{-1}(y))\oplus H_c^{i}(C)\to H_c^{i}(E).$$
Consequently the Mayer-Vietoris
morphism $$H_c^{i}(\tilde{f}^{-1}(y))\oplus H_c^{i}(C)\to H_c^{i}(E)$$ 
 is also an epimorphism for all $i$.  Thus, in the Mayer-Vietoris sequence.
$$
\ldots \to H_c^{i-1}(E)\to H_c^i(f^{-1}(y))\buildrel \psi\over\to
H_c^{i}(\tilde{f}^{-1}(y))\oplus H_c^{i}(C)\buildrel \phi\over\to H_c^{i}(E) \ldots
$$
we see $\phi$ is an epimorphism and $\psi$ is a
monomorphism. Therefore we get the short exact sequence.
$$
0 \to H_c^i(f^{-1}(y))\buildrel \psi\over\to
H_c^{i}(\tilde{f}^{-1}(y))\oplus H_c^{i}(C)\buildrel \phi\over\to H_c^{i}(E) \to 0$$
We are going to use blow-ups with  centers which are either point or smooth elliptic curves.
Then $\dim(C)\leq 1$, and $H_c^{i}(C)=0$ for $i> 2$.
We get $$\dim(H_c^{i}(\tilde{f}^{-1}(y))=\dim(H_c^{i}({f}^{-1}(y))+\dim(H_c^{i}(E)).$$
Since $E$ is a locally trivial bundle over $C$ with fibre ${\PP}^l$,
where $n=\dim(Y)$, $l=n-1$ if $C$ is a point, and $l=n-2$ if $C$ is an
eliptic curve. As is well known, e.g. \cite[p 270]{bott},
$$H_c^{i}(E)\simeq \bigoplus_{j+k=i} H^j(C)\otimes H^k({\PP}^l).$$

In particular the Poincare polynomial $P_E$ of $E$ is equal to
$$P_E(t)=1+t^2+\ldots+t^{2n-2}$$\noindent for the blow-up at the point, and
$$P_E(t)=(1+2t+t^2)(1+t^2+\ldots+t^{2n-4})$$\noindent for the blow-up at the elliptic curve.
 If we apply blow-ups at points and elliptic curves in  in the fibre $f^{-1}(y)$ we can increase cohomology $h^i(f^{-1}(y))$, where $2\leq i\leq 2n-2$.

(8) Follows from the inequality $\dim_{\R}(f^{-1}(y))\leq 2\dim(Y)-2$.
\end{proof}
\begin{thm}
\-
\begin{enumerate}
\item If $y\in Y$ is a nonsingular point
  then $$h^i(y)=w_{j}^i(y)=0,\mbox{ for}\quad i>0.$$
 $$\bar{w}_{j}^i(y)=w_{j}^i(y)=0\quad \mbox{for}\quad i>0,\quad  j<i.$$  
 \item If $y\in Y$ is toroidal   (analytically equivalent to the germ of a toric
variety) then $$w_{j}^i(y)=\bar{w}_{j}^i(y)=0$$ \noindent for  $i>0$, $j<i/2$.
\item If $y\in Y$ is rational then    $$w_{0}^i(y)=\bar{w}_{0}^i(y)=0$$  for $i>0$. 
\item If $y \in Y$ is an isolated normal Cohen-Macaulay singularity  $$w_{0}^i(y)=\bar{w}_{0}^i(y)=0 $$ \noindent for  $ i\neq \,0, \, \dim Y-1$ 
\item If $y \in Y$ is normal then $h^{0}(y)=1$.
\end{enumerate}

\end{thm}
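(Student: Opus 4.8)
The plan is to reduce this to Zariski's connectedness theorem. For any resolution $f\colon X\to Y$ the space $H^0(f^{-1}(y),\C)$ is $\C^{c}$, where $c$ is the number of connected components of the fibre; since $f$ is surjective the fibre is nonempty, so $c\ge 1$ and hence $\dim H^0(f^{-1}(y))\ge 1$ for every resolution. Thus $h^0(y)\ge 1$ unconditionally, and it remains to produce a single resolution whose fibre over $y$ is connected. By Proposition~\ref{main2}(6) the dimension $\dim H^0(f^{-1}(y))$ is in fact the same for all resolutions, which serves as a consistency check though it is not strictly needed for the inequality $h^0(y)\le 1$.

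First I would verify that $f_*\OO_X=\OO_Y$ in a neighbourhood of $y$. Because the germ $(Y,y)$ is normal and $f$ is proper and birational with $X$ smooth (hence normal), the coherent $\OO_Y$-algebra $f_*\OO_X$ coincides with $\OO_Y$ over the dense open locus on which $f$ is an isomorphism; a local section of $f_*\OO_X$ is a regular function on an open part of $X$, hence a rational function on $Y$ which is integral over $\OO_Y$, and therefore already lies in $\OO_Y$ by normality. Hence $f_*\OO_X=\OO_Y$ near $y$.

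Next, I invoke the Stein factorization together with Zariski's main theorem: a proper morphism with $f_*\OO_X=\OO_Y$ has connected fibres. Consequently $f^{-1}(y)$ is connected, so $H^0(f^{-1}(y),\C)=\C$, and therefore $h^0(y)=\bar h^0(y)=1$.

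The argument has essentially no obstacle; the only mild subtlety is passing between the scheme-theoretic fibre (which appears in the Stein factorization) and the reduced fibre, or its underlying topological space, on which the cohomology is taken. This is harmless because $H^0(-,\C)$ detects only connected components, which a nilpotent thickening does not change. One should also be careful to phrase everything for the germ $(Y,y)$, so that the integral-closure step is applied in a suitable neighbourhood of $y$ rather than globally.
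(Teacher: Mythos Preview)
Your proposal addresses only part (5) of the theorem. For that part it is correct and is exactly the approach the paper takes: the paper's entire proof of (5) is the line ``follows from Zariski's main Theorem,'' and you have simply unpacked this---normality of $(Y,y)$ gives $f_*\OO_X=\OO_Y$, whence the fibres are connected and $h^0(y)=1$. Your remarks about the reduced versus scheme-theoretic fibre and about working on a germ are appropriate caveats but, as you note, cause no trouble.

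If you were meant to prove the full statement, parts (1)--(4) are not covered. In the paper these are each one-line reductions to earlier results: (1) uses the identity resolution $Y\to Y$ (so $f^{-1}(y)$ is a point and all $H^i$ with $i>0$ vanish) together with Proposition~\ref{main2}(4) to get $\bar w_j^i=w_j^i$ for $j<i$; (2) follows from Proposition~\ref{prop:toric}; (3) and (4) follow from Corollary~\ref{cor:main}. None of these require new ideas beyond citing the relevant proposition, so the omission is easy to repair.
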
 
\begin{proof}
(1) To see this, take  the trivial resolution $Y\buildrel {\rm id}
\over\to Y$, and  apply (4) from the previous proposition.

(2) follows from Proposition  \ref{prop:toric}.

(3) and (4) follow from Corollary \ref{cor:main}. 

(5) follows from Zariski's main Theorem.
\end{proof}

\section{Weights of the link}

The link $L$ of a singularity $(Y,y)$ is the boundary of a suitable small contractible
neighbourhood of $y$.  When $(Y,y)$ has isolated singularities,
$H^i(L)$ carries  mixed Hodge structure  by the identification
$$H^i(L)\cong H^{i+1}(Y,Y-\{y\})$$ when $i>0$ (cf. \cite{steenbrink}).
More generally, work of Durfee and Saito \cite{ds} shows that
the  intersection cohomology $IH^i(L)$ carries a mixed Hodge structure
which is independent of any choices. (Among the various indexing
conventions, we choose the one where  $IH^i(L)$ coincides with ordinary
cohomology $H^i(L)$ when  $L$ is a manifold, i.e. when
$(Y,y)$ is isolated. This means that $IH^i(L) =
H^{i-\dim L}(k_{!*}(\Q_{L'}[\dim L]))$, where $k:L'\to L$ is the smooth locus.)
The general case, which appeals to Saito's  theory of mixed Hodge
modules \cite{saito}, is much more involved. By definition the mixed
Hodge structure on the intersection cohomology of the link is given
by right hand side of equation \eqref{eq:link}, below,  computed  in the category of mixed Hodge modules.
We can apply our previous results to get bounds on the weights of these mixed Hodge structures.

These lead us to the following  interesting invariants of singularities:

$$\ell_{j}^i(y):=\dim W_jIH^i(L),$$
\noindent where  $L$ is the link of singularity $(Y,y)$.

\begin{thm}
If $i<\dim Y$, then
$$\ell_{j}^i(y)\le \dim W_j(H^i(f^{-1}(y))$$
for any desingularization $f:X\to Y$.
In particular, for any singularity $y\in Y$, $$\ell_{j}^i(y)\le w_{j}^i(y).$$
\end{thm}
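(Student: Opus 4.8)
The statement asserts that the weight filtration on the intersection cohomology of the link is bounded, fibre by fibre, by the weight filtration on the cohomology of the exceptional fibre of a resolution. My plan is to pass through the decomposition theorem applied to $f:X\to Y$, which is where the hypothesis $i<\dim Y$ and the whole mechanism will come from. First I would set up the situation: choose a desingularization $f:X\to Y$, restrict to a contractible (Stein) neighbourhood $Y^\circ$ of $y$, so that $L$ is the link, and recall from Durfee–Saito (and Saito's theory of mixed Hodge modules in the general, non-isolated case) that $IH^i(L)$ carries a mixed Hodge structure computed by the formula referenced as \eqref{eq:link} — essentially the cone, or the truncation, of the natural map from the cohomology of the smooth part of a neighbourhood to the cohomology of $L$ itself, realized inside the category of mixed Hodge modules.

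The key step is to compare $IH^i(L)$ with $H^i(f^{-1}(y))$ via the proper pushforward $Rf_*\Q_X$. By the decomposition theorem, $Rf_*\Q_X[\dim X]$ splits (as a mixed Hodge module, after localizing near $y$) into a direct sum of shifted intersection complexes of the image $Y$ and of proper subvarieties, one summand of which is $IC_Y$ itself. Taking stalk/costalk cohomology at $y$, the summand $IC_Y$ contributes exactly $IH^{*}(L)$ in the appropriate range, and the degree bound $i<\dim Y$ ensures that this $IC_Y$-summand sees only the "primitive'' part of the link and that no contribution of the other summands can cancel it — so $IH^i(L)$ becomes, in that range, a sub-mixed-Hodge-structure (a direct summand, even) of $H^i(f^{-1}(y))$, where the latter is identified with the stalk cohomology $\mathcal H^i(Rf_*\Q_X)_y$. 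Since a sub–mixed Hodge structure inherits the weight filtration by strictness — $W_jH^i(f^{-1}(y))$ restricts to $W_j$ of the summand — we get $\dim W_j IH^i(L)\le \dim W_j H^i(f^{-1}(y))$ directly, with no need for Corollary~\ref{cor:2ndthm}. The final clause, $\ell_j^i(y)\le w_j^i(y)$, follows by taking the infimum over resolutions $f$, using $w_j^i(y)=\inf_f\dim W_jH^i(f^{-1}(y))$.

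The main obstacle I anticipate is bookkeeping the indexing and the precise range in which the $IC_Y$-summand of $Rf_*\Q_X$ computes $IH^*(L)$ rather than $IH^*(Y^\circ)$ or something intermediate — i.e. correctly matching the convention (stated in the excerpt) that $IH^i(L)=H^{i-\dim L}(k_{!*}(\Q_{L'}[\dim L]))$ with the truncation/cone description of the link's mixed Hodge structure, and checking that the hypothesis $i<\dim Y$ is exactly what makes the relevant truncation transparent. A secondary, more technical point is handling the non-isolated case: one must work with the restriction of the mixed Hodge module $Rf_*\Q_X$ to a transversal slice or use Saito's formalism to make sense of "the link'' and its mixed Hodge structure, and then verify that the decomposition-theorem splitting is compatible with this restriction. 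Once the identification of $IH^i(L)$ as a direct summand (or at least a sub-MHS) of $H^i(f^{-1}(y))$ is in place, the inequality on dimensions of weight-graded pieces is formal from strictness of morphisms of mixed Hodge structures.
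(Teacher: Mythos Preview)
Your proposal is correct and follows essentially the same route as the paper: apply the decomposition theorem (in the category of mixed Hodge modules, via Saito) to $\R f_*\Q_X[n]$, identify $IC_Y$ as a direct summand, and use the truncation description $IC_Y=\tau_{\le -1}\R j_*IC_U$ together with the hypothesis $i<\dim Y$ to exhibit $IH^i(L)$ as a direct summand of $H^i(f^{-1}(y))$ as mixed Hodge structures. The paper's argument is exactly this, only more tersely stated; your anticipated bookkeeping with the indexing and the truncation is precisely what the paper handles via the identity $IH^{i+n}(L)=H^i(\iota^*(\tau_{\le -1}\R j_*IC_U))$ for $i<0$.
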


\begin{proof}
Let $n=\dim Y$.
  By the decomposition theorem \cite{bbd}, $\R f_* \Q[n]$ decomposes
  into a sum of shifted perverse sheaves. This moreover holds in the
  derived category of mixed Hodge modules by the work of Saito
  \cite{saito}. By restricting to the smooth locus $k:Y'\to Y$, we
  can see that one of these summands is necessarily the intersection
  cohomology complex $IC_Y= k_{!*}\Q_{Y'}[n]$. Let $U=Y-y$ and denote
  the  inclusions by  $j:U\to Y$ and $\iota:y\to Y$.
Then from \cite[2.1.11]{bbd}, we can conclude
  that $IC_Y= \tau_{\le -1} \R j_*IC_U$, where $\tau_\dt$ is the
  standard truncation operator \cite[1.4.6]{deligne}.  Since
  \begin{eqnarray}
    \label{eq:link}
    IH^{i+n}(L) &=& H^i( \iota^*\R j_*IC_U) \\
&=&H^i(\iota^*(\tau_{\le -1} \R
 j_*IC_U))
  \end{eqnarray}
for $i<0$, it follows
  that $IH^{i+ n}(L)$ is a summand of $H^i(\R f_* \Q[n] )=(H^{i+n}(f^{-1}(y))$ when $i<0$.
\end{proof}

As a corollary we get 
\begin{thm}
If $i<\dim Y$, then
$$\ell_{j}^i(y) \le w_{j}^i(y)\le \sum_{p\le j} 
\dim  (R^{i-p}f_*\Omega_X^{p})_y\otimes \OO_y/m_y$$
for any desingularization $f:X\to Y$.

\end{thm}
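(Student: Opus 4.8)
The plan is simply to concatenate two results already in hand. First, the preceding theorem gives, for $i<\dim Y$, the inequality $\ell_j^i(y)\le \dim W_jH^i(f^{-1}(y))$ for every desingularization $f:X\to Y$, and hence $\ell_j^i(y)\le w_j^i(y)$ after passing to the infimum over $f$. It then remains to bound the middle term from above: for any single resolution $f$ one has $w_j^i(y)=\inf_{f}\dim W_jH^i(f^{-1}(y))\le \dim W_jH^i(f^{-1}(y))$, so it suffices to bound $\dim W_jH^i(f^{-1}(y))$ by $\sum_{p\le j}\dim (R^{i-p}f_*\Omega_X^p)_y\otimes\OO_y/m_y$.

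For this second inequality I would invoke Corollary~\ref{cor:2ndthm}(2) verbatim — it is exactly the estimate $\dim W_jH^i(f^{-1}(y))\le \sum_{p\le j}\dim (R^{i-p}f_*\Omega_X^p)_y\otimes\OO_y/m_y$, itself a consequence of part (c) of the theorem preceding that corollary applied to the proper morphism $f$. Chaining the two bounds yields $\ell_j^i(y)\le w_j^i(y)\le \sum_{p\le j}\dim (R^{i-p}f_*\Omega_X^p)_y\otimes\OO_y/m_y$ for any desingularization $f$, valid whenever $i<\dim Y$, which is the assertion.

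Since both ingredients are already established, there is no genuine obstacle here; the statement is a formal corollary. The only point requiring a word of care is the bookkeeping of the middle term: one should observe that, although $w_j^i(y)$ is defined as an infimum over resolutions, the displayed bound is asserted for \emph{any} chosen $f$, and this is legitimate precisely because $w_j^i(y)\le \dim W_jH^i(f^{-1}(y))$ for that $f$ and the right-hand side is in turn controlled by Corollary~\ref{cor:2ndthm}(2). Thus the theorem should be read as: whatever resolution $f:X\to Y$ one has in hand, the cohomology of the sheaves $\Omega_X^p$ along its fibres already bounds both $w_j^i(y)$ and $\ell_j^i(y)$.
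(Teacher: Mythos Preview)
Your proposal is correct and follows exactly the paper's own argument: the paper's proof reads, in its entirety, ``We use the previous theorem and apply the bounds from corollary~\ref{cor:2ndthm},'' which is precisely the concatenation you describe. Your added remark about the bookkeeping of the infimum defining $w_j^i(y)$ is a helpful clarification but does not change the route.
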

\begin{proof} We use the previous theorem  and  apply the
  bounds from corollary \ref{cor:2ndthm}.
\end{proof}

\begin{rmk}
  For an isolated singularity, we can argue more directly, without
  mixed Hodge module theory, as in \cite[cor 1.12]{steenbrink}.
\end{rmk}
\begin{ex} If $x\in X$ is a nonsingular point then $L\simeq S^{2n-1}$ and we get

$\ell_{j}^i(x)=0$ for $j<i$, or  $j\geq i$ and 
$i\neq 0,2n-1$,

 $\ell_{j}^i(x)=1$ if $j\geq i$ and $i=0,2n-1$
\end{ex}
\begin{cor}
For a normal isolated Cohen-Macaulay (respectively
rational) singularity, 
  $$w_{0}^i(y)=\bar{w}_{0}^i(y)=\ell_{0}^i(y)=0 $$ \noindent for $0<i<\dim Y-1$ (respectively $i>0$).
If $(Y,y)$ is toroidal then $$w_{j}^i(y)=\bar{w}_{j}^i(y)= \ell_{j}^i(y)=0$$ \noindent for $j<i/2$.
\end{cor}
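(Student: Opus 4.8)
The plan is to combine the two immediately preceding theorems of this section with the corollaries of Sections~4 and 5. The final statement collects three vanishing assertions; I would treat them in order, since each is a direct specialization of a more general bound already established. The organizing principle is that the link invariant $\ell_j^i(y)$ is squeezed by the fibre invariant $w_j^i(y)$ (via the decomposition-theorem inequality $\ell_j^i(y)\le w_j^i(y)$ proved just above), so it suffices to show the relevant $w_j^i$ and $\bar w_j^i$ vanish, and those facts are exactly the content of the theorem at the end of Section~5.

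First I would dispose of the Cohen--Macaulay and rational cases. For a normal isolated Cohen--Macaulay singularity, parts (3)--(4) of the last theorem of Section~5 give $w_0^i(y)=\bar w_0^i(y)=0$ for $i\neq 0,\dim Y-1$, hence in particular for $0<i<\dim Y-1$; the inequality $\ell_0^i(y)\le w_0^i(y)$ valid for $i<\dim Y$ then forces $\ell_0^i(y)=0$ in that same range. For a rational singularity, part (3) of that theorem gives $w_0^i(y)=\bar w_0^i(y)=0$ for all $i>0$, and again $\ell_0^i(y)\le w_0^i(y)=0$ for $0<i<\dim Y$; one should note that the constraint $i<\dim Y$ in the link theorem is harmless here because for $i\ge \dim Y$ the relevant $IH^i(L)$ is covered by Poincar\'e duality on $L$ (or simply lies outside the stated range). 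The toroidal case is handled the same way: part (2) of the Section~5 theorem gives $w_j^i(y)=\bar w_j^i(y)=0$ for $i>0$, $j<i/2$, and feeding this into $\ell_j^i(y)\le w_j^i(y)$ yields $\ell_j^i(y)=0$ for $j<i/2$ in the range $i<\dim Y$.

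The one point requiring a little care, and the only real obstacle, is bookkeeping about ranges of $i$: the link inequality $\ell_j^i(y)\le w_j^i(y)$ was only proved for $i<\dim Y$, whereas the corollary is stated without that restriction. I would resolve this either by invoking the standard vanishing $IH^i(L)=0$ for $i\ge \dim Y=\dim_{\mathbb R}L/2+1$ coming from the perverse $t$-structure (equivalently, the truncation $IC_Y=\tau_{\le -1}\mathbf{R}j_*IC_U$ used in the proof of the link theorem already shows $IH^{i+n}(L)=0$ for $i\ge 0$), or by simply restricting the stated conclusion to the meaningful range, which is what the surrounding text intends. No new machinery is needed beyond what Sections~4--6 provide; the proof is genuinely a two-line citation of the two preceding theorems, and I would write it as such, flagging the range issue in a parenthetical remark.

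\begin{proof}
All three assertions follow by combining the previous two theorems with the last theorem of Section~5. For a normal isolated Cohen--Macaulay singularity, that theorem gives $w_0^i(y)=\bar w_0^i(y)=0$ for $i\neq 0,\dim Y-1$; since $\ell_0^i(y)\le w_0^i(y)$ for $i<\dim Y$, we get $\ell_0^i(y)=0$ for $0<i<\dim Y-1$ as well. For a rational singularity the same theorem gives $w_0^i(y)=\bar w_0^i(y)=0$ for all $i>0$, and then $\ell_0^i(y)\le w_0^i(y)=0$ for $0<i<\dim Y$; for $i\ge \dim Y$ the truncation $IC_Y=\tau_{\le -1}\mathbf{R}j_*IC_U$ from the proof of the link theorem shows $IH^i(L)=0$, so $\ell_0^i(y)=0$ for all $i>0$. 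Finally, if $(Y,y)$ is toroidal, the Section~5 theorem gives $w_j^i(y)=\bar w_j^i(y)=0$ for $i>0$ and $j<i/2$, whence $\ell_j^i(y)\le w_j^i(y)=0$ in that range (again using the truncation to handle $i\ge\dim Y$).
\end{proof}
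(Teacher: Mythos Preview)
Your approach is exactly what the paper intends: the corollary is stated there without proof, as an immediate consequence of combining the inequality $\ell_j^i(y)\le w_j^i(y)$ for $i<\dim Y$ (the theorem just above) with the vanishing results in the last theorem of Section~5. Your citation of parts (2), (3), (4) of that theorem for the toroidal, rational, and Cohen--Macaulay cases respectively is precisely the intended argument.

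One small correction: your resolution of the range issue $i\ge\dim Y$ via the truncation is not quite right. The identification $IH^{i+n}(L)=H^i(\iota^*\R j_*IC_U)$ holds for all $i$, and the equality $IC_Y=\tau_{\le-1}\R j_*IC_U$ only lets you replace $\R j_*IC_U$ by $IC_Y$ in degrees $i<0$; it does not by itself force $IH^i(L)=0$ for $i\ge n$ (indeed $IH^{2n-1}(L)\neq 0$ already for a smooth point). The paper does not address this range at all, so you are going beyond what it provides. If you want to close the gap in the rational and toroidal cases, the cleanest route for isolated singularities is to invoke the known weight bounds on link cohomology (for $i\ge n$ one has $W_0H^i(L)=0$, and more generally $Gr^W_kH^i(L)=0$ for $k\le 2(i-n)$; cf.\ \cite{steenbrink}), rather than the truncation argument you give.
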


The natural conjecture which arises here is whether 
\begin{conj} The invariants $w_{j}^i(y), \bar{w}_{j}^i(y),
  \ell_{j}^i(y) $ are upper semicontinuous.
\end{conj}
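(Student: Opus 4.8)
The plan is to treat the three families of invariants separately and reduce all of them to a single upper-semicontinuity statement for the weight filtration on the cohomology of the fibres of a fixed proper morphism. First, the link invariants $\ell^i_j$ can be absorbed into the $w^i_j$: exactly as in the proof of the bound $\ell^i_j(y)\le w^i_j(y)$, the decomposition theorem in the derived category of mixed Hodge modules exhibits $IH^{i}(L)$, up to a shift, as a direct summand of $H^{i}(f^{-1}(y))$ \emph{as a mixed Hodge structure} for any resolution $f:X\to Y$, and this summand is cut out by a sub-mixed-Hodge-module of $\mathcal H^{i}f_*\Q^H_X$. Hence $W_jIH^i(L)$ is a direct summand of $W_jH^i(f^{-1}(y))$, and upper semicontinuity of $\ell^i_j$ follows from that of the corresponding weight-graded summand, which is of the same nature as the $w^i_j$. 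For $w^i_j(y)=\inf_f\dim W_jH^i(f^{-1}(y))$ one uses that the infimum of a family of upper-semicontinuous $\N$-valued functions is again upper semicontinuous, the locus $\{w^i_j<c\}$ being the union over all resolutions $f$ of the open loci $\{\dim W_jH^i(f^{-1}(y))<c\}$. So it suffices to prove the following assertion $(\star)$: for a fixed proper morphism $f:X\to Y$ with $X$ smooth, the function $y\mapsto\dim W_jH^i(f^{-1}(y))$ is upper semicontinuous on $Y$.

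To prove $(\star)$ I would again work with mixed Hodge modules. By proper base change the mixed Hodge structure on $H^i(f^{-1}(y))$, with its weight filtration, is the fibre at $y$ of the constructible complex underlying $Rf_*\Q^H_X$; the point is to upgrade the inequality of Corollary~\ref{cor:2ndthm} to the assertion that $W_jH^i(f^{-1}(y))$ is itself the fibre at $y$ of a \emph{coherent} sheaf on $Y$. Concretely, one would carry out the identification \eqref{eq:1W0F0}, together with its higher-weight analogue from the proof of Corollary~\ref{cor:2ndthm}, relatively over $Y$: using that the Hodge filtration is trivial on $W_jH^i$ in this range (the relative form of the vanishing $F^{j+1}\cap W_jH^i=0$), realize $W_jH^i$ of the fibre as the fibre of a coherent sheaf built from the Hodge-filtered module underlying the sub-mixed-Hodge-module $W_j\mathcal H^if_*\Q^H_X$. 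Since the fibre dimension $y\mapsto\dim_{k(y)}(\mathcal G\otimes k(y))$ of \emph{any} coherent sheaf $\mathcal G$ is upper semicontinuous, $(\star)$ would follow.

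The step I expect to be the main obstacle is precisely this relative form of \eqref{eq:1W0F0}: showing that $W_jH^i$ of the fibre \emph{equals}, and not merely is bounded by, the fibre of a coherent sheaf. Equivalently, one must control the behaviour of the weight filtration under specialization. Testing upper semicontinuity along a smooth curve germ $(C,0)\to(Y,y_0)$ whose generic point $\eta$ lands in the open stratum, one needs $\dim W_jH^i(f^{-1}(y_0))\ge\dim W_jH^i(f^{-1}(\eta))$. The specialization morphism $H^i(f^{-1}(y_0))\to\psi H^i$ into the nearby cycles of the pulled-back family is a morphism of mixed Hodge structures and, by the local invariant cycle theorem, surjects onto the monodromy invariants; but $\psi$ shifts weights, so the filtration induced on $\psi H^i$ is the \emph{monodromy} weight filtration rather than the weight filtration of $H^i(f^{-1}(\eta))$, and reconciling the two so as to obtain the displayed inequality requires a delicate analysis, for which the purity of $\Q^H_X$ (since $X$ is smooth) should be the main lever. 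This seems to be exactly the content that keeps the statement a conjecture.

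Finally, for $\bar w^i_j=\sup_f\dim W_jH^i(f^{-1}(y))$ the infimum trick is unavailable: a supremum of upper-semicontinuous functions need not be upper semicontinuous, and by Proposition~\ref{main2}(7) the value $+\infty$ already occurs on whole ranges of $i$. So beyond $(\star)$ one would also need a boundedness input, namely that near $y_0$ the resolutions whose fibres over nearby points realize dimensions approaching $\bar w^i_j$ can be chosen from a bounded family, and that the locus $\{\bar w^i_j=+\infty\}$ is closed. This is automatic where $\bar w^i_j\equiv 0$ by Proposition~\ref{main2}(5), and vacuous at points where $\bar w^i_j=+\infty$, but the intermediate finite values appear to require a genuinely new idea.
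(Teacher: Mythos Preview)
This statement is labelled a \emph{Conjecture} in the paper and is not proved there; there is no argument in the paper to compare your proposal against. Your proposal is not a proof either, and you say so yourself: you isolate an assertion $(\star)$ and then explain that establishing it ``seems to be exactly the content that keeps the statement a conjecture,'' and for $\bar w^i_j$ you conclude that the intermediate finite values ``appear to require a genuinely new idea.'' So what you have written is a research outline with an honest accounting of the obstructions, not an argument to be checked for correctness.

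On the outline itself: the reduction of $w^i_j$ to $(\star)$ via the infimum is sound, since $\{w^i_j<c\}=\bigcup_f\{y:\dim W_jH^i(f^{-1}(y))<c\}$ is a union of open sets once each term is open. The reduction for $\ell^i_j$ is shakier than you suggest: the decomposition theorem in the paper only gives $IH^i(L)$ as a summand of $H^i(f^{-1}(y))$ for $i<\dim Y$, and you have not argued that the splitting idempotent varies constructibly in $y$, which is what you would need to pass semicontinuity from the whole to the summand. Your proposed route to $(\star)$, realizing $W_jH^i(f^{-1}(y))$ as the fibre of a coherent sheaf, collides with the known failure of the weight filtration to be compatible with specialization: nearby-cycle functors shift weights, exactly as you note, and purity of $\Q^H_X$ does not by itself repair this. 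For $\bar w^i_j$ you are right that a supremum of upper-semicontinuous functions need not be upper semicontinuous, so even granting $(\star)$ one would still need the boundedness input you describe; nothing in the paper supplies it. In short, your diagnosis of where the difficulties lie is reasonable, but none of the gaps you identify is closed, and the statement remains open exactly as the paper presents it.
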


\bigskip
\section{Boundary divisors and the Weak factorization theorem}

Suppose we are given  a smooth complete variety $X$, and  a  divisor with simple normal crossings
$E\subset X$, or more generally
a union of smooth subvarieties  which is local analytically a union of
intersections of coordinate hyperplanes.
We will show that the homotopy type of the
dual complex of $E$  depends only the  the
complement $X-E$, and in fact only on its proper birational class.
In fact, we will prove somewhat sharper  results in theorems  \ref{8}, \ref{9} and \ref{10}  below.
Stepanov in \cite{step2} showed this result in the particular case,
where $ E$ is the exceptional divisor of a resolution of an isolated
singularity. Actually, his proof works for any boundary divisors which
are complements a fixed open subset $U=X- E$.  Thuillier in
\cite{th} proves  a similar  result in any characteristic. In our
set up,  the complement $X- E$ is not fixed. That is, we
assume that for two different divisors $E$ and  $E'$, there is  a
proper birational map $X- E\dashrightarrow X'-E'$. This implies the isomorphism of homotopy types of the dual
complexes of $E$ and $E'$. Payne,  in his paper  \cite[\S 5]{payne},
 compares the homotopy types of the dual complexes of divisors of  different log resolutions of a given singular variety.
In our version, the situation is more general.  In particular, we allow the
maximal components of $E$ to be of any dimension. This situation
arises naturally when $E$ is the fibre of a resolution of a  nonisolated singularity.

We recall some constructions and notations from earlier sections, so
that section can be read independently of the rest of the paper, Let
$D=\cup D_i$ be a SNC (simple normal crossing) divisor on a
nonsingular variety $X$. The dual complex is a CW-complex $\Sigma_D$ whose cells are simplices $\Delta^j_{i_1,\ldots,i_k}$ corresponding to the irreducible components of $D^j_{i_1,\ldots,i_k}$ of the intersection $D_{i_1,\ldots,i_k}=D_{i_1}\cap,\ldots\cap D_{i_k}$. 
 For any $\Delta=\Delta^j_{i_1,\ldots,i_k}$, and $\Delta'=\Delta^{j'}_{i'_1,\ldots,i'_s}$, where $\{{i'_1,\ldots,i'_s}\}\subset \{i_1,\ldots,i_k\}$ the simplex $\Delta'$ is a face of $\Delta$ if  $D^{j'}_{{i'_1,\ldots,i'_s}}\supset D^j_{i_1,\ldots,i_k}$. 
If all the intersections $D_{i_1,\ldots,i_k}=D_{i_1}\cap,\ldots\cap D_{i_k}$ are irreducible then then $\Sigma$  is a simplicial complex. 
In general the dual complex is a {\it quasicomplex}, i.e. it is a collection of simplices closed with respect to the face relation, and such that the intersection of two simplices is a union of some of their faces.

We recall some basic notions for quasicomplexes which refine those for
simplicial complexes.

\begin{itemize}
\item By the $\Star(\Delta,\Sigma)$, where  $\Delta\in \Sigma$, we mean the set of  all faces of $S$ which contain $\Delta$.

\item For any set $S\subset\Sigma$ by $\overline{S}\subset\Sigma$ we mean the complex consisting of simplices of $\Sigma$ and their faces.

\item By the {\it link}  we mean $L(\Delta,\Sigma_D)= \overline{\Star(\Delta,\Sigma_D)}\setminus  \Star(\Delta,\Sigma_D)$.

\item Let $v_\Delta$ be the barycentric centre of $\Delta$.
By the {\it stellar sudivision} of $\Sigma$ at $\Delta$ (or at $v_\Delta$) we mean
 $$\Sigma':=v_\Delta\cdot \Sigma:=\Sigma \setminus
 \Star(\Delta,\Sigma)\cup \overline{\{\conv(v,\Delta)\mid \Delta \in
   L(\Delta,\Sigma)\}},$$
where  $\conv(v_\Delta,\Delta)$  is the  simplex spanned by $v_\Delta$, and  $\Delta$.
\end{itemize}

Let   $\Sigma_D$  be a dual (quasi)-complex associated with $D$ on a
nonsingular $X$. Let  $\Delta$ be a simplex in $\Sigma_D$ and let
$C=D(\Delta)$ be the corresponding intersection components of $D$.  Then  the blow-up $\sigma: X'\to X$ of $C$  determines transformation of divisors $D\mapsto D'=\sigma^{-1}(D)$ which corresponds to the stellar subdivision $v_\Delta\cdot\Sigma_D$. 
Recall a well known lemma:

\begin{lemma} Let   $\Sigma_D$  be the dual (quasi)-complex associated with $D$ on a nonsingular $X$. Let $X'\to X$ be a composition of blow-ups of all the intersection components starting from the components of the smallest dimension
and ending at the blow-ups of the components of the highest dimension (divisors). 
Then the resulting quasicomplex $\Sigma_{D'}$ which is obtained from $\Sigma_D$ by the successive stellar  subdivisions is a simplicial complex.
\end{lemma}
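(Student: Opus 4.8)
The plan is to prove the lemma by induction on the dimension of the ambient variety $X$, or equivalently on the maximal dimension of the intersection components of $D$, running the blow-ups dimension by dimension exactly as prescribed in the statement. The key observation is the one recalled immediately before the lemma: blowing up the intersection component $C = D(\Delta)$ corresponding to a simplex $\Delta$ transforms $\Sigma_D$ into the stellar subdivision $v_\Delta \cdot \Sigma_D$, and stellar subdivision at the barycentre of a simplex always produces a genuine simplicial complex locally around the star of that simplex. So the content is really a bookkeeping argument: one must check that performing all these stellar subdivisions, in the right order, eliminates every failure of the simplicial-complex axioms without creating new ones.

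First I would set up the inventory of ``defects'': a pair of distinct simplices $\Delta^j_{i_1,\ldots,i_k}$ and $\Delta^{j'}_{i_1,\ldots,i_k}$ sitting over the same index set (i.e.\ over the same intersection $D_{i_1,\ldots,i_k}$ but on different connected/irreducible components), together with the resulting failure of ``a simplex is determined by its vertices'' and the failure of ``the intersection of two simplices is a single common face.'' I would start the induction at the lowest-dimensional intersection components: blowing up $C = D^j_{i_1,\ldots,i_k}$ of minimal dimension introduces a new divisor $E$ (the exceptional divisor), a new vertex $v_\Delta$ in the dual complex, and replaces $\Star(\Delta,\Sigma_D)$ by the cone $v_\Delta * L(\Delta,\Sigma_D)$. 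After blowing up all minimal-dimensional components (which are pairwise disjoint, or meet only along even-smaller strata that don't exist, so the blow-ups commute and are unambiguous), every simplex containing one of these old minimal components now contains the corresponding new vertex $v_\Delta$ instead, and one checks that any two simplices sharing such a vertex are now distinguished. Then I would argue that the transformed divisor $D'$ on $X'$ has the property that its minimal intersection components are again irreducible (they are blow-ups or strict transforms of the next stratum up, and the ambiguity has been resolved by the subdivision), so the inductive hypothesis applies to the components of $D'$ of dimension one larger, and so on up to the divisors themselves.

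The step I expect to be the main obstacle is verifying that the stellar subdivisions genuinely remove all non-simpliciality and, crucially, that blowing up higher strata does not reintroduce it among the already-fixed lower strata — in other words, that the inductive invariant ``all intersection components of dimension $< d$ are irreducible and their dual simplices are embedded'' is preserved when we pass from blowing up dimension-$d$ components to dimension-$(d+1)$ components. This requires understanding the local analytic picture of the strict transform: near a point of $C$, the blow-up is a product situation, and the combinatorial effect on the link $L(\Delta,\Sigma_D)$ is transparent, but one must confirm that two components of $D_{i_1,\ldots,i_k}$ that were separated at an earlier stage cannot be ``reglued'' by a later blow-up, which follows because later blow-ups are along strata lying inside the closures of these components and hence are compatible with the already-performed subdivisions. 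I would also need to invoke the standard fact (see the references on toroidal geometry cited earlier, e.g.\ the weak factorization literature) that a sequence of stellar subdivisions of a quasicomplex, performed from lowest-dimensional faces upward, yields a simplicial complex — this is the purely combinatorial skeleton of the argument, and I would cite it rather than reprove it, noting that the geometric blow-up sequence realizes exactly this combinatorial sequence via the correspondence stated before the lemma.
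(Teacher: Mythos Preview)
Your overall strategy---follow the blow-up sequence and use the dictionary between blowing up an intersection component and performing a stellar subdivision---is the right framework, and it is the one the paper uses. But there are two issues.

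First, a slip: intersection components of \emph{smallest} dimension correspond to simplices of \emph{largest} dimension in the dual complex, so the stellar subdivisions run from top-dimensional faces downward, not ``from lowest-dimensional faces upward'' as you write in the last paragraph. (Your middle paragraph has the geometry right; only the combinatorial translation at the end is reversed.)

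Second, and more substantively, your plan to cite the purely combinatorial fact that such a sequence of stellar subdivisions of a quasicomplex yields a simplicial complex shifts the burden to a point you do not address: why, at each stage of the iteration, is the centre being blown up still an intersection component of the \emph{current} divisor, so that the blow-up/stellar-subdivision dictionary (stated just before the lemma for a single blow-up) continues to apply? The paper's proof handles exactly this, and in doing so bypasses the combinatorial citation entirely. The argument is: at each step the centre $C = D^j_{i_1,\ldots,i_k}$ is by construction the lowest-dimensional remaining intersection component of the strict transforms of the $D_i$, hence it meets no $D_i$ other than $D_{i_1},\ldots,D_{i_k}$; it then follows by induction that the new exceptional divisor has irreducible intersection with every strict transform of an old intersection component and with every previously created exceptional divisor. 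Since at the end every original intersection component has been blown up (the divisorial blow-ups being trivial), $D'$ consists solely of exceptional divisors with pairwise irreducible intersections, and $\Sigma_{D'}$ is therefore a genuine simplicial complex. This is shorter than your proposed route and avoids both the bookkeeping of ``defects'' and the appeal to an external combinatorial lemma.
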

\begin{proof}  In the process we blow up (and thus eliminate) all the strict transforms of the intersections components of $D=\bigcup D_i$. Note that  the center $C= D^j_{i_1,\ldots,i_k}$ is the lowest dimensional intersection component of the strict transforms of $D_i$  so it intersects no other divisors $D_i$, except for $i={i_1,\ldots,i_k}$.

By  the induction centers of blow-ups and thus new exceptional divisors have irreducible intersections with the strict transforms of the intersections components of D and the already created exceptional divisors E. 
 Finally we   will have only irreducible intersections of the exceptional divisors.

\end{proof}

By this  lemma, it always possible to reduce the situation to SNC
divisor whose associated dual quasicomplex is a complex by applying
additional blow-ups. We will not need to do this however.

\begin{rmk} The blow-ups of the divisorial components coresponding to
  the stellar subdivisons at the vertices define identity
 transformations. They are introduced to simplify the considerations.
\end{rmk}

Proposition \ref{6} can be generalized in a few ways.
\begin{prop}\label{65} If $\phi :X\dashrightarrow  X'$ is a proper  birational map of two nonsingular projective varieties, such that its restriction $\phi_{|U}: U\dashrightarrow U'$ to   open sets $U\subset X$ and $U'\subset X'$ is proper. Then there is a  natural isomorphism
$$W_0(H^i(X\setminus U))\simeq W_0(H^i(X'\setminus U'))$$
\end{prop}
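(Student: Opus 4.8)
The plan is to imitate the proof of Proposition~\ref{6}: use the weak factorization theorem to reduce to a single blow-up along a smooth centre, and then read off the isomorphism from a five--lemma argument on the long exact sequences of (W3).

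First I would carry out the reduction. Applying weak factorization (\cite{wlodarczyk},\cite{akmw}) in the version that keeps track of an open subset --- which is available here because $\phi_{|U}$ is proper --- one connects $X$ and $X'$ by a finite chain $X=X_0\dashrightarrow X_1\dashrightarrow\cdots\dashrightarrow X_n=X'$ of smooth projective varieties in which each step $X_k\dashrightarrow X_{k+1}$ is a blow-up or a blow-down along a smooth centre, while one transports open subsets $U_k\subseteq X_k$ with $U_0=U$, $U_n=U'$ and $X_k\setminus U_k$ the total transform of $E:=X\setminus U$ at every stage. Since the desired isomorphism will be established for each elementary step, and isomorphisms compose and invert, we are reduced to a single blow-up $\sigma:\tilde X\to X$ along a smooth centre $C\subset X$ with $X$ smooth projective, $\tilde U:=\sigma^{-1}(U)$, $E=X\setminus U$ and $\tilde E:=\sigma^{-1}(E)=\tilde X\setminus\tilde U$.

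For this elementary step, $X$ and $E$ are complete, so $H^\ast_c=H^\ast$ on them, and $\tilde U=\sigma^{-1}(U)$ is simply the blow-up of the nonsingular (not necessarily complete) variety $U$ along the smooth, possibly empty, centre $C\cap U$. Mapping the exact sequence of (W3) for $U\subset X$ to the one for $\tilde U\subset\tilde X$ via $\sigma$ produces a commutative ladder whose five consecutive terms around $W_0H^i(E)$ form
$$
\xymatrix{
W_0H_c^{i}(U)\ar[r]\ar[d] & W_0H^{i}(X)\ar[r]\ar[d] & W_0H^{i}(E)\ar[r]\ar[d] & W_0H_c^{i+1}(U)\ar[r]\ar[d] & W_0H^{i+1}(X)\ar[d]\\
W_0H_c^{i}(\tilde U)\ar[r] & W_0H^{i}(\tilde X)\ar[r] & W_0H^{i}(\tilde E)\ar[r] & W_0H_c^{i+1}(\tilde U)\ar[r] & W_0H^{i+1}(\tilde X).
}
$$
The vertical maps over $X$ are isomorphisms by the blow-up lemma, and the vertical maps over $U$ are isomorphisms by that lemma together with Lemma~\ref{lemma:bundle} (both of which are in fact proved with compact supports), and this holds in every degree. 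The five lemma then forces $\sigma^\ast\colon W_0H^i(E)\to W_0H^i(\tilde E)$ to be an isomorphism; running this along the chain, the resulting identification is the one induced by $\phi$ through the closure of its graph, hence canonical.

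I expect the genuine obstacle to be the first step: extracting a factorization of $\phi$ compatible with the open sets in the stated generality, where $\phi_{|U}$ is only proper birational rather than an isomorphism, so that some centres meet $U$ and the transported $U_k$ must nonetheless remain the preimage (resp. a union of fibres) at each modification. The point that makes it go through is that any blow-up centre meeting $U$, being an open subset of a smooth subvariety of the ambient $X_k$, is automatically smooth in $U_k$, so each elementary modification restricts on the open part to a blow-up of a smooth centre (or to the identity); pinning this down, and checking that the open set arriving at $X'$ is exactly $U'$, is precisely the refinement of weak factorization underlying the results of the final section.
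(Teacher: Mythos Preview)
Your proposal is correct and follows essentially the same approach as the paper: the paper's proof is literally ``This is identical to the proof of Proposition~\ref{6}'', and you have reproduced precisely that argument---weak factorization to reduce to a single blow-up, then the five lemma applied to the ladder of $(W3)$ sequences, with the vertical isomorphisms supplied by the blow-up lemma. Your closing remark about transporting the open sets through the factorization is well taken and anticipates exactly the refinement recorded later as Theorems~\ref{fact} and~\ref{fact2}.
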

\begin{proof} This is identical to the proof of Proposition \ref{6}.
\end{proof}

If the fibre $f^{-1}(y)=D_y$ of the resolution $f:X\to Y$ is a SNC
divisor then the above Proposition \ref{65} says that the cohomology 
$H^i(\Sigma_{D_y})=W_0(H^i(D_y))$ of the dual complex $\Sigma_{D_y}$ are independent of the resolution  $f:X\to Y$ .
This  suggests:

\begin{thm}\label{7}  Let $f^{-1}(y)=D$ be the fibre of the resolution $f:Y\to X$ which is a SNC divisor.
 The homotopy type of the dual complex  $\Sigma_D$ corresponding to  the   fibre
 is independent of the resolution $f:Y\to X$.
\end{thm}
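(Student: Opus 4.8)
The plan is to deduce homotopy invariance of $\Sigma_D$ from the Weak Factorization theorem, exactly parallelling the proof of Proposition \ref{6}, but now tracking the whole homotopy type of the dual complex rather than just its cohomology. First I would reduce, via \cite{wlodarczyk,akmw}, to comparing two resolutions $f\colon X\to Y$ and $\tilde f\colon\tilde X\to Y$ related by a single blow-up $\sigma\colon\tilde X\to X$ along a smooth centre $C$, with both exceptional fibres $D=f^{-1}(y)$ and $\tilde D=\tilde f^{-1}(y)$ being SNC divisors. The key point is to arrange that $C$ is compatible with the divisor $D$: after further (harmless) blow-ups we may assume $C$ is either disjoint from $D$, or is one of the strata $D^j_{i_1,\ldots,i_k}$ of $D$, using the stellar-subdivision lemma recalled just above. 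In the first case $\sigma$ is an isomorphism near $D$ and there is nothing to prove; in the second case $D\mapsto\tilde D=\sigma^{-1}(D)$ realizes the stellar subdivision $v_\Delta\cdot\Sigma_D$ of the dual complex at the vertex $v_\Delta$ corresponding to the component $C=D(\Delta)$, and a stellar subdivision does not change the homotopy type (indeed not even the PL-homeomorphism type) of a simplicial/quasi-complex.

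The one subtlety is that blow-up along $C$ changes the fibre $\tilde f^{-1}(y)$, not merely $\sigma^{-1}(D)$: when $C\subseteq f^{-1}(y)$ we have $\tilde D=\sigma^{-1}(D)$ on the nose, but when $\dim C>0$ the blow-up can enlarge the fibre by the full exceptional divisor $E=\sigma^{-1}(C)\to C$, which is a $\PP^k$-bundle. So the second main step is to show that adding such an $E$ does not change the homotopy type of the dual complex. Concretely, $\Sigma_{\tilde D}$ is obtained from $\Sigma_D$ by the stellar subdivision at $\Delta$ followed by coning off the link $L(\Delta,\Sigma_D)$ with the vertex $v_\Delta$ and then attaching, along each simplex of the resulting complex that meets $v_\Delta$, a contractible factor coming from the fibre $\PP^k$; since $\Sigma_{\PP^k}$ is a point, this attachment is a homotopy equivalence. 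Alternatively, and more cleanly, I would invoke Lemma \ref{lemma:bundle}: the inclusion of the dual complex of $D$ (away from $C$) into that of $\tilde D$ induces an isomorphism on $W_0H^i$ of every stratum, and then a Mayer--Vietoris / five-lemma argument on the simplicial level — exactly the diagram in the proof of Proposition \ref{6} — upgrades this to a homotopy equivalence $|\Sigma_D|\simeq|\Sigma_{\tilde D}|$.

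I would assemble these steps as follows: (i) by Weak Factorization reduce to a single smooth blow-up; (ii) by the stellar-subdivision lemma arrange the centre $C$ to be a stratum of $D$ or disjoint from it; (iii) when $C$ is disjoint, $\Sigma$ is unchanged; (iv) when $C=D(\Delta)$ is a divisorial stratum (a component of $D$), the blow-up is the identity transformation (see the Remark above) and $\Sigma$ is literally unchanged; (v) when $C=D(\Delta)$ is a positive-codimension stratum, identify $\Sigma_{\tilde D}$ with the stellar subdivision of $\Sigma_D$ at $v_\Delta$, whose geometric realization is canonically homeomorphic to $|\Sigma_D|$; (vi) track how the added $\PP^k$-bundle exceptional divisor $E$ modifies $\Sigma$, and use that $\Sigma_E$ fibres over $\Sigma_{D\cap C}$ with contractible (point) dual complex for the fibre to conclude the modification is a homotopy equivalence.

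The main obstacle I expect is step (v)--(vi): making precise the combinatorial description of how $\Sigma_{\tilde f^{-1}(y)}$ is built from $\Sigma_{f^{-1}(y)}$ when the blow-up both stellar-subdivides and introduces a higher-dimensional exceptional component. One must check that the new strata $E\cap D_{i_1}\cap\cdots$ and their irreducible components glue into precisely the cone $v_\Delta\ast L(\Delta,\Sigma_D)$ (possibly thickened by a projective-space dual complex which is contractible), and that the quasi-complex structure — the fact that intersections of simplices are unions of faces — is preserved; the bookkeeping of irreducible components of intersections, rather than the intersections themselves, is where the quasi-complex language is genuinely needed. Once this local picture is established, the homotopy equivalence follows formally, either by the explicit stellar/cone description or by the five-lemma argument transported from Proposition \ref{6}, now carried out at the level of simplicial spaces via the functor $\pi_0$ applied to the canonical simplicial resolutions of $D$ and $\tilde D$ as in Example \ref{ex:ncd}.
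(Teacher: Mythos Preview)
Your overall strategy---reduce by weak factorization to a single smooth blow-up and then analyze the effect on the dual complex---matches the paper's, but there are genuine gaps in the execution.

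First, the ordinary Weak Factorization theorem only gives smooth centres; it does not guarantee that the centre $C$ has simple normal crossings with the intermediate boundary divisor $D_i$. The paper needs and proves a strengthened version (Theorems~\ref{fact}, \ref{fact2}) ensuring exactly this. Without it, the local coordinate picture you rely on is unavailable. Second, even granting SNC of $C$ with $D$, your step~(ii) is incorrect: ``$C$ has SNC with $D$'' does \emph{not} force $C$ to be either disjoint from $D$ or equal to a stratum $D(\Delta)$. There is a third case, and it is the essential one: $C$ may be properly contained in a minimal stratum $D(\Delta_{D,C})$, cut out by extra coordinates not coming from the $D_i$. The ``stellar-subdivision lemma'' you cite does not eliminate this case; it is about turning a quasicomplex into a simplicial complex, not about moving a prescribed centre onto a stratum.

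The paper handles this third case directly: after the blow-up, $\Sigma_{D'}=\Sigma_D\cup (v_C*\Sigma_{D,C})$, where $\Sigma_{D,C}=\{\Delta\in\Sigma_D:D(\Delta)\cap C\neq\emptyset\}$, and an explicit linear retraction sending the new vertex $v_C$ to the barycentre of $\Delta_{D,C}$ exhibits the homotopy equivalence. Your proposed substitutes do not work. The observation that $\Sigma_{\PP^k}$ is a point is irrelevant: the dual complex records the intersection pattern of the exceptional divisor with the strict transforms of the $D_i$, not the internal topology of the fibre $\PP^k$. And the ``alternative'' via Lemma~\ref{lemma:bundle} plus a five-lemma argument only yields isomorphisms on $W_0H^i$, i.e.\ on the rational cohomology of the dual complexes; that is Proposition~\ref{65}, not a homotopy equivalence. (Incidentally, your worry in the second paragraph is a non-issue: since $\tilde f=f\circ\sigma$, one always has $\tilde f^{-1}(y)=\sigma^{-1}(D)$ on the nose, regardless of $\dim C$.)
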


When the singularity is nonisolated, the fibres are usually not
divisors. We consider a generalization later in theorem \ref{10},
which allows for general fibres.
The theorem above is a generalization of  the version of theorem for
isolated singularities given in \cite{step, step2}. 
The following theorems  \ref{8}, \ref{9} generalize  theorem \ref{7}, and  
the results  of \cite{payne} 
for log-resolutions.

\begin{thm} \label{8} If $\phi :X\dashrightarrow  X'$ is a birational map of two nonsingular projective varieties, such that its restriction $\phi_{|U}: U\dashrightarrow  U'$ to   open subsets $U\subset X$ and $U'\subset X'$ is proper. 
Assume that the boundary divisors  $D=X\setminus U$ and $D'=X'\setminus U'$ have SNC.
Then the dual complexes $\Sigma_D$ and $\Sigma_{D'}$ are homotopically equivalent
\end{thm}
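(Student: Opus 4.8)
The plan is to reduce to the case of a single blow-up along a smooth centre by invoking the Weak Factorization Theorem (\cite{wlodarczyk}, \cite{akmw}), applied not directly to $\phi$ but to the proper birational map $\phi_{|U}\colon U\dashrightarrow U'$, in its strengthened form that respects the open subsets. First I would recall that a refined version of factorization (theorems \ref{fact}, \ref{fact2} alluded to in the introduction) produces a sequence of smooth complete varieties $X=X_0\dashrightarrow X_1\dashrightarrow\cdots\dashrightarrow X_N=X'$ interpolating $\phi$, where each $X_k\to X_{k+1}$ or $X_{k+1}\to X_k$ is a blow-up along a smooth irreducible centre, and where the centre can be taken to lie either entirely inside the boundary $D_k=X_k\setminus U$ or entirely inside $U$. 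Moreover one may arrange that the boundary divisor $D_k$ has SNC at every stage, and that it remains the complement of a (fixed, up to the proper birational identification) open set $U$. This is the step where I expect the main obstacle: extracting from the Weak Factorization Theorem exactly the statement that the centres are compatible with the stratification by $D$ and that SNC is preserved throughout — this is essentially the content of the "slight refinement of factorization" promised in the introduction, and it requires care about how blow-ups interact with the normal crossing structure (the stellar subdivision picture recalled just before this theorem is the combinatorial shadow of this).

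Granting such a factorization, it suffices to treat one elementary step: $\sigma\colon X'\to X$ is the blow-up of a smooth irreducible centre $C$, with exceptional divisor $F$, both $D$ and $D'=\sigma^{-1}(D)$ (the total transform, which equals the strict transform plus $F$) having SNC. There are two cases. If $C\subset U$, i.e. $C$ is not contained in $D$, then $\sigma$ restricts to an isomorphism over a neighbourhood of $D$, hence $\Sigma_{D'}\cong\Sigma_D$ literally — the blow-up does nothing to the boundary. If $C\subseteq D$, then $C$ is a connected component of an intersection $D_{i_1\cdots i_k}=D^j_{i_1,\ldots,i_k}$ of boundary components (using that $C$ is smooth and irreducible and that $D$ has SNC), and the transformation $D\mapsto D'$ corresponds exactly to the stellar subdivision $\Sigma_{D'}=v_\Delta\cdot\Sigma_D$ of $\Sigma_D$ at the simplex $\Delta=\Delta^j_{i_1,\ldots,i_k}$, by the discussion preceding the theorem. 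Since stellar subdivision does not change the homeomorphism type of the geometric realization (it is a subdivision), we get $|\Sigma_{D'}|\cong|\Sigma_D|$, in particular a homotopy equivalence.

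Finally I would chain these elementary equivalences along the factorization: at each stage $k$ we obtain $|\Sigma_{D_k}|\simeq|\Sigma_{D_{k+1}}|$, either because the blow-up centre avoids the boundary or because the dual complex undergoes a stellar subdivision (or its inverse, a "stellar assembly," when the arrow points the other way), and composing gives $|\Sigma_D|\simeq|\Sigma_{D'}|$. One bookkeeping point to address: the factorization interpolating $\phi$ need not literally fix $U$ as a subvariety, only up to the proper birational identification $\phi_{|U}$; but since all we ever use about $U$ is which components of $D_k$ it determines (equivalently, which intersection strata get blown up), and these are tracked correctly by the centres being in $D_k$ versus in $U$, the argument goes through. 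The only genuinely delicate ingredient remains the refined factorization statement itself; everything after that is the combinatorics of stellar subdivisions, which is purely formal.
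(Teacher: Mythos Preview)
Your overall strategy---reduce via weak factorization to a single blow-up, then track the effect on the dual complex---is exactly the paper's. However, your case analysis of the single blow-up is incomplete, and the missing case is the one that carries the real content.

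You divide into ``$C\subset U$'' versus ``$C\subseteq D$'', and in the latter case you assert that $C$ must be a connected component of some intersection stratum $D_{i_1\cdots i_k}$, whence the effect on the dual complex is a stellar subdivision. This is false. The refined factorization (Theorem~\ref{fact2}) only guarantees that the centre $C$ has \emph{simple normal crossings} with $D$, not that $C$ is itself a stratum of $D$. Concretely, in local coordinates $x_1,\ldots,x_n$ with $D$ cut out by $x_1\cdots x_k=0$, the centre may be given by $x_r=\cdots=x_s=0$ with $r\le k<s$: then $C$ is contained in $D_r\cap\cdots\cap D_k$ but is strictly smaller (it has extra equations $x_{k+1},\ldots,x_s$ transverse to all the $D_i$). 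Blowing up such a $C$ does \emph{not} produce a stellar subdivision of $\Sigma_D$; instead it adds a new vertex $v_C$ and cones it over the subcomplex $\Sigma_{D,C}=\{\Delta\in\Sigma_D: D(\Delta)\cap C\neq\emptyset\}$, giving $\Sigma_{D'}=\Sigma_D\cup (v_C*\Sigma_{D,C})$. This is only homotopy equivalent (not homeomorphic) to $\Sigma_D$, and one needs to write down the retraction explicitly: send $v_C$ to the barycentre of the simplex $\Delta_{D,C}$ corresponding to the minimal stratum containing $C$, and check this extends simplicially. This is the paper's Case~3 and is the heart of the argument.

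A smaller point: even when $C$ is not contained in $D$, the factorization does not force $C$ to be disjoint from $D$, so your claim that $\sigma$ is an isomorphism over a neighbourhood of $D$ is not justified. The conclusion $\Sigma_{D'}=\Sigma_D$ is nonetheless correct in this case, but it requires the local coordinate argument the paper gives (its Case~1), not the disjointness you invoke.
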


The method of the proof is identical with the one used in Stepanov  \cite{step2}.
We just need  the following  simple strengthening of the Weak
Factorization Theorem (\cite{wlodarczyk,akmw}).

\begin{thm}\label{fact} (\cite{wlodarczyk,akmw}) Let $\phi:
  X\dashrightarrow X'$ be a birational map of smooth complete
  varieties such that its restriction $\phi_{|U}: U\to U'$ to   open
  susbsets $U\subset X$ and $U'\subset X'$ is proper and which is an
  isomorphism over an open subset $V\subset U,U'$. 
Assume that the boundary divisors  $D=X\setminus U$ and $D'=X'\setminus U'$ have SNC.
Then there exists
a weak factorization,  that is a sequence of birational maps $$X=X_0\buildrel \phi_0 \over \dashrightarrow  X_1
\buildrel \phi_1 \over \dashrightarrow \ldots \buildrel \phi_{n-1} \over
\dashrightarrow X_n=X'  ,$$ 
 such that  $\phi_i: X=X_i \dashrightarrow  X_{i+1}$ is either a blow-up or a blow-down  along smooth centres. Moreover
 all the complements $D_i:= X_i\setminus U_i$ are SNC divisors and all
 the centres have SNC with respect to $D_i$. Finally, the centres are disjoint from $V$.
\end{thm}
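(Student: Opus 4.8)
The plan is to obtain this by running the proof of the Weak Factorization Theorem of \cite{wlodarczyk} and \cite{akmw}, tracking at each stage not only the locus $V$ on which everything is an isomorphism (as in the classical statement) but also the larger open sets $U_i$ whose complements are the boundary divisors $D_i$. The classical version with a boundary divisor already yields a factorization of $\phi$ into blow-ups and blow-downs along smooth centres disjoint from $V$, with the successive transforms of a fixed SNC divisor staying SNC and the centres meeting them with normal crossings --- but under the stronger hypothesis that $\phi$ restricts to an \emph{isomorphism} between the complements of the boundaries. What has to be added is the relaxation of this hypothesis to $\phi|_U\colon U\to U'$ merely proper.

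First I would resolve $\phi$: pick a smooth complete $W$ with proper birational morphisms $p\colon W\to X$ and $q\colon W\to X'$ such that $q\circ p^{-1}=\phi$, both $p$ and $q$ are isomorphisms over $V$, and $p^{-1}(D)\cup q^{-1}(D')\cup\mathrm{Exc}(p)\cup\mathrm{Exc}(q)$ is SNC on $W$; this is done by resolving the closure of the graph of $\phi$ in $X\times X'$ away from the locus over $V$. Since $\phi|_U$ is a morphism, $q=\phi\circ p$ on $p^{-1}(U)$, hence $p^{-1}(U)\subseteq q^{-1}(U')$; put $U_W:=p^{-1}(U)$ and $D_W:=W\setminus U_W=p^{-1}(D)$, which is SNC, being a union of components of the divisor above. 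Then $p$ restricts to a proper birational morphism $U_W\to U$, and $q$ to a proper birational morphism $U_W\to U'$ (the latter equal to $\phi|_U$ precomposed with $p|_{U_W}$). So it is enough to factor the two morphisms $p$ and $q$ compatibly with the boundary data $(W,D_W)\to(X,D)$ and $(W,D_W)\to(X',D')$, and to concatenate the two factorizations at $W$; the endpoints $U_0=U$ and $U_n=U'$ are then automatic.

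To factor $p$ (and similarly $q$) compatibly with its boundary I would invoke the torification and toroidalization step of \cite{akmw}: choose toroidal structures on source and target whose boundary divisors contain $D_W$ and $D$ respectively, so that $p$ becomes a toroidal morphism; the weak factorization of a toroidal morphism is governed by combinatorial operations (stellar subdivisions and their inverses) on the associated polyhedral complexes, and these restrict compatibly to the subcomplex cut out by the chosen sub-boundary. This produces intermediate SNC divisors $D_i$ interpolating the given boundaries, with every centre meeting $D_i$ with normal crossings and disjoint from $V$, and $U_i:=X_i\setminus D_i$ the desired open sets. I expect the main obstacle to be precisely this bookkeeping: one must check that the sub-boundary carried through the whole factorization specializes to $D$ on $X$, to $D'$ on $X'$, and to one and the same divisor $D_W$ on $W$ from both sides, so that the two halves glue. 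This is exactly where the properness of $\phi|_U$ enters --- through the inclusion $q^{-1}(D')\subseteq p^{-1}(D)=D_W$ and the control it gives over the components of $D_W$ lying over the locus where $\phi^{-1}$ does not extend --- and it is the only ingredient not already supplied by the cited theorems. Granting it, all the stated conclusions follow from the corresponding properties of the toroidal weak factorization.
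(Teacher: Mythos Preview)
Your plan is in the right spirit but takes a longer route than the paper, and in one place it is circular. Both approaches rest on the same key observation: once one has a weak factorization in which every intermediate variety carries a \emph{large} SNC divisor $E_i$ (the complement of the isomorphism locus $V$) and every centre has SNC with $E_i$, then any union of components of $E_i$ --- in particular the successive transforms of $D$ --- is automatically SNC and the centres automatically meet it with normal crossings. The difference lies in how one manufactures this big boundary.

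The paper does not pass through a common roof $W$. It applies Hironaka principalization to the ideal sheaf $\mathcal I_{X\setminus V}$ on $X$, \emph{relative to the SNC divisor $D$}: this gives a sequence of blow-ups $\pi\colon Y\to X$ with smooth centres having SNC with the transforms of $D$, after which $Y\setminus V$ is a single SNC divisor containing $\pi^{-1}(D)$ as a sub-union of components. The same is done on $X'$ to obtain $Y'$. One then invokes the \emph{standard} weak factorization (with boundary equal to the complement of $V$) for the induced map $Y\dashrightarrow Y'$; each intermediate $Y_i$ admits a morphism to $Y$ or to $Y'$, the divisor $E_i=Y_i\setminus V$ is SNC with all centres SNC to it, and the pullback of $D$ (respectively $D'$) sits inside $E_i$ as a union of components. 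Concatenating with the principalization sequences at both ends gives the factorization of $\phi$.

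Your construction via $W$ reintroduces the original difficulty rather than dissolving it: when you try to factor $q\colon (W,D_W)\to (X',D')$ with $D_W=p^{-1}(D)$, the boundaries are related only by the containment $q^{-1}(D')\subseteq D_W$, not by inverse image, so the standard boundary version of weak factorization does not apply directly. You then appeal to tracking a ``sub-boundary'' through the toroidal factorization --- but that is precisely the statement being proved. The way out is to enlarge the boundary first (so the map becomes an isomorphism off it), which is exactly the principalization step; doing it once on each side, as the paper does, is simpler than embedding it in a toroidalization of both $p$ and $q$ and then worrying about gluing at $W$.
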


We prove a more general version:

\begin{thm}\label{fact2} (\cite{wlodarczyk, akmw}) Let $\phi : X\dashrightarrow  X'$ be a birational map of smooth complete varieties. Let $D^1\subset D^2\subset\ldots \subset D^k=D$ be SNC divisors on $X$, and 
$D'^1\subset D'^2\subset\ldots \subset D'^k=D'$ be SNC divisors on $X'$
 such that for $U^i:=X\setminus D^i$, and $U'^i:=X'\setminus D'^i$  the  restriction $f_{|U^i}: U^i\to U'^i$  is proper. Assume $\phi$ is an isomorphism over an open subset $V\subset U^k, U'^k$. Then there exists
a weak factorization 
 $$X=X_0\buildrel \phi_0 \over \dashrightarrow  X_1
\buildrel \phi_1 \over \dashrightarrow \ldots \buildrel \phi_{n-1} \over
\dashrightarrow X_n=X'  ,$$
as above, 
 such that for the open subsets $U^j_i\subset X_i$ all the complements of $D^j_i:= X_i\setminus U^j_i$ are SNC divisors and all the centres have SNC with $D^j_i$.
\end{thm}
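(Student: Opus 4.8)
\emph{The plan} is to revisit the proof of the Weak Factorization Theorem of \cite{akmw,wlodarczyk} --- the same proof that already yields Theorem \ref{fact} --- and to run it while carrying the entire flag $D^1\subset\cdots\subset D^k$ rather than a single divisor. The essential simplification is that the $D^j$ are nested, so $D^1\cup\cdots\cup D^k=D^k=D$; hence any geometric datum produced by that proof which is compatible with $D$ --- a blow-up centre having normal crossings with the accumulated boundary, or a toroidal structure whose boundary contains $D$ --- is automatically compatible with each $D^j$, because the relevant conditions all pass to sub-configurations of an SNC divisor. The only genuinely new content is the bookkeeping that records, at each stage $X_i$, which irreducible components of the boundary belong to the transform of $D^j$: one packages this as a nested sequence of subsets $R^{(i)}_1\subseteq\cdots\subseteq R^{(i)}_k$ of the boundary components of $X_i$, and sets $D^j_i=\bigcup_{\rho\in R^{(i)}_j}B_\rho$ and $U^j_i=X_i\setminus D^j_i$.

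I would carry this out in the two steps of the \cite{akmw} argument. \emph{Torification.} One first modifies $\phi$ by blow-ups along smooth centres having normal crossings with the transform of $D=D^k$ --- which is exactly the input of Theorem \ref{fact} --- until the birational map becomes a toroidal birational map between toroidal embeddings whose boundary contains the transforms of $D$ and $D'$ as unions of components, and (as in Theorem \ref{fact}) so that everything is an isomorphism over the locus $V$. Along such a blow-up $\sigma\colon X_{i+1}\to X_i$ of a centre $C_i$ one transforms the flag by the rule: $D^j_{i+1}$ is the strict transform of $D^j_i$, together with the exceptional divisor $E_i$ precisely when $C_i\subseteq D^j_i$. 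Since $C_i$ has normal crossings with $D_i=D^k_i\supseteq D^j_i$ and $D^j_i$ is a sub-configuration of the SNC divisor $D^k_i$, an induction shows each $D^j_i$ is again an SNC divisor and that $C_i$ has normal crossings with it.

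\emph{Toroidal factorization.} A proper birational map of toroidal embeddings factors into toroidal blow-ups and blow-downs (via polyhedral cobordisms and $\pi$-desingularization of fans, as in \cite{akmw,wlodarczyk}); combinatorially these are stellar subdivisions and their inverses. Under a subdivision, each $B_\rho$ with $\rho\in R^{(i)}_j$ keeps its strict transform, and a new ray $\bar\tau$ (the barycentre of a subdivided cone $\tau$) is put into $R^{(i+1)}_j$ exactly when $\tau$ contains a ray of $R^{(i)}_j$ --- equivalently, when the corresponding toroidal centre $\overline{O_\tau}$ lies in $D^j_i$. This rule visibly preserves the nesting $R^{(i)}_1\subseteq\cdots\subseteq R^{(i)}_k$, it is the same prescription for every index, and as before each $D^j_i$ is SNC while every centre $\overline{O_\tau}$ --- which has normal crossings with the full toroidal boundary --- has normal crossings with each $D^j_i$. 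Finally, the hypothesis that the given map restricts to a proper morphism $U^j\to U'^j$ for every $j$ guarantees that the transform of $D^j$ under the entire birational map is exactly $D'^j$, so the flag at $X_n=X'$ is indeed $D'^1\subset\cdots\subset D'^k$; the intermediate stages interpolate because the toroidal factorization factors the given map.

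The main obstacle is not a new geometric idea but the verification that the \cite{akmw} construction can be performed compatibly with all of $U^1,\ldots,U^k$ at once: one must check that the torification step admits centres having normal crossings with $D=D^k$ throughout (which is already the content of Theorem \ref{fact}) and that the combinatorial transform rule above is consistent along the whole sequence of modifications and matches $D^j$, $D'^j$ at the two ends. Once Theorem \ref{fact} is granted for the single divisor $D^k$, the passage to the flag is essentially formal, since all the SNC-type conditions in the conclusion are inherited by sub-configurations; so the real work is internal to the cited weak factorization machinery.
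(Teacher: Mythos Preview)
Your approach is viable but takes a genuinely different and more laborious route than the paper. You propose to re-enter the \cite{akmw,wlodarczyk} machinery and carry the nested flag $D^1\subset\cdots\subset D^k$ through torification and toroidal factorization via an inductive bookkeeping rule on components. The paper instead uses weak factorization as a black box, after a short preprocessing step. First, apply Hironaka's principalization to the ideal $\cI_{X\setminus V}$ (resp.\ $\cI_{X'\setminus V}$) relative to the SNC divisor $D$ (resp.\ $D'$), producing $\pi:Y\to X$ and $\pi':Y'\to X'$ by blow-ups whose centres have SNC with the transforms of $D$, so that $Y\setminus V$ and $Y'\setminus V$ become SNC divisors. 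Then invoke the ordinary weak factorization of $Y\dashrightarrow Y'$ relative to this boundary, using the standing feature (already in \cite{akmw,wlodarczyk}) that every intermediate $Y_i$ admits a \emph{morphism} $f_i$ to $Y$ or $f_i'$ to $Y'$. One then simply \emph{defines} $D^j_i:=f_i^{-1}\pi^{-1}(D^j)$ (or the primed version). This is automatically a divisor because it is the pullback of a Cartier divisor along a morphism of smooth varieties; it is contained in the SNC divisor $E_i:=Y_i\setminus V$ and hence is a union of its components, so $D^j_i$ is itself SNC; and since every centre has SNC with $E_i$, it has SNC with each $D^j_i$.

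The advantage of the paper's route is that the morphism-to-one-end property makes $D^j_i$ canonical: there is no inductive transform rule to maintain, no separate treatment of blow-downs, and no consistency check needed where the zig-zag switches direction or at the two ends. These are precisely the points you flag as still to be verified (``one must check that the combinatorial transform rule is consistent along the whole sequence and matches $D^j$, $D'^j$ at the two ends''); the paper's argument dissolves them rather than carrying them out. Your route would succeed with more work, but the paper's is shorter and never reopens the factorization proof.
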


\begin{proof}  By a version of   Hironaka principalization for an
  ideal sheaf $\cI_{X\setminus V}$  (respectively $\cI_{X'\setminus
    V}$)  and SNC divisor $D$ (respectively  $D'$ )   there exists a  sequence
  of blow-ups  $\pi: Y\to X$ (respectively $\pi': Y'\to X'$) with 
smooth centres having SNC  with the inverse images  of $D$
(respectively $D'$). Moreover, the total transform of  $\cI_{X\setminus V}$
(respectively  of $\cI_{X'\setminus V}$ ) 
is an ideal of an SNC divisor whose components have SNC with the inverse
images of $D$ (resp. $D'$)  (cf. \cite{kollar},\cite{wlodarczyk3}). 

There exists a weak factorization of $f:Y\to Y'$ such that all
intermediate steps $Y_i$ admit  a morphism  $f_i: Y_i\to Y$ or  $f'_i:
Y_i\to Y'$ (\cite{wlodarczyk}, \cite{akmw}). Thus  the full  transform
of $f_i^{-1}\pi^{-1}(\cI_{D^j})$ (or
$f'^{-1}_i\pi'^{-1}(\cI_{D'{^j}})$) is principal and the complement $D^j_i:=f_i^{-1}\pi^{-1}(D^j)=Y_i\setminus U^j_i$ is  of codimension one for any $j=1\ldots k$. Moreover all centres of the blow-ups have SNC with the complement divisor $E_i:= Y_i\setminus  V$ containing $D^j_i$. Thus $D^j_i$ is a SNC divisor on $Y_i$ and all the centres in the Weak Factorization have SNC with $D^j_i$.
\end{proof}
\vskip .1in

\begin{proof}[Proof of Theorem \ref{8}] The proof is an extension of the method mentioned in \cite{step}.
 By theorem \ref{fact2},  we can connect $D$ and $D'$ by blow-ups with smooth centres
which have SNC with intermediate divisors which are complements of  $U$.

Thus it is sufficient  to consider the effect of a single blow-up.
This is already done  in \cite{step2}.
We describe  this  in order to keep the presentation self contained and to provide a model for
what comes later.
We also introduce some convenient notation here.
Consider local coordinates $x_1,\ldots,x_n$ on an open neighborhood $U_p\subset X$  of some point $p\in C\cap D$. We can assume  that the coordinates $x_1,\ldots,x_k$ describe the components of the divisor $D$, and that the centre $C$ is described by $x_r,\ldots,x_s$, where $r\leq s$.
Consider three cases.

{\bf Case 1.} The centre $C$ is not contained in $D$ (but has SNC with $D$).

 This means that $C$  is not contained in a component $D_i$. Thus   no $x_i$, where $i\leq k$ vanishes on $C$, and consequently   $k< r$.
In this case the blow-up is defined by a  coordinate transformation $$x_1,\ldots,x_k,\ldots, x_{s-1}, x_{s}/x_r,\ldots, x_{r-1}/x_r, x_r,x_{r+1}\ldots,x_n$$ \noindent  which do not change the configuration of components and thus the dual complex of the exceptional divisor.
$$\Sigma_{D'}=\Sigma_D.$$

{\bf Case 2. } The centre $C=D_r\cap\ldots\cap D_s$ is the intersection of some divisorial components and corresponds to the face $\Delta_C:=\Delta_{r,\ldots,s}$. ($s<k$. )

 The blow-up of $C$ determines the stellar subdivision $\Sigma_{D'}=v_C\cdot \Sigma_D$ of $\Sigma_D$ at the centre $v_C$ of $\Delta_C=\Delta_{r,\ldots,s}$. 
 Thus topologically $\Sigma_D$ and $\Sigma_{D'}$ are homeomorphic.

{\bf Case 3.}  The centre $C$ is contained in the intersection $ D_r\cap\ldots\cap D_k$ and is not contained in any smaller intersection. ( $r<k\leq s$)

 Extend locally the set of SNC divisorial components of $D$ to the the set all divisors $D\cup E$ corresponding to the complete coordinate system $x_1,\ldots, x_n$.
 
Consider the simplex $\Delta$ corresponding to the complete system of coordinates $x_1,\ldots,x_n$.
Its face $\Delta_{1,\ldots,k}$ is also a  face of  $\Sigma_D$. 
Let  $\Delta_{D,C}:=\Delta_{s,\ldots,k}\subset \Delta_{1,\ldots,k}$ be the face of $\Sigma_D$ corresponding to the minimal intersection component $ D_r\cap\ldots\cap D_k$ which contain $C$.
The blow-up of $C$ corresponds to the stellar subdivision at the centre $v_C:=v_{s,\ldots,r}$ of the face $\Delta_C:=\Delta_{s,\ldots,r} \in \Sigma_E$.
After the stellar subdivision the face  $\Delta_{1,\ldots,k}$ of  $\Sigma_D$ remains unchanged . We introduce the new vertex $v_C=v_{s,\ldots,r}$ in the dual complex $\Sigma_D$ corresponding to the  exceptional divisor $E$. 
Denote by $D(\Delta)$, where $\Delta\in \Sigma_D$, the  stratum which is the intersection of divisors corresponding to vertices of $\Delta$.

Set $$\Sigma_{D,C}:=\{\Delta\in \Sigma_D \mid D(\Delta)\cap C \neq \emptyset\} $$
Note that by definition  if  $D(\Delta)\cap C \neq \emptyset$ then $D(\Delta)\cap D(\Delta_C) \neq \emptyset$.
Thus  

(*) the maximal simplexes of $\Sigma_{D,C}$ are in $\Star(\Delta_{D,C},\Sigma_D)$.

The new dual complex is given  by
$$\Sigma_{D'}=\Sigma_D\cup \,\,v_C*\Sigma_{D,C},$$
where
$$v_C*\Sigma_C:=\overline{\{\conv(v_C,\Delta)\mid \Delta \in \Sigma_C\} }$$
is a cone over $\Sigma_C$ with vertex $v_C$. In addition
$\conv(v_C,\Delta)$ is the simplex spanned by $v_C$, and  $\Delta$ (a cone over $\Delta$ with vertex $v_C$).

Let $v_{D,C}$ denote the barycentre of $\Delta_{D,C}$.
 The  complex $\Sigma_{D'}$ is homotopy equivalent to $\Sigma_D$. In
 one direction, the map  $\beta:\Sigma_{D'}\to\Sigma_{D}$ is defined
 on the vertex $v_C\to v_{D,C}$ retracts $v_C* \Sigma_C$ to the
 $\Sigma_C$ and is identical on $\Sigma_D$, and defines homotopy
 equivalence. The homotopy inverse map $\alpha:\Sigma_{D}\to \Sigma_{D'} $ is given by the inclusion.
Then $\beta\alpha=id:  \Sigma_{D} \to \Sigma_{D} $  and   $\alpha\beta: \Sigma_{D'}\to \Sigma_{D'}$  is homotopic to the identity via
$$v_C\mapsto (1-t)v_{D,C}+tv_C.$$
Note that, by the condition (*), the interval $[v_{D,C}, v_C]$ is contained in $\Delta_{C}$ as well as in its subset $\conv(\Delta_{D,C}, v_C)\in\Sigma_{D'}$.
The point $v_{D,C}$ in the above construction can be replaced by any other point in $\Delta_{D,C}$. 
\end{proof}

The above theorem can be easily extended to  multiple divisors. This case was also considered in \cite{payne} in the context of log resolutions, and it  is nearly the same as the previous one.
Theorem \ref{9} generalizes and implies the results in \cite{payne}. 

\begin{thm}\label{9}  Let $\phi: X\dashrightarrow X'$ be a birational map of smooth complete varieties. Let $D^1\subset D^2\subset\ldots \subset D^k$ be SNC divisors on $X$, and 
$D'^1\subset D'^2\subset\ldots \subset D'^k$ be SNC divisors on $X'$
 such that for $U^i:=X\setminus D^i$, and $U'^i:=X'\setminus D'^i$  the  restriction $\phi_{|U^i}: U^i\dashrightarrow  U'^i$  is proper. 
Then the corresponding sequences of topological spaces $\Sigma^1\subset\Sigma^2\subset\ldots \subset \Sigma^k$ and  $\Sigma'^1\subset\Sigma'^2\subset\ldots \subset \Sigma'^k$ 
are homotopically equivalent.

\end{thm}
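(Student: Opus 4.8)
The plan is to run the combinatorial argument of the proof of Theorem~\ref{8} simultaneously at all $k$ levels of the filtration, using the flexibility noted at the end of that proof (the contraction point may be chosen freely) to make the homotopy equivalences coherent. First I would apply the strengthened Weak Factorization Theorem~\ref{fact2} to reduce to the case in which $\phi$ is a single blow-up $\sigma\colon X'\to X$ along a smooth centre $C$ having SNC with every $D^j$; composing along the factorization, and using the inclusion as a homotopy inverse at the blow-down steps, then recovers the general statement. Write $\Sigma^j=\Sigma_{D^j}$ and $\Sigma'^j=\Sigma_{\sigma^{-1}D^j}$. Since $\Sigma^j\subseteq\Sigma^{j+1}$ and $\Sigma'^j\subseteq\Sigma'^{j+1}$ are inclusions of CW subcomplexes, it suffices to produce homotopy equivalences $\beta^j\colon\Sigma'^j\to\Sigma^j$ that strictly commute with these inclusions; such a compatible system is by definition a homotopy equivalence of the two sequences.

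Next I would carry out the local analysis of the proof of Theorem~\ref{8} near a point $p\in C$, choosing coordinates $x_1,\dots,x_n$ so that $D^j=V(x_1\cdots x_{m_j})$ with $m_1\le\cdots\le m_k$ and $C=V(x_r,\dots,x_s)$. For each fixed $j$ the blow-up of $C$ modifies $\Sigma^j$ in exactly one of the three cases of Theorem~\ref{8}: Case~1 if $m_j<r$ (so $\Sigma'^j=\Sigma^j$), Case~3 if $r\le m_j<s$ (a cone $v_C*\Sigma^j_{D,C}$ is attached), and Case~2 if $s\le m_j$ (a stellar subdivision at $\Delta_C=\Delta_{r,\dots,s}$). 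The monotonicity of the $m_j$ forces the case to move from~$1$ to~$3$ to~$2$ as $j$ grows, with any of the three blocks possibly empty. Inside the Case~3 block the faces $\Delta_{D^j,C}$ (those corresponding to $D_r\cap\cdots\cap D_{m_j}$) and the subcomplexes $\Sigma^j_{D,C}$ form increasing chains, and a bookkeeping check identical in spirit to the one in the proof of Theorem~\ref{8} shows $\Sigma'^j\subseteq\Sigma'^{j+1}$ for all $j$, including across the Case~$3\to$ Case~$2$ boundary, where $v_C*\Sigma^j_{D,C}$ lands inside the cones of $v_C\cdot\Sigma^{j+1}_D$.

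Then I would build the $\beta^j$ coherently. Let $j_0$ be the first index lying in the Case~3 block (if that block is nonempty) and fix once and for all a point $w\in\Delta_{D^{j_0},C}$. This simplex is a face of every $\Delta_{D^j,C}$ and of each $\Star(\Delta_{D^j,C},\Sigma^j_D)$, so declaring $\beta^j(v_C)=w$ and $\beta^j=\mathrm{id}$ on $\Sigma^j_D$, extended affinely over simplices, gives a well-defined map for every $j$ in the Case~3 and the Case~2 blocks; in the Case~1 block $\beta^j=\mathrm{id}$. By construction $\beta^{j+1}$ restricts to $\beta^j$ along $\Sigma'^j\hookrightarrow\Sigma'^{j+1}$. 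Each $\beta^j$ is a homotopy equivalence: in Case~3 the inclusion $\Sigma^j_D\hookrightarrow\Sigma'^j$ is a homotopy inverse of $\beta^j$ via the homotopy $v_C\mapsto(1-t)w+tv_C$, which stays inside $\Sigma'^j$ by condition~(*) in the proof of Theorem~\ref{8}; in Cases~1 and~2, $\beta^j$ is a self-map of $|\Sigma^j_D|$ (a stellar subdivision does not change the underlying space) that is homotopic to the identity because $w$ and the barycentre of $\Delta_C$ both lie in $\Delta_C$. Assembling these maps over the weak factorization produces the desired homotopy equivalence of sequences.

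The main obstacle is precisely the Case~$3\to$ Case~$2$ transition. The usual description of the effect of the blow-up in Case~2 is a stellar subdivision, whose canonical homeomorphism sends $v_C$ to the barycentre of $\Delta_C$, whereas compatibility with the lower Case~3 levels forces $\beta^j(v_C)$ to be the point $w$, which lies in a proper face of $\Delta_C$. The fix is to abandon the barycentric normalization and use instead the simplicial map determined by $v_C\mapsto w$, then re-check that it is still a homotopy equivalence — which it is, being homotopic to the identity through maps supported on $\Delta_C$. Verifying that the combinatorial inclusions $\Sigma'^j\subseteq\Sigma'^{j+1}$ and the simplexwise affineness of the $\beta^j$ are mutually consistent across all three cases is the only genuinely technical point, and it is a routine elaboration of the bookkeeping already present in the proof of Theorem~\ref{8}.
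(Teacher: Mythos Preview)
Your proposal is correct and follows essentially the same route as the paper: reduce to a single blow-up via Theorem~\ref{fact2}, identify the three regimes (your Cases~1,~3,~2) as $j$ increases, choose a single target point for $v_C$ lying in the smallest face $\Delta_{D^{j_0},C}$, and use the common straight-line homotopy $v_C\mapsto (1-t)w+tv_C$ at every level. The paper's $r$ and $\ell$ are your boundaries between the Case~1/3 and Case~3/2 blocks, and its choice $v_{C,r}$ (the barycentre of $\Delta_{C,r}$) is a specific instance of your $w$; your discussion of the Case~$3\to$~Case~$2$ transition is exactly the point the paper addresses by noting that $[v_C,v_{C,r}]\subset\Delta_C$ and hence lies in each $\conv(v_C,\Delta_{C,i})$.
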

\begin{proof}
As before, by the Weak factorization theorem it suffices to consider the effect of a single blow-up .
We use the notation from the previous proof.

Assume $C=D(\Delta_C)$ is the intersection of some components of $D^k$. In the other cases the reasoning is the same.
Let $r:=\min\{i \mid C\subset D^i\}$ and 
$ \ell:=\min \{i\mid  C=D(\Delta_{C,\Sigma^i})\}$ be the smallest index $j$ such that the centre $C$ is the intersection of the divisors in $D^j$.

 Define for $i=r,\ldots,k$ the sequence of
 subcomplexes $$\Sigma_C^i:=\{\Delta\in \Sigma^i \mid D(\Delta)\cap C
 \neq \emptyset\} .$$
Then  $$\emptyset\subset \Sigma^r_C\subset\ldots \subset \Sigma^\ell_C=\overline{\Star(\Delta_C,\Sigma_\ell)}\subset \ldots \subset \Sigma^k_C=\overline{\Star(\Delta_C,\Sigma_k)}$$

The blow-up transforms $\Sigma^1\subset\Sigma^2\subset\ldots \subset \Sigma^k$
into $\Sigma'^1\subset\Sigma'^2\subset\ldots \subset \Sigma'^k$, where 
\begin{enumerate}
\item $\Sigma'^i=\Sigma^i$ for $1 \leq i\leq r$
\item $\Sigma'^i=\Sigma^i\cup (v_C* \Sigma^i_C)$ for  $r\leq i< \ell$ 
\item $\Sigma'^i=v_C\cdot \Sigma^i$ for $\ell \leq i\leq k$  .
\end{enumerate}
Let $\Delta_{C,r}\in \Sigma_r$ be the face corresponding  to the smallest intersection component of $D^r$ containing $C$.  Denote by $v_{C,r }$ the barycentre of  $\Delta_{C,r}\in \Sigma_r$.
There exists a map of topological spaces $\alpha: \Sigma^k\to \Sigma'^k$ which is an identity, and whose restrictions
$\alpha_i:  \Sigma^i\to \Sigma'^i$ are inclusions for $r\leq i<\ell$, and identities for $1\leq i<r$, and $\ell\leq i\leq k$.  There exists a map $\beta: \Sigma'^k\to \Sigma^k$ which is defined
on the vertex 
$$v_C\mapsto v_{C,r}\in \Sigma^r\subset \Sigma^{r+1}\subset\ldots\subset \Sigma^k$$ 
and  identity on other vertices.
Its restriction $\beta_i: \Sigma'^i\to \Sigma^i$ is a linear map homotopic to the identity for $\ell\leq i\leq k$, a
 retracts $v_C* \Sigma^i_C$ to the $\Sigma^i_C$ and  $\Sigma'^i$ to  $\Sigma^i$ for $r\leq i<\ell$ and is identity for $i\leq r$.

The homotopy of  the compositions  $\alpha\beta: \Sigma^k\to\Sigma^k$ (and $\beta\alpha:\Sigma'^k\to\Sigma'^k$) with the identities are defined by 
 $$v_C\to (1-t)v_{C,r}+t\cdot v_C.$$
 As before the interval $[v_C,v_{C,r}]$ is contained in $\Delta_C$, as well as its subset $$\conv(v_C,\Delta_{C,r})\subset \conv(v_C,\Delta_{C,r+1})\subset \ldots\subset \conv(v_C,\Delta_{C,k})$$\noindent
 (since $\Delta_{C,r})\subset (v_C,\Delta_{C,r+1})\subset \ldots\subset \conv(\Delta_{C,k})$).
 The restriction of the homotopy equivalence $\Sigma_k\to \Sigma'_k$ defines homotopy equivalences between $\Sigma_i$ and $\Sigma'_i$, for $i\leq k$.

\end{proof}

Theorem \ref{8} can be used, for instance,  for studying fibers. In this situation the codimension one assumption is usually not satisfied. If the singular locus is not a finite set of points then the exceptional fibers are usually (generically) of lower dimension. That is why we generalize Theorems \ref{7}, and \ref{8}, \ref{9} dropping the codimension one assumption.

Given a union $\bigcup E_i$ of nonsingular closed subvarieties $E_i$,
we will say that it  has simple normal crossings (SNC) if around every
point $E$ is  locally
analytically equivalent to a union of intersections of coordinate hyperplanes.
 We can define the dual complex
$\Sigma_E$ as above.
We assume that $E_i$ are maximal components and they are assigned vertices $p_i$. The  simplices  $\Delta_{p_{i_1},\ldots,p_{i_k}}$ correspond to the components of 
 $E_{i_1},\cap \ldots\cap E_{i_k}$ as before.

\begin{thm} \label{10} Suppose that  $\phi :X\dashrightarrow  X'$ is a birational map of two nonsingular complete varieties, such that its restriction $\phi|_{U}: U\dashrightarrow  U'$ to   open sets $U\subset X$ and $U'\subset X'$ is proper. 
Assume that the boundary sets  $E=X\setminus U$ and $E'=X'\setminus U'$ are unions  of the SNC components.
Then the dual complexes $\Sigma_E$ and $\Sigma_{E'}$ are homotopically equivalent.
\end{thm}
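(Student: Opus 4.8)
The plan is to reduce Theorem~\ref{10} to the divisorial case already established in Theorem~\ref{8}, by passing to birational models on which the boundary is a genuine simple normal crossing divisor. Concretely, on $X$ I would successively blow up all maximal components $E_i$ of $E$ of codimension $\geq 2$, one at a time, at each stage taking the strict transform of the next such component; since the $E_i$ are maximal this process terminates, and it yields a proper birational morphism $\pi\colon\hat X\to X$ with $\hat X$ smooth complete such that $D:=\pi^{-1}(E)$ is a simple normal crossing divisor. Note $D=\hat X\setminus\pi^{-1}(U)$, and every $E_i$ is disjoint from $U$, so $\pi$ is an isomorphism over $U$ and $\pi^{-1}(U)\cong U$. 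I would do the same on the other side to obtain $\pi'\colon\hat X'\to X'$ with an SNC divisor $D'=\pi'^{-1}(E')=\hat X'\setminus\pi'^{-1}(U')$.

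The crucial claim is that these blow-ups do not change the dual complex up to homeomorphism: $|\Sigma_D|\cong|\Sigma_E|$ and $|\Sigma_{D'}|\cong|\Sigma_{E'}|$. The reason is local. Blowing up a maximal component $E_i$, which is one of the closed strata of the SNC configuration, replaces $E_i$ by its exceptional divisor $\mathcal E_i\cong\PP(N_{E_i/X})$, and for any stratum $E_{i_1}\cap\cdots\cap E_{i_k}$ of the configuration (indeed for any component of it) the corresponding stratum on the blow-up is a projective-space bundle over it. Projective-space bundles over a base do not change the set of components and do not merge or split strata, so the dual quasicomplex is carried isomorphically onto $\Sigma_D$, the vertex $p_i$ now labelling $\mathcal E_i$. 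That the total transform of the boundary stays SNC and that the stratification is preserved would be the routine verification in blow-up charts that makes up the bulk of the argument.

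With the claim in hand, $\phi$ induces a birational map $\hat\phi\colon\hat X\dashrightarrow\hat X'$ of smooth complete varieties whose restriction over $U$ is $\pi^{-1}(U)\cong U\xrightarrow{\;\phi|_U\;}U'\cong\pi'^{-1}(U')$, which is proper, and whose boundary divisors $D=\hat X\setminus\pi^{-1}(U)$, $D'=\hat X'\setminus\pi'^{-1}(U')$ have simple normal crossings. Theorem~\ref{8} then gives a homotopy equivalence $\Sigma_D\simeq\Sigma_{D'}$, and combining with the homeomorphisms of the previous paragraph yields the desired $\Sigma_E\simeq\Sigma_{E'}$.

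The main obstacle is precisely the local bookkeeping of the second paragraph: one must track, through an arbitrary sequence of blow-ups along the higher-codimension components (and their strict transforms), both the simple normal crossing property of the total transform of $E$ and the combinatorial type of the stratification. An alternative that sidesteps the reduction is to rerun the three-case analysis in the proof of Theorem~\ref{8} directly for a single blow-up with higher-codimension smooth centre $C$ having SNC with $E$: the only genuinely new feature is that blowing up a maximal component of codimension $\geq 2$ is no longer an identity transformation of $X$, yet it still induces the identity on $\Sigma_E$ after relabelling; the remaining cases—$C$ an entire stratum, or a proper subvariety of a stratum—go through exactly as in Cases~2 and~3, by a stellar subdivision and, respectively, by the cone construction $\Sigma_{E'}=\Sigma_E\cup(v_C*\Sigma_{E,C})$ together with its deformation retraction onto $\Sigma_E$.
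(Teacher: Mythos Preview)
Your overall strategy matches the paper's: reduce to the divisorial case by a sequence of blow-ups that replaces $E$ by an SNC divisor $D$ with $\Sigma_D \simeq \Sigma_E$, and then invoke Theorem~\ref{8}. The paper carries this out via the Lemma immediately following Theorem~\ref{10}.

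However, your specific implementation of the reduction contains a genuine gap. You blow up the maximal components $E_i$ and claim that this leaves the dual complex unchanged up to homeomorphism, on the grounds that each stratum $E_{i_1} \cap \cdots \cap E_{i_k}$ is replaced by a projective-space bundle over itself. This fails because a maximal component $E_i$ can \emph{contain} the intersection $E_j \cap E_k$ of two other maximal components without containing either of them. Concretely, in $\mathbb{A}^4$ take $E_1 = \{x_1 = x_2 = 0\}$, $E_2 = \{x_1 = x_3 = 0\}$, $E_3 = \{x_2 = x_3 = 0\}$; this is an SNC configuration in the paper's sense, with $\Sigma_E$ a $2$-simplex, and $E_2 \cap E_3 = \{x_1 = x_2 = x_3 = 0\} \subset E_1$. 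Blowing up $E_1$ separates the strict transforms: $\tilde E_2$ and $\tilde E_3$ become disjoint (one lives entirely in one blow-up chart, the other in the other), so after carrying out your sequence of blow-ups the dual complex is a path with three vertices and two edges, not a $2$-simplex. Your ``relabelling'' picture---and the identical claim in the alternative approach you sketch at the end---breaks down precisely here.

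The paper's Lemma avoids this by blowing up the \emph{smallest} intersection components first, and it only asserts a \emph{homotopy equivalence}, not a homeomorphism. Each such blow-up replaces a maximal simplex $\Delta_0$ of $\Sigma_E$ by a cone $v_0 * \Sigma_{\Delta_0}$ over a proper subcomplex, and the proof constructs an explicit deformation retraction of $\Delta_0$ onto this cone via iterated stellar subdivisions and linear collapses. That combinatorial retraction is the substantive content of the argument and is not captured by the projective-bundle bookkeeping you propose.
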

\begin{proof} The theorem follows from the  Lemma:

\end{proof}
\begin{lemma} Let  $E=X\setminus U$  be an SNC set in a nonsingular
  variety $X$  with components $E=\bigcup E_i$. There exists a sequence of blow-ups  $X'\to X$ with centres which are intersections of the strict transforms of the maximal components of $E$ such that $D=X'\setminus U$ is a SNC divisor, and there is a homotopy equivalence $\Sigma_D\to \Sigma_E$.
\end{lemma}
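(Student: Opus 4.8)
The plan is to build $X'$ by a finite sequence of blow-ups, each of which replaces one maximal component of codimension $\ge 2$ by a divisorial exceptional divisor, while keeping control of the dual complex. I induct on the number $\nu(E)$ of maximal components of $E$ of codimension $\ge 2$. If $\nu(E)=0$ then every maximal component of $E$ is a divisor, so $E$ is already an SNC divisor and one takes $X'=X$. Otherwise, choose a maximal component $E_{i_0}$ of codimension $c\ge 2$, let $\sigma\colon X'\to X$ be the blow-up of $X$ along $E_{i_0}$ with exceptional divisor $D_{i_0}=\sigma^{-1}(E_{i_0})$, and set $E'=\sigma^{-1}(E)$. Working in the charts of $\sigma$ and using that near each point $E$ is a union of coordinate subspaces with $E_{i_0}$ among them, one checks by a direct local computation that $E'=D_{i_0}\cup\bigcup_{j\ne i_0}\widetilde{E_j}$ is again, near each point, such a union; hence $E'$ is an SNC set in the smooth variety $X'$, its maximal components are precisely $D_{i_0}$ together with the strict transforms $\widetilde{E_j}$, and each $\widetilde{E_j}$ has the same codimension as $E_j$. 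Thus $\nu(E')=\nu(E)-1$, while $X'\setminus E'=X\setminus E=U$. Iterating, after finitely many such blow-ups — all with centres iterated strict transforms of maximal components of $E$ — one arrives at $X'$ with $D:=X'\setminus U$ an SNC divisor.

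It remains to see that a single blow-up $\sigma$ as above induces a homotopy equivalence $\Sigma_{E'}\to\Sigma_E$; composing these (and a homotopy inverse of the collapses below) yields the required $\Sigma_D\to\Sigma_E$. Let $p_{i_0}$ be the vertex of $\Sigma_E$ attached to $E_{i_0}$ and $q_{i_0}$ the vertex of $\Sigma_{E'}$ attached to $D_{i_0}$, and put $A=\overline{\Star(p_{i_0},\Sigma_E)}$, $A'=\overline{\Star(q_{i_0},\Sigma_{E'})}$. Both closed stars are cones: for a simplex $\Delta^b_{i_0,j_1,\dots}$ through $p_{i_0}$ the face opposite $p_{i_0}$ is $\Delta^a_{j_1,\dots}$, where $a$ is the component of $E_{j_1}\cap\cdots$ containing $b$, and this again lies in $\Sigma_E$ (similarly for $q_{i_0}$), so $A$ and $A'$ are contractible. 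Moreover $\Delta^a_{j_1,\dots}$ lies in $A$ exactly when $a\cap E_{i_0}\ne\emptyset$, and likewise for $A'$ with $D_{i_0}$. Since $\sigma$ is an isomorphism over $X\setminus E_{i_0}$, strict transformation sets up a bijection $a\leftrightarrow\widetilde a$ between the components $a$ of the intersections $E_{j_1}\cap\cdots\cap E_{j_k}$ with all $j_l\ne i_0$ and $a\not\subseteq E_{i_0}$, and the components $\widetilde a$ of $\widetilde{E_{j_1}}\cap\cdots\cap\widetilde{E_{j_k}}$ not contained in $D_{i_0}$; this bijection respects the face relations, because a face $\Delta^c$ of such a $\Delta^a$ has $c\supseteq a$, hence $c\not\subseteq E_{i_0}$, so $c$ has a strict transform, and it carries faces lying in $A$ to faces lying in $A'$ and faces outside $A$ to faces outside $A'$. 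Hence it induces a homeomorphism $\Sigma_E/A\cong\Sigma_{E'}/A'$. Since $A$ and $A'$ are contractible subcomplexes, collapsing them is a homotopy equivalence, and we conclude $\Sigma_{E'}\simeq\Sigma_E$.

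The main obstacle is the verification in these two local/combinatorial steps: first, that blowing up a maximal component of codimension $\ge 2$ preserves the ``SNC set'' property and leaves the codimensions of the other strict transforms unchanged (a chart-by-chart computation on unions of coordinate subspaces, together with the fact that $\widetilde{E_{j_1}}\cap\cdots\cap\widetilde{E_{j_k}}$ is the strict transform of $E_{j_1}\cap\cdots\cap E_{j_k}$); and second, that the bijection $a\leftrightarrow\widetilde a$ is genuinely an isomorphism of the relevant CW data — the key point being that a simplex whose stratum misses $E_{i_0}$ has no face whose stratum is contained in $E_{i_0}$, so that collapsing $A$ and $A'$ destroys ``the same'' information on both sides. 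Note that, in contrast with the blow-ups of intersections of boundary components in Theorems~\ref{8} and~\ref{9} (which induce stellar subdivisions and hence homeomorphisms of dual complexes), here the combinatorial type of the dual complex can genuinely change, since simplices whose strata are absorbed into $E_{i_0}$ simply disappear; this is precisely why one obtains only a homotopy equivalence, realised through the two contractible collapses rather than by a single simplicial map.
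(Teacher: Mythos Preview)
Your argument is correct, but it follows a genuinely different route from the paper's.

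The paper proceeds by repeatedly blowing up the \emph{smallest} intersection component $E(\Delta_0)$ (corresponding to a maximal simplex $\Delta_0$ of $\Sigma_E$), and then constructs an explicit deformation retraction from the old dual complex onto the new one by a sequence of stellar subdivisions and linear collapses $v_i\mapsto v_0$. This eliminates the simplices of $\Sigma_E$ one at a time, from the top-dimensional ones down, until all remaining strata are divisorial. Your approach instead blows up a single \emph{maximal} component $E_{i_0}$ of codimension $\ge 2$ and inducts on the number of such components; the homotopy equivalence comes from collapsing the contractible closed stars of $p_{i_0}$ and $q_{i_0}$ and identifying the resulting quotients via strict transforms.

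Your approach is more economical: it uses far fewer blow-ups (one per non-divisorial maximal component rather than one per intersection stratum), and the homotopy argument is short once one has checked the two local facts you flag. The paper's approach, on the other hand, produces explicit simplicial retractions at every stage, which is what allows it to be upgraded verbatim to the filtered setting of Corollary~\ref{12}; your quotient-by-closed-star argument would need some additional care there, since the stars in the various $\Sigma_{E^j}$ need to be collapsed compatibly. Both verifications you identify as the ``main obstacle'' do go through in the local SNC model: in a chart where $E_{i_0}=\{x_1=\dots=x_c=0\}$ and $E_j=\{x_i=0:i\in J\}$ with $J\not\supset\{1,\dots,c\}$, one sees directly that $\widetilde{E_j}$ is again a coordinate subspace of the same codimension and that $\bigcap_l\widetilde{E_{j_l}}$ is exactly the strict transform of $\bigcap_l E_{j_l}$ when the latter is not contained in $E_{i_0}$, and empty otherwise.
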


\begin{proof} Consider the smallest possible intersection component $E(\Delta_0)$  (i.e. intersection of some $E_i$) corresponding to the maximal face $\Delta_0\in \Sigma_E$ of a certain dimension $k$. Consider the subcomplex $\Sigma_{E,\Delta_0}$ consisting of all faces $\Delta$ of $\Sigma_E$ such that the corresponding intersection components $E(\Delta)$ are not contained in $E(\Delta_0)$. Note that $\Sigma_{E,\Delta_0}$ intersects with $\Delta_0$ along a certain subcomplex $\Sigma_{\Delta_0}$. The set $\Sigma_{E,\Delta_0}^c:=\Sigma\setminus\Sigma_{E,\Delta_0}$ consists of simplices $\Delta$ for which $E(\Delta)$ are contained in $E(\Delta_0)$. By minimality condition for $E(\Delta_0)$, this  means $E(\Delta)=E(\Delta_0)$.
The simplices in $\Sigma_{E,\Delta_0}^c$ are the faces of $\Delta_0$ with the property $E(\Delta)=E(\Delta_0)$.

 After blow-up along the centre $C=E(\Delta_0)$, the set of maximal components will be enlarged by adding the exceptional divisor $D_0$ to the previous set $\{E_i\}$. 
 
 (If  $E(\Delta_0)$ is not a maximal component  the set of ``old'' maximal components $E_i$ remains the same  after the blow-up. If $E(\Delta_0)=E_i$ is  a maximal component then the corresponding strict transform of $E_i$ disappear.).

The complex $\Sigma_{E,\Delta_0}$ will remain the same after blow-up. The components defined by the afces in $\Sigma_{E,\Delta_0}^c$ will disappear.
The simplices of  $\Sigma_{\Delta_0}=\Sigma_{E,\Delta_0}\cap \Delta_0$ will be joined with a new vertex $v_0$ corresponding to the exceptional divisor $D_0$.
The corresponding components $E(\Delta)$ intersect the centre but do not contain it. Thus their strict transform will intersect $D_0$.
 The simplices will form a complex $v_0*\Sigma_{\Delta_0}$. 
 
 Then,
the new complex is $$\Sigma_{E,1}:= \Sigma_{E,\Delta_0}\cup v_0*\Sigma_{\Delta_0}.$$
 There is a deformation retraction of $\Delta_0\to v_0*\Sigma_{\Delta_0}$ which is identity 
 on $ v_0*\Sigma_{\Delta_0}$. Consider the stellar subdivision $v_0\cdot \Delta_0$ of $\Delta_0$ at its  barycentre $v_0$.
 
 Let $\{v_i\}_{i\in I_{k-1}}$ denotes the set  of all the barycentres of all the $k-1$ dimensional faces in $(\Sigma_{E,\Delta_0}^c)\subset\partial(\Delta)$. Note that $\{v_i\}_{i\in I_{k-1}}$ lie in pairwise distinct simplices, and the star subdivisions at $v_i$ commute.
Take the stellar subdivisions $$\{v_i\}_{i\in I_{k-1}}\cdot v_0\cdot \Delta_0$$ of $v_0\cdot \Delta_0$ at  the centres $\{v_i\}_{i\in I_{k-1}}$. The linear map transform all $v_i\mapsto v_0$, and is identity on all other vertices of $\{v_i\}_{i\in I_{k-1}}\cdot v_0\cdot \Delta_0$. It  defines the homotopic retraction of $$v_0\cdot \Delta_0=v_0* \partial(\Delta_0)\simeq  v_0*(\{v_i\}_{i\in I_{k-1}}\cdot \partial(\Delta_0))\to v_0* ((\Sigma_{E,\Delta_0}^c)^{k-2}\cup \Sigma_{\Delta_0}),$$
\noindent where 
$\partial(\Delta_0)$ consists of proper faces of $\Delta_0$, and
$(\Sigma_{E,\Delta_0}^c)^{k-2}$ consists of all faces of $\Sigma_{E,\Delta_0}^c$ of dimension $\leq k-2$.
Then take stellar subdivision of  $v_0* ((\Sigma_{E,\Delta_0}^c)^{k-2})$ at  the centres $\{v_i\}_{i\in I_{k-2}}\subset \partial(\Delta_0)$ of all the $k-2$ dimensional faces $(\Sigma_{E,\Delta_0}^c)\subset\Delta$. The linear map $v_i\to v_0$ defines the retraction
$$ v_0* ((\Sigma_{E,\Delta_0}^c)^{k-2}\cup \Sigma_{\Delta_0})\simeq v_0* (\{v_i\}_{i\in I_{k-2}}\cdot (\Sigma_{E,\Delta_0}^c)^{k-2}\cup \Sigma_{\Delta_0})\to v_0* ((\Sigma_{E,\Delta_0}^c)^{k-3}\cup \Sigma_{\Delta_0}).$$
By continuing this process we  get the retraction $\Delta_0\to v_0*\Sigma_{\Delta_0}$ which extends to the homotopic retraction
$$\Sigma_{E}= \Sigma_{E,\Delta_0}\cup \Delta_0\to \Sigma_{E,1}= \Sigma_{E,\Delta_0}\cup v_0*\Sigma_{\Delta_0}.$$

The complex  $\overline{\Sigma}_{E,1}:=\Sigma_{E,\Delta_0}$ is a subcomplex  $\Sigma_{E,1}$ which corresponds to the strict transforms of the intersection components of $E$.
 The subcomplex  $v_0*\Sigma_{\Delta_0}$  in   the complex 
 $\Sigma_{E,1}= \overline{\Sigma}_{E,1}\cup v_0*\Sigma_{\Delta_0}$, corresponds to the intersections of $D$ with the   components in $\Sigma_{\Delta_0}\subset \overline{\Sigma}_{E,1}$.

Suppose that the exceptional divisor $D_0$ contains a strict transform of an intersection component $E(\Delta)$. Then  the center of blow-up $E(\Delta_0)$ on $X$ contains $E(\Delta)$. By minimality $E(\Delta)=E(\Delta_0)$.
 But after the blow-up at $C=E(\Delta_0)$, the strict transforms of the  components intersecting at $E(\Delta_0)$ and being normal crossings do not intersect anymore on $X_1$.
 
  Thus  $D_0$ does not contain any strict transforms of the intersection components of $E$.
Since   $D_0$  has SNC with these components it yields that  $D_0$ and the  strict transforms of the intersection components of $E$ are described locally by different coordinates in a certain coordinate system. In particular, for any $\Delta,\Delta' \in \overline{\Sigma}_{E,1}$  we have the implication:
$$\emptyset\neq E(\Delta)\cap D \subset E(\Delta') \quad \Rightarrow \quad E(\Delta) \subset E(\Delta')\quad\quad\quad\quad \hfill{(*)}$$

 Let  $E(\Delta_1)$, where $\Delta_1\in \overline{\Sigma}_{E_1}$ be the smallest possible intersection component (not contained in $D$) corresponding to the maximal face $\Delta_1\in \
 \overline{\Sigma}_{E_1}$ of a certain dimesion $k_1$.
If $E(\Delta_1)$ intersects $D$  the construction and reasoning are the same as before and we get  (with the relevant notation):
$$\Sigma_{E,1}= \Sigma_{E,1,\Delta_1}\cup \Delta_1\to \Sigma_{E,2}= \Sigma_{E,1,\Delta_1}\cup v_1*\Sigma_{\Delta_1}= \overline{\Sigma}_{E_2}\cup v_0*\Sigma_{\Delta_0}\cup v_1*\Sigma_{\Delta_1}.$$

Assume that $E(\Delta_1)$ intersects $D$. Consider the subcomplex $\Sigma_{E_1,\Delta_1}$ consisting of all faces $\Delta$ of $\Sigma_{E,1}$ such that the corresponding intersection component $E(\Delta)$ are not contained in $E(\Delta_1)$. 

Note that $\Sigma_{E,1,\Delta_1}^c:=\Sigma_{E,1}\setminus\Sigma_{E,1,\Delta_1}$ consists of simplices $\Delta$ for which $E(\Delta)$ are  contained in $E(\Delta_1)$. By minimality condition, this  means $E(\Delta)=E(\Delta_1)$ or $E(\Delta)=E(\Delta_1)\cap D$. In particular, all the simplices of $\Sigma_{E,1,\Delta_1}^c$ are faces of $\conv(\Delta_1,v_1)$.
Also, by (*) a face $\Delta$ of $\Delta_1$ is in $\Sigma_{E,1,\Delta_1}^c$ if $\conv(\Delta,v_0)$ is in $\Sigma_{E, 1,\Delta_1}^c$ (and vice versa).

Consider the intersection  $$\Sigma_{\Delta_1}:=\Sigma_{E,1,\Delta_1}\cap\Delta_1.$$
By  above $\Sigma_{E, 1,\Delta_1}$ intersects with $\conv(\Delta_1,v_0)$  along a subcomplex
  $$\Sigma_{E,1,\Delta_1}\cap\conv(\Delta_1,v_0)=v_0* \Sigma_{\Delta_1}.$$  
 After blow-up at $C=E(\Delta_1)$  we construct the exceptional divisor $D_1$ corresponding
 to the new vertex $v_1$ which is a barycentre of $\Delta_1$. The subcomplex 
 $\Sigma_{E_1,\Delta_1}$ remains unchanged.
 Consider the stellar subdivision $v_1\cdot \Delta_1$ of $\Delta_1$ at its  barycentre $v_1$.

 There is a deformation retraction of $$v_0* \Delta_1:=\conv(\Delta_1,v_0)\simeq v_1\cdot(v_0*\Delta_1) \to v_1* v_0*\Sigma_{\Delta_1}$$ which is identity 
 on $ v_1*\Sigma_{\Delta_1}$. Take the stellar subdivisions $$\{v_i\}_{i\in I_{k_1-1}}\cdot v_1\cdot (v_0*\Delta_1)$$ \noindent  of $v_1\cdot (v_0*\Delta_1)$ at  the barycentres  $\{v_i\}_{i\in I_{k_1-1}}\subset \partial(\Delta_1)$ of all the $k_1-1$ dimensional faces $\Sigma_{E_1,\Delta_1}^c\cap \partial(\Delta_1)\subset \partial(\Delta_1)$. 

Construct the linear map which transforms $v_i$ to $v_1$, and which is identity on all other vertices of  $\{v_i\}_{i\in I_{k_1-1}}\cdot v_1\cdot \conv(\Delta_1,v_0)$. It  defines the homotopic retraction  $$ v_0*( v_1\cdot\Delta_1)=v_1* v_0* \partial(\Delta_1)\to v_1* v_0*(((\Sigma_{E,\Delta_1}^c)^{k_1-2}\cap \partial(\Delta_1)) \cup \Sigma_{\Delta_1}),$$
\noindent where $(\Sigma_{E,\Delta_1}^c)^{k_1-2}$ consists of all faces of $\Sigma_{E,\Delta_1}^c$ of dimension $\leq k_1-2$.
Then take stellar subdivision of at  the centres $\{v_i\}_{i\in I_{k_1-2}}\cdot v_1\cdot (v_0*\Delta_1)$ at the barycentres of all the $k_1-2$ dimensional faces $(\Sigma_{E,\Delta_1}^c)\cap \partial(\Delta_1)$. The linear map $v_i\to v_1$  defines the retraction
$$ v_1* v_0* (((\Sigma_{E,\Delta_1}^c)^{k_1-2}\cap  \partial(\Delta_1)) \cup \Sigma_{\Delta_1})\to v_1*v_0* (((\Sigma_{E,\Delta_1}^c)^{k_1-3}\cap \partial(\Delta_1))\cup \Sigma_{\Delta_1}).$$
By continuing this process,
 we  get the retraction $v_0*\Delta_1\to v_1* v_0*\Sigma_{\Delta_1}$ which extends to the deformation retraction
$$\Sigma_{E,1}= \Sigma_{E_1,\Delta_1}\cup v_0*\Delta_1=\Sigma_{E_1,\Delta_1}\cup v_0*v_1*\partial(\Delta_1) \to  \Sigma_{E,2}= \Sigma_{E,1,\Delta_1}\cup v_0* v_1*\Sigma_{\Delta_1}.$$
Here $ \overline{\Sigma}_{E,2}:= \Sigma_{E,1,\Delta_1}$ is the subcomplex of $\Sigma_{E,1}$ and $\Sigma_{E,2}$ corresponding to the nonempty strict transforms  of of the $E$-components.

We construct new divisors $D_1$ and $D_2$ intersecting transversally with the strict transforms of the $E$-components. The subcomplex $\overline{\Sigma}_{E,2}$ of $\Sigma_{E,2}$  defined by $E$-components is contained as a proper subset in $\overline{\Sigma}_{E,1}\subsetneq \Sigma_{E}$.
By continuing this algorithm, we construct a sequence of blow-ups such that the subcomplex $\overline{\Sigma}_{E,k}$ of $\Sigma_{E,k}$ is empty. The resulting boundary set becomes divisorial. Note that  we eliminate one by one all the simplices of $\Sigma_E$. The vertices of $\Sigma_E$ will be eliminated upon the blowing up the corresponding maximal components at the very end of the process.
\end{proof}

The lemma above can be extended to the multiple subvarieties case.

\begin{cor}\label{12} Let $E=X\setminus U $   be   a union  of the closed SNC components 
$E=\bigcup E_i$ on a nonsingular variety $X$. Let
$E^1\subset E^2\subset\ldots\subset E^r=E$ be the filtration of unions of some components $E_i$. There exists a sequence of blow-ups  $X'\to X$ with centres which are intersections of the strict transforms of maximal components of $E^r$ such that the inverse images $D^j$ of $E^j$ is a SNC divisor, for $j=1,\ldots,r$ and there is a homotopy equivalence of topological spaces $\Sigma_{D^1}\subset \Sigma_{D^2}\subset\ldots\subset \Sigma_{D^r}$ and $\Sigma_{E^1}\subset \Sigma_{E^2}\subset\ldots\subset \Sigma_{E^r}$ .
\end{cor}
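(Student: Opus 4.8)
The plan is to run the algorithm of the preceding Lemma applied to the top member $E=E^r$ of the filtration, and to check that every step of that algorithm is compatible with the filtration --- much as Theorem~\ref{9} refines Theorem~\ref{8}. Concretely, I would first apply the Lemma to $E=E^r$: this produces a sequence of blow-ups $\pi:X'\to X$ whose centres are (strict transforms of) intersection components of the maximal components of $E^r$, after which $D^r=X'\setminus U$ is an SNC divisor and $\Sigma_{D^r}\simeq\Sigma_{E^r}$. Since every component of each $E^j$ is also a component of $E^r$, and every centre of the algorithm lies in the then-current total transform of $E^r$, the total transform $D^j=\pi^{-1}(E^j)$ is a union of some of the components of the SNC divisor $D^r$, hence is itself SNC. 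This gives the first assertion, with one and the same sequence of blow-ups working for all of $E^1\subset\cdots\subset E^r$ at once.

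For the homotopy statement I would use the inclusions to regard each $\Sigma_{E^j}$ as the subcomplex of $\Sigma_{E^r}$ spanned by the vertices of the components of $E^j$ and closed under the face relation, and likewise the $\Sigma_{D^j}$ at every intermediate stage. Then I would examine a single blow-up of the algorithm, at a centre $C=E(\Delta_0)$ with $\Delta_0$ a maximal face realizing a smallest-dimensional stratum at that stage, and determine its effect on $\Sigma_{E^j}$ for each $j$. Writing $\Delta_0^j\subseteq\Delta_0$ for the face spanned by those vertices of $\Delta_0$ belonging to $E^j$, there are three cases parallel to Cases~1--3 in the proof of Theorem~\ref{8}: if $\Delta_0^j=\Delta_0$, so $C$ is an intersection component of $E^j$, the blow-up acts on $\Sigma_{E^j}$ by the Lemma's move (introduce a new vertex $v_0$; deformation-retract $\Sigma_{E^j,\mathrm{old}}$ onto $\Sigma_{E^j,\mathrm{new}}$); if $\Delta_0^j=\emptyset$, then $C$ lies in no component of $E^j$ and, having SNC with all of $E$, the blow-up leaves $\Sigma_{E^j}$ unchanged; and if $\emptyset\ne\Delta_0^j\subsetneq\Delta_0$, then $C$ is a proper closed subvariety of the $E^j$-stratum $E(\Delta_0^j)$, and the blow-up acts by the ``Case~3'' move, cone-joining $v_0$ to the subcomplex of $E^j$-strata meeting $C$ and then deformation-retracting back. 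In every case the move on $\Sigma_{E^j}$ is literally the restriction of the corresponding move on $\Sigma_{E^r}$. Since all levels use the same centre $C$ and the same new vertex $v_0$, and the subcomplex inclusions $\Sigma_{E^1}\subset\cdots\subset\Sigma_{E^r}$ are preserved at each stage, the deformation retractions at the various levels fit together into a single retraction of filtered spaces; iterating over all blow-ups of the algorithm yields the desired homotopy equivalence of the two filtrations.

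The step I expect to be the real obstacle is the compatibility of the homotopies in the ``Case~3'' situation: one has to check, just as condition~(*) is used in the proofs of the Lemma and of Theorem~\ref{9}, that for each $j$ the segment along which the homotopy slides the new vertex $v_0$ --- from $v_0$ to a barycentre of the appropriate face of $\Delta_0^j$ --- stays inside the cone $\conv(v_0,\Delta_0^j)$, and that these cones, hence these segments, are nested as $j$ increases, so that the homotopies chosen at the different levels genuinely agree on the smaller subcomplexes. Everything else --- termination of the algorithm, the SNC property of the $D^j$, and the trichotomy of moves above --- follows directly from the facts already established for $E^r$ in the Lemma, so this filtered bookkeeping is essentially the entire content of the corollary.
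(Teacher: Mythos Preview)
Your approach is essentially the paper's: run the algorithm of the preceding Lemma on $E^r$ and track its effect level by level, exactly parallel to the way Theorem~\ref{9} refines Theorem~\ref{8}. The paper is in fact slightly less careful than you about the case analysis---it lets $j$ be the least index for which $C$ is an intersection component of $E^j$ and asserts that for $i<j$ the blow-up leaves $\Sigma_{E^i}$ unchanged, which as your Case~3 shows is not literally correct when $C\subset E^i$ but $E(\Delta_0^i)\supsetneq C$; the repair is the cone-and-retract move you describe, just as in Theorem~\ref{9}.

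Where the paper is more explicit than your sketch is precisely at the point you flag as the obstacle. Rather than trying to choose, for each level separately, a target barycentre lying in a nested chain of sub-simplices, the paper builds the compatible retractions \emph{top-down}: with $\Delta_{0,j}\subseteq\Delta_{0,j+1}\subseteq\cdots\subseteq\Delta_{0,r}$ the faces of $\Delta_0$ determined by the successive filtration levels, it first retracts $v_0\cdot\Delta_{0,r}$ onto $v_0*(\Sigma_{\Delta_{0,r}}\cup\Delta_{0,r-1})$ by eliminating only those faces of $\Sigma_{E^r,\Delta_{0,r}}^c$ not already in $\Delta_{0,r-1}$ (the retraction being the identity on $\Delta_{0,r-1}$), then retracts $v_0*\Delta_{0,r-1}$ onto $v_0*(\Sigma_{\Delta_{0,r-1}}\cup\Delta_{0,r-2})$, and so on down to level $j$. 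Each stage is the identity on all lower levels until those are reached, which gives the filtered compatibility you were worried about without having to verify a segment-nesting condition directly.
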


\begin{proof} This is pretty much identical to the proof of the previous Lemma.
Consider the smallest possible intersection component $E_0$ of $E=E^r$.  Assume $E_0$ is a an intersection component of $E^j$ but is not an intersection component of $E^i$ for $i<j$.
For any $i\geq j$ consider the maximal face $\Delta_{0,i}\in \Sigma_{E^i}$ corresponding to $E_0$. In particular  $E(\Delta_{0,i})=E_0$ for $i\geq j$, and $$\Delta_{0,j}\subseteq \Delta_{0,j+1}\subseteq \ldots\subseteq \Delta_{0,k}$$
are face inclusions.
Let $v_0\in \Delta_{0,j}$ be its barycentre.
Observe that  for $i<j$ the blow-up of $C=E_0$ does not change $\Sigma_{E^i}$. 

Let $i\geq j$.  Consider the complex  $\Sigma_{E^i,\Delta_{0,i}}$ consisting of  all faces $\Delta$ of $\Sigma_{E^i}$ such that the corresponding intersection components $E(\Delta)$ are not contained in $E(\Delta_0)$.  As before the simplices in $\Sigma_{E^i,\Delta_{0,i}}^c:=\Sigma_{E^i}\setminus \Sigma_{E^i,\Delta_{0,i}}$ are the faces of $\Delta_{0,i}\in \Sigma_{E^i}$ with the property $E(\Delta)=E(\Delta_{0,i})$. 
It follows that $$\Sigma_{E^{i+1},\Delta_{0,i+1}}^c\setminus \Sigma_{E^i,\Delta_{0,i}}^c\subset \Delta_{0,i+1}\setminus \Delta_{0,i}.$$

The complexes  $\Sigma_{E^i,\Delta_{0,i}}$ will remain the same after blow-up at $E_0$.
The components defined by the faces in $\Sigma_{E^i,\Delta_{0,i}}^c$ will disappear.
The simplices of  $\Sigma_{\Delta_{0,i}}=\Sigma_{E^i,\Delta_{0,i}}\cap \Delta_{0,i}$ will be joined with a new vertex $v_0$ corresponding to the exceptional divisor $D_0$.
 They will form a complex $v_0*\Sigma_{\Delta_{0,i}}$. 

Thus for $i\geq j$ the new complex has the form 
$\Sigma_{E'^i}=\Sigma_{E^i,\Delta_{0,i}}\cup \quad v_0* \Sigma_{\Delta_{0,i}}$. It  is a subcomplex  of $v_0\cdot\Sigma_{E'^i}$ with deformation retraction defined on

$$v_0* \Delta_{0,j}\subseteq v_0\cdot\Delta_{0,j+1}\subseteq \ldots\subseteq v_0\cdot\Delta_{0,r}$$
to

$$v_0*\Sigma_{\Delta_{0,j}}\subseteq v_0*\Sigma_{\Delta_{0,j+1}}\subseteq \ldots\subseteq v_0*\Sigma_{\Delta_{0,r}},$$

The construction of the deformation retraction is almost identical as before.
Note that $\Sigma_{\Delta_{0,i}}\cap \Delta_{0, i'}=\Sigma_{\Delta_{0,i'}}$ for $i\geq i'\geq j$.
First we define the retraction $$v_0\cdot\Delta_{0,r}\to v_0*(\Sigma_{\Delta_{0,r}}\cup \Delta_{0,r-1}),$$ \noindent eliminating all the faces in $\Sigma_{E^r,\Delta_{0,r}}^c\setminus \Delta_{0,r-1}$, and  identical on $\Delta_{r-1}$. (We use the same technique as in the proof of the previous Lemma.)

Then we retract  $$v_0*\Delta_{0,r-1}\to v_0*(\Sigma_{\Delta_{0,r-1}}\cup \Delta_{0,r-2}).$$ It defines retraction  $$v_0*(\Sigma_{\Delta_{0,r}}\cup \Delta_{0,r-1}) \to v_0*(\Sigma_{\Delta_{0,r}}\cup \Delta_{0,r-2}),$$
and  $$v_0*\Delta_{0,r} \to v_0*(\Sigma_{\Delta_{0,r}}\cup \Delta_{0,r-2}),$$

We continue this process down for  $i=r,r-1,\ldots, j$ to get desired compatible retractions 
$v_0*\Delta_{0,i}\to v_0*(\Sigma_{\Delta_{0,i}})$.

Finally   extending the above retractions  by the identity yields  the retraction of 
 $$\Sigma_{E^1}\subset \Sigma_{E^2}\subset\ldots\subset \Sigma_{E^r}$$
to
$$\Sigma_{E'^1}\subset \Sigma_{E'^2}\subset\ldots\subset \Sigma_{E'^r}.$$
 The rest of the proof is the same.
\end{proof}

\begin{cor}\label{11}Let $E=X\setminus U $ (respectively  $E'= X'\setminus U' $ ) be  a union  of the closed SNC components 
$E=\bigcup E_i$  (resp. $E'=\bigcup E'_i$) on  a nonsingular variety $X$ (resp.  $X'$). Let
$E^1\subset E^2\subset\ldots\subset E^r=E$ be the union of some components $E_i$, 
(resp.  $E'^1\subset E'^2\subset\ldots\subset E'^r=E'$ be the union of some components $E'_i$.)

Assume that there exists a proper birational map $\phi: X\dashrightarrow X'$ such that the restrictions
$\phi_{X\setminus E^j}: X\setminus E^j\dashrightarrow X'\setminus E'^j$ are proper for $j=1,\ldots, k$.

Then the corresponding sequences of topological spaces $\Sigma_{E^1}\subset\Sigma_{E^2}\subset\ldots \subset \Sigma_{E^r}$ and $\Sigma_{E'^1}\subset\Sigma_{E'^2}\subset\ldots \subset \Sigma_{E'^r}$  are homotopically equivalent.
\end{cor}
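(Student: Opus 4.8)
The plan is to reduce everything to the case of nested families of SNC \emph{divisors} already settled in Theorem~\ref{9}, using Corollary~\ref{12} to replace each of $X$, $X'$ by a divisorial model. As in the proof of Theorem~\ref{10}, we may compactify and assume that $X$ and $X'$ are smooth and complete.

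First I would apply Corollary~\ref{12} to $X$ together with the filtration $E^1\subset E^2\subset\cdots\subset E^r=E$. This produces a composition of blow-ups $\rho\colon\tilde X\to X$, whose centres are intersections of strict transforms of maximal components of $E$, such that each total transform $D^j:=\rho^{-1}(E^j)$ is an SNC divisor on $\tilde X$ and the sequence $\Sigma_{D^1}\subset\cdots\subset\Sigma_{D^r}$ is homotopy equivalent, compatibly with the filtrations, to $\Sigma_{E^1}\subset\cdots\subset\Sigma_{E^r}$. Doing the same on the other side gives $\rho'\colon\tilde X'\to X'$, SNC divisors $D'^j:=\rho'^{-1}(E'^j)$, and a filtered homotopy equivalence $\Sigma_{D'^\bullet}\simeq\Sigma_{E'^\bullet}$. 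Since every centre of $\rho$ lies in $E=E^r$, the morphism $\rho$ is an isomorphism over $X\setminus E^r$, and more to the point $\tilde X\setminus D^j=\rho^{-1}(X\setminus E^j)$ for every $j$; hence $\rho$ restricts to a proper morphism $\tilde X\setminus D^j\to X\setminus E^j$, and likewise $\rho'$ restricts to a proper morphism $\tilde X'\setminus D'^j\to X'\setminus E'^j$.

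Next I would form $\tilde\phi\colon\tilde X\dashrightarrow\tilde X'$ as the composite $\rho'^{-1}\circ\phi\circ\rho$; this is a birational map of smooth complete varieties, and the nested families $D^1\subset\cdots\subset D^r$ and $D'^1\subset\cdots\subset D'^r$ of SNC divisors carry over from Corollary~\ref{12}. For each $j$, the restriction $\tilde X\setminus D^j\dashrightarrow\tilde X'\setminus D'^j$ of $\tilde\phi$ factors as the proper morphism $\tilde X\setminus D^j\to X\setminus E^j$, followed by the proper birational map $X\setminus E^j\dashrightarrow X'\setminus E'^j$ supplied by the hypothesis, followed by the proper birational inverse of the proper morphism $\tilde X'\setminus D'^j\to X'\setminus E'^j$; a composite of proper birational maps of this kind is again proper birational. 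Thus $\tilde\phi$, with the data $D^\bullet$, $D'^\bullet$, satisfies the hypotheses of Theorem~\ref{9}, which yields a filtered homotopy equivalence of $\Sigma_{D^1}\subset\cdots\subset\Sigma_{D^r}$ with $\Sigma_{D'^1}\subset\cdots\subset\Sigma_{D'^r}$.

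Concatenating the three filtered homotopy equivalences
$$\Sigma_{E^\bullet}\ \simeq\ \Sigma_{D^\bullet}\ \simeq\ \Sigma_{D'^\bullet}\ \simeq\ \Sigma_{E'^\bullet}$$
then gives the desired equivalence of the two sequences $\Sigma_{E^1}\subset\cdots\subset\Sigma_{E^r}$ and $\Sigma_{E'^1}\subset\cdots\subset\Sigma_{E'^r}$. I expect the only delicate point — and hence the main, if modest, obstacle — to be the simultaneous properness of $\tilde\phi$ on all the complements $\tilde X\setminus D^j$: one must use that $D^j$ is genuinely the \emph{total} transform $\rho^{-1}(E^j)$, which is exactly what Corollary~\ref{12} delivers and which gives the set-theoretic identity $\tilde X\setminus D^j=\rho^{-1}(X\setminus E^j)$, together with the (standard) stability of properness under composing proper morphisms with a proper birational map. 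Everything else is a formal chaining of results already established.
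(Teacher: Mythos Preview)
Your proposal is correct and follows exactly the route the paper indicates: reduce to divisors via Corollary~\ref{12} on each side, then invoke Theorem~\ref{9} on the resulting birational map, and concatenate the filtered homotopy equivalences. The paper's own proof is the single line ``Follows from Corollary~\ref{12} and Theorem~\ref{9}'', so you have simply unpacked what that sentence means, including the properness check for $\tilde\phi$ on each complement.
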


\begin{proof} Follows from the Corollary \ref{12} and Theorem \ref{9}.
\end{proof}

\end{document}